\numberwithin{equation}{section}
\numberwithin{figure}{section}
\def\expo_#1{{\rm e}^{#1}}
\def\R{{\mathbb R}}
\def\C{{\mathbb C}}
\def\1{1\!{\rm l}}
\def\build#1_#2^#3{\mathrel{\mathop{\kern 0pt#1}\limits_{#2}^{#3}}}
\def\td_#1,#2{\mathrel{\mathop{\build\longrightarrow_{#1\rightarrow #2}^{}}}}
\def \mod{\mathrm{mod}}
\newcommand{\ben}{\begin{equation}}
\newcommand{\een}{\end{equation}}
\newcommand{\beno}{\begin{eqnarray*}}
\newcommand{\eeno}{\end{eqnarray*}}
\newtheorem{theorem}{Theorem}
\newtheorem{corollary}{Corollary}
\newtheorem{proposition}{Proposition}
\newtheorem{lemma}{Lemma}
\newtheorem{remark}{Remark}
\date{}
\title{BGS}
\newcounter{rea}
\newcounter{reb}
\newcounter{res}
\title[Stein's Theorem and weights]{Stein's Theorem  in the upper-half plane and Bergman spaces with weights}
\author{Aline Bonami}
\address{ Institut Denis Poisson, D\'epartement de Math\'ematiques, Universit\'e d'Orl\'eans, 45067Orl\'eans Cedex 2, France}
\email{aline.bonami@gmail.com}
\author{Sandrine Grellier}
\address{ Institut Denis Poisson, D\'epartement de Math\'ematiques, Universit\'e d'Orl\'eans, 45067
Orl\'eans Cedex 2, France}
\email{sandrine grellier@univ-orleans.fr}
\author {Beno\^it Sehba}
\address{Department of Mathematics, University of Ghana, PO. Box LG 62 Legon, Accra, Ghana }
\email{ bfsehba@ug.edu.gh}
\begin{document}
\maketitle

\begin{abstract}
  This is a companion paper to our previous one, {\sl  Avatars of Stein's Theorem in the complex setting.} In this previous paper, we gave a sufficient condition for an integrable function in the upper-half plane to have an integrable Bergman projection. Here we push forward methods and establish in particular a converse statement. This  naturally leads us to study a family of weighted Bergman spaces for logarithmic weights $(1+\ln_+(1/\Im m (z))+\ln_+(|z|))^k,$ which have the same kind of behavior respectively at the boundary and at infinity. We introduce their duals, which are logarithmic Bloch type spaces and interest ourselves in multipliers, pointwise products and Hankel operators. 
  
\end{abstract}

\let\thefootnote\relax\footnote{{\sl Keywords:} Bergman spaces, Bergman projection, Bloch spaces, factorization, weak factorization. {\sl Classification: 30C40, 30H20, 30H30. }}
\section{Introduction}
{\em This is a corrected version of our paper \cite{BGS3}, which was unfortunately sent to print with forgotten modifications in Section 4.2.}

This article revisits   previous work of the authors, essentially \cite{BGS1} but also \cite{BGS}. It may serve partly as a survey but contains also  new information as well as some extensions of the results given there. It also serves as an erratum for a missing condition in the study of Hankel operators.

Let us first recall the main results of these two papers.

In \cite{BGS}, we considered an analog for the whole space of Stein's Theorem (\cite{stein}), which says  that, whenever $f$ is an $L^1(\R^n)$  non negative function, which is supported in a ball $B$, its Hardy-Littlewood maximal function $Mf$ is  integrable on $B$ if and only if $f\ln_+(f)$ is integrable. We proved the following, for which \cite{Sola} was also a source of inspiration:
\begin{theorem} \label{main-proc}
A function $f\in L^1(\R^n)$ is in $H^1(\R^n)$ when it satisfies the two conditions:
\begin{equation}\label{mean}
\int_{\R^n} f dx =0,
\end{equation}
\begin{equation}\label{log}
\int_{\R^n} |f|(\ln_+(|f|)+\ln_+(|x|)) dx <\infty.
\end{equation}
Conversely, let $f\in L^1(\R^n)$ be a  function which is  non negative outside a compact set and such that
$$\int|f_-(x)| \ln_+(|f_-(x)|) dx<\infty.$$ 
 If $f$ belongs to $H^1(\R^n)$, 
 then $f$ satisfies Condition \eqref{mean} and \eqref{log}.
 \end{theorem}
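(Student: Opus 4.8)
The plan is to treat the two implications separately, since they call on Stein's Theorem (\cite{stein}) in its two opposite directions. Throughout I fix a suitable nonnegative Schwartz bump $\f$ with $\int\f=1$, and I write $M_\f g(x)=\sup_{t>0}|g*\f_t(x)|$ for the associated maximal function and $Mg$ for the Hardy--Littlewood maximal function. I use the maximal characterization $g\in H^1(\R^n)\iff M_\f g\in L^1(\R^n)$, together with the elementary comparison $M_\f g\approx Mg$, valid for $g\ge 0$ and for such a $\f$. Note that Condition \eqref{mean} is automatic in the necessity part, since every element of $H^1(\R^n)$ has vanishing integral.

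For the sufficiency, I would decompose $f$ along the dyadic annuli $A_k=\{2^k\le|x|<2^{k+1}\}$, $k\ge 0$, together with the unit ball. Writing $m_k=\int_{A_k}f$ and fixing a normalized bump $b_k$ adapted to $A_k$, I set $h_k=f\mathbf{1}_{A_k}-m_k b_k$, so that each $h_k$ has mean zero and is supported in a ball of radius $\sim 2^k$. The leftover masses $\sum_k m_k b_k$, which sum to zero thanks to \eqref{mean}, I reorganise by Abel summation into a telescoping series $\sum_k S_k(b_k-b_{k+1})$ with $S_k=\sum_{j\le k}m_j$, each difference $b_k-b_{k+1}$ being a bounded multiple of an $H^1$ atom. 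For the mean-zero pieces I would invoke the local form of Stein's Theorem, namely $\|h\|_{H^1}\lesssim \int_B|h|\ln_+|h|\,dx+\|h\|_{L^1}$ for $h$ supported in a ball $B$ with $\int h=0$: on $2B$ one has $M_\f h\lesssim Mh$ and Stein's Theorem bounds $\int_{2B}Mh$, while outside $2B$ the cancellation $\int h=0$ forces integrable decay of $M_\f h$. Since $\|\cdot\|_{H^1}$, $\|\cdot\|_{L^1}$ and the $L\ln L$ functional all scale identically under dilation, this estimate holds uniformly over the scales $2^k$. Summing gives $\sum_k\|h_k\|_{H^1}\lesssim\int|f|\ln_+|f|\,dx+\|f\|_{L^1}$, while the telescoping part contributes $\sum_k|S_k|\lesssim\sum_k k|m_k|\lesssim\int|f|\ln_+|x|\,dx$; the factor $\ln_+|x|\approx k$ on $A_k$ is precisely the cost of correcting the masses across scales. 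Adding the two contributions yields $f\in H^1(\R^n)$.

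For the necessity, the condition involving $\ln_+|x|$ I would obtain by duality. Since $\ln_+|x|\in\mathrm{BMO}(\R^n)$ and its truncations $b_N=\min(\ln_+|x|,N)$ have uniformly bounded $\mathrm{BMO}$ norm, the $H^1$--$\mathrm{BMO}$ pairing gives $\left|\int f\,b_N\,dx\right|\lesssim\|f\|_{H^1}$ uniformly in $N$. Splitting $f=f_+-f_-$ and letting $N\to\infty$, monotone convergence handles $\int f_+b_N\uparrow\int f_+\ln_+|x|\,dx$, whereas $\int f_-b_N\to\int f_-\ln_+|x|\,dx$, which is finite because $f_-$ is compactly supported. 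Here the hypothesis that $f$ is non-negative outside a compact set is essential: it removes all cancellation at infinity, so that finiteness of the signed pairing forces the genuine integral $\int|f|\ln_+|x|\,dx$ to be finite.

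The remaining and most delicate point is the necessity of $\int|f|\ln_+|f|\,dx<\infty$, which is where the converse direction of Stein's Theorem ($\int_B Mg<\infty\Rightarrow\int_B g\ln_+g<\infty$) enters. I would reduce to the positive part: from $f_+=f+f_-$ and subadditivity of $M_\f$ one gets $M_\f f_+\le M_\f f+M_\f f_-$, hence, using $M_\f f_+\approx Mf_+$ and $M_\f f_-\approx Mf_-$, the pointwise bound $Mf_+\lesssim M_\f f+Mf_-$. The first term lies in $L^1$ because $f\in H^1$, and the second is locally integrable by the forward direction of Stein's Theorem applied to the hypothesis $f_-\in L\ln L$. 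Thus $Mf_+$ is integrable on every compact set, and on $\{|x|>2R_0\}$, where $f=f_+$ and $\operatorname{supp}f_-\subset B(0,R_0)$, the contamination by $f_-$ is an integrable tail; the converse direction of Stein's Theorem then yields $\int f_+\ln_+ f_+\,dx<\infty$, and together with $f_-\in L\ln L$ this gives \eqref{log}. The main obstacle I anticipate is precisely this controlled passage from $M_\f f\in L^1$ back to a genuine Hardy--Littlewood maximal function to which Stein's converse applies: one must quarantine the negative part $f_-$ (whence the hypothesis on $f_-$) and manage the interface between the compact region and infinity, since neither piece of $f$ is separately in $H^1(\R^n)$.
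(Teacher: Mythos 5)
Your necessity argument for the $L\ln L$ half of \eqref{log} has a genuine gap. The assertion that on $\{|x|>2R_0\}$ ``the contamination by $f_-$ is an integrable tail'' is false: for $|x|\ge 2R_0$ one has $Mf_-(x)\ge c\|f_-\|_{L^1}|x|^{-n}$ (average over $B(x,2|x|)$, which contains $\operatorname{supp}f_-$), and $|x|^{-n}$ is not integrable at infinity; for the same reason $Mf_+$ is never globally integrable unless $f_+\equiv 0$. So your pointwise bound $Mf_+\lesssim M_\f f+Mf_-$ only yields \emph{local} integrability of $Mf_+$, and Stein's converse --- a statement about functions supported in a fixed ball --- then only gives $f_+\ln_+f_+\in L^1_{loc}$, not \eqref{log}: running the quantitative converse on balls $B_R$ produces a bound of the form $\|f\|_{H^1}+C+\|f_-\|_{L^1}\ln R$, which diverges as $R\to\infty$. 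Closing this requires genuinely quarantining $f_-$, e.g.\ taking $\f$ compactly supported, covering $\{|x|\gtrsim R_0\}$ by cubes $Q$ of sidelength comparable to their distance to the origin with bounded overlap, and observing that for $x\in Q$ and balls $B$ with $x\in B\subset Q$ the average of $f_+$ over $B$ equals an average of $f$ whose associated dilated bump never meets $\operatorname{supp}f_-$, hence is $\lesssim M_\f f(x)$ with no $Mf_-$ term; the localized Stein converse on each $Q$ (which involves $\ln_+\!\left(f_+/(f_+)_Q\right)$, the average being harmless since $(f_+)_Q\le\|f\|_{L^1}$ on such cubes) then sums to $\lesssim\|f\|_{H^1}$. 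You flagged exactly this as the main obstacle, but you did not overcome it. The paper sidesteps all of it: writing $f=\left(f_+-\big(\int f_+\big)\chi_B\right)-\left(f_--\big(\int f_-\big)\chi_B\right)$, the second bracket is compactly supported, mean zero and in $L\ln L$, hence in $H^1$ by the direct part, and the ``if and only if'' statement of \cite{BGS} applied to the first bracket (a nonnegative function minus a bump) gives the converse in one stroke.

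The sufficiency half also rests on a false lemma. The ``local form of Stein's Theorem'' you invoke, $\|h\|_{H^1}\lesssim\int_B|h|\ln_+|h|\,dx+\|h\|_{L^1}$ for mean-zero $h$ supported in $B$, fails: take $h=\chi_E-\chi_{E'}$ with $E,E'\subset B$ disjoint sets of measure $\delta$ at mutual distance $\simeq 1$; then $\ln_+|h|\equiv0$ and $\|h\|_{L^1}=2\delta$, while $M_\f h(x)\gtrsim \delta/|x-x_E|^{n}$ on the region $\delta^{1/n}\lesssim|x-x_E|\lesssim 1$, so $\|h\|_{H^1}\gtrsim\delta\ln(1/\delta)$. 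The correct local estimate carries the additional term $\|h\|_{L^1}\ln_+\!\left(|B|/\|h\|_{L^1}\right)$ (the local average must enter the $L\ln L$ functional), and this term is exactly what your dilation-invariance remark misses. Your scheme is repairable: on the annulus $A_k$ the extra term is $\lesssim\|h_k\|_{L^1}\left(kn+\ln_+(1/\|h_k\|_{L^1})\right)$; the first part is paid by $\int|f|\ln_+|x|\,dx$, and the second is summable since $t\ln_+(1/t)\lesssim kt+2^{-k/2}$. But as written the key estimate is wrong, and it is precisely the delicate point that makes the weight $\ln_+|x|$ appear in \eqref{log}. (The paper does not reprove sufficiency at all; it quotes \cite{BGS}.) One positive point: your duality proof of the $\ln_+|x|$ part of the necessity --- uniformly bounded BMO truncations of $\ln_+|x|$ paired against $f$, with monotone convergence on $f_+$ --- is correct and self-contained, and is genuinely different from the paper's reduction to \cite{BGS}.
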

Of course it is not possible for a non negative function $f$ to be itself in $H^1(\R^n)$ since all functions in $H^1(\R^n)$ have integral $0$.  The first part of this statement may be found directly in \cite{BGS}, while the converse part looks more general.  In fact, with the assumptions given above, we can write, assuming that $f$ has integral $0,$ that
$$ f=\left(f_+ -\big(\int f_+ dx\big) \chi_B \right)-\left(f_- -\big(\int f_- dx\big)\chi_B \right),$$ with $B$ the unit ball centered at $0$. The second term is in $H^1$ since it has integral $0$, is in $L\log L$ and is compactly supported (just use the first statement of the theorem). The statement given in \cite{BGS} expresses that the first one is in $H^1$ if and only if $f_+$ satisfies \eqref{log}. This concludes the proof of this new formulation.

In \cite{BGS1} we consider an analogous situation, but in the upper-half-space 
$$\mathbb C_+:=\{z\in\C, \; \Im m(z)>0\}$$
endowed with the Lebesgue measure noted
$$dV(z)=dx dy\quad  z=x+iy.$$
We denote by $\mathcal H:=\mathcal H(\C_+)$ the space of holomorphic functions in $\C_+$. We are now  interested in the Bergman projection, that is, the orthogonal projection from $L^2(\C_+)$ onto its subspace of holomorphic functions $A^2(\C_+)$. More generally we define
$$A^p(\C_+):=L^p(\C_+)\cap \mathcal H.$$
We denote by $P$ the Bergman projection. Its kernel, called the Bergman kernel, is given by $$K(z,\zeta)=\frac 1{\pi(z- \overline\zeta)^2}.$$ We denote by $P^+$ the operator with kernel $|K(z, \zeta)|.$ The following statement plays the role of Theorem  \ref{main-proc} in this context. Here the Riesz transforms, that characterize $H^1(\R^n)$, are replaced by the Bergman projection. 

The statement is an improvement of the one of \cite{BGS1}.
\begin{theorem} \label{Bergman}
The Bergman projection of a function $f\in L^1(\C_+)$ (respectively $P^+f$)  is an integrable function  if  it satisfies the two conditions:
\begin{equation}\label{meanB}
\int_{\C_+} f dV(z) =0,
\end{equation}
\begin{equation}\label{logB}
\int_{\C_+} |f(z)|(\ln_+((\Im m z)^{-1})+\ln_+(|z|)) dV(z) <\infty.
\end{equation}
Conversely, let $f\in L^1(\C_+)$ be a non negative function outside a compact set of $\C_+$.  When
 $P^+f$ belongs to $L^1(\C_+)$, then   $f$ satisfies Conditions \eqref{meanB} and \eqref{logB}.
\end{theorem}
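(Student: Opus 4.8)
The plan is to take the direct implication from \cite{BGS1} and to concentrate on the converse, extracting its three conclusions one at a time. I write $f=f_+-f_-$ with $f_-\ge 0$ supported in the compact set where $f$ may be negative, and I test $P^+f$ against bounded $g$ through the inequality $\big|\int_{\C_+}g\,P^+f\,dV\big|\le\|g\|_\infty\|P^+f\|_1$, using the positive kernel $|K(z,\zeta)|=\big(\pi((\Re z-\Re\zeta)^2+(\Im m(z)+\Im m(\zeta))^2)\big)^{-1}$. First I would obtain the boundary weight by choosing $g=\1_{\{\Im m(z)<1\}}$: a one–line computation gives the adjoint $(P^+)^*g(\zeta)=\int_{\Im m(z)<1}|K(z,\zeta)|\,dV(z)=\ln\frac{1+\Im m(\zeta)}{\Im m(\zeta)}\ge\ln_+\big(\Im m(\zeta)^{-1}\big)$. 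Since this weight is bounded on the compact support of $f_-$, Fubini (legitimate here, the kernel not yet being truncated at infinity) yields $\int f_+\ln_+(\Im m(\zeta)^{-1})\,dV\le\|P^+f\|_1+C\|f_-\|_1<\infty$. The mean value is not used at this stage.

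Next the condition $\int f\,dV=0$. Integrating $P^+f$ over the dyadic half–annuli $A_R=\{R<|z|<2R,\ \Im m(z)>0\}$, one has for fixed $\zeta$ that $\int_{A_R}|K(z,\zeta)|\,dV(z)\to\int_{A_R}\frac{dV(z)}{\pi|z|^2}=\ln 2$, while $\int_{A_R}|K(z,\zeta)|\,dV(z)\le C\big(1+\ln_+(\Im m(\zeta)^{-1})\big)$ uniformly in $R$. The boundary integrability just established supplies the dominating function, so dominated convergence gives $\int_{A_R}P^+f\,dV\to(\ln 2)\int f\,dV$; as the left-hand side tends to $0$ because $P^+f\in L^1$, we conclude $\int f\,dV=0$. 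Treating the boundary weight before the mean value is precisely what avoids a circular argument.

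The growth weight is where the real difficulty lies. Splitting $\ln_+|\zeta|\lesssim\ln_+(\Im m(\zeta))+\ln_+|\Re\zeta|+1$, I argue again by duality, now \emph{using} $\int f=0$ to regularize: for $g\in L^\infty$ put $\Phi_g(\zeta)=\int_{\C_+}g(z)\big(|K(z,\zeta)|-|K(z,i)|\big)\,dV(z)$, which is finite because $|K(\cdot,\zeta)|-|K(\cdot,i)|\in L^1(dV)$, and observe that $\int f\,\Phi_g=\int g\,P^+f$. The height part is easy: $g=\1_{\{\Im m(z)<1\}}-1$ gives $\Phi_g(\zeta)=\ln\frac{1+\Im m(\zeta)}{2}\gtrsim\ln_+(\Im m(\zeta))$, bounded below, whence $\int f_+\ln_+(\Im m(\zeta))<\infty$ exactly as before. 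The horizontal part carries the genuine obstruction: because $\int_{\C_+}|K(z,\zeta)|\,dV(z)=\int_0^\infty\frac{dy}{y+\Im m(\zeta)}$ depends only on $\Im m(\zeta)$, the factor $\ln_+|\Re\zeta|$ \emph{cannot} be produced by any region integral of the positive kernel; it is visible only through the cancellation carried by $\int f=0$, that is, through the $L^1(dV)$–mass of $|K(\cdot,\zeta)|-|K(\cdot,i)|$, which is $\approx\ln_+|\zeta|$ and sits in the two bumps of this difference, centred at $\Re\zeta$ and at $0$ over heights up to $\approx|\Re\zeta|$.

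My device to reach $\ln_+|\Re\zeta|$ is the even angular test $g=\1_{\{|\Re z|>\Im m(z)\}}$, whose regularized adjoint truncates the first bump at height $\approx|\Re\zeta|$ and annihilates the second, so that $\Phi_g(\zeta)\approx\ln\frac{|\Re\zeta|}{\Im m(\zeta)}$, symmetric in $\operatorname{sign}(\Re\zeta)$; an even test is essential, since any odd one such as $\operatorname{sgn}(\Re z)$ only controls the difference $\int_{\Re\zeta>0}f\ln|\Re\zeta|-\int_{\Re\zeta<0}f\ln|\Re\zeta|$. The crux computation is the lower bound $\Phi_g(\zeta)\ge c\ln_+|\Re\zeta|-C\big(1+\ln_+(\Im m(\zeta))+\ln_+(\Im m(\zeta)^{-1})\big)$; granting it, and bounding $\int f_-\Phi_g$ by compactness, one obtains $\int f_+\ln_+|\Re\zeta|<\infty$ from the height and boundary conclusions. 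The only remaining technical point is to justify the identity $\int f\,\Phi_g=\int g\,P^+f$ without assuming the very integrability one is proving, which I would handle by truncating $g$ to $g\1_{\{|z|<n\}}$, proving the estimate with a lower bound uniform in $n$, and passing to the limit by Fatou.
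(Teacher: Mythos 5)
Your step 1 is correct (it is essentially the paper's Lemma \ref{boundary}, run with the positive kernel and Tonelli, which legitimately avoids needing the mean value first). The genuine gap is in your step 2. The claimed uniform bound $\int_{A_R}|K(z,\zeta)|\,dV(z)\le C\bigl(1+\ln_+(\Im m(\zeta)^{-1})\bigr)$ is false: take $\zeta=\tfrac{3R}{2}+i$, so the right-hand side is a constant, while $A_R$ contains the triangle $\{|x-\tfrac{3R}{2}|<y,\ 0<y<R/4\}$, on which $|z-\overline\zeta|^2\le 2(y+1)^2$, whence $\int_{A_R}|K(z,\zeta)|\,dV(z)\gtrsim\int_0^{R/4}\frac{y\,dy}{(y+1)^2}\simeq\ln R$. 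In fact the correct $R$-uniform majorant of $\int_{A_R}|K(\cdot,\zeta)|\,dV$ is of size $\omega(\zeta)\simeq 1+\ln_+|\zeta|+\ln_+(\Im m(\zeta)^{-1})$, so dominating it against $|f|$ requires precisely \eqref{logB} --- the very condition being proved, and whose proof (your step 3) uses the mean-value conclusion of step 2. As written the scheme is therefore circular and dominated convergence cannot be invoked. The repair is to keep $z$ away from the real axis, which is exactly what the paper's Proposition \ref{zeroP} does: on the cone $\{|x|<y\}$ one has $|z-\overline\zeta|\ge y+\Im m(\zeta)\ge |z|/\sqrt 2$, hence $|z|^2|K(z,\zeta)|\le 2/\pi$, so $|z|^2P^+f(z)\to\frac1\pi\int f\,dV$ by dominated convergence with majorant $2|f|/\pi$; since $z\mapsto |z|^{-2}$ is not integrable on the cone, $P^+f\in L^1(\C_+)$ forces $\int f\,dV=0$. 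Note this needs no weight at all, so it can (and should) be run before everything else.

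Your step 3 is a coherent duality variant, but its entire technical content is deferred: the crux lower bound $\Phi_g(\zeta)\ge c\ln_+|\Re e\,\zeta|-C\bigl(1+\ln_+(\Im m\,\zeta)+\ln_+(\Im m(\zeta)^{-1})\bigr)$ for $g=\1_{\{|\Re e\, z|>\Im m\, z\}}$ is explicitly ``granted'' rather than proved, and the truncation/Fatou justification needs this bound, and the analogous one for the height test, \emph{uniformly in $n$} (crude bounds like $|\Phi_{g_n}|\lesssim\omega(\zeta)$ are again circular, so genuine cancellation estimates are required at that stage too). Compare with the paper's Lemma \ref{far}, which gets $\ln_+|\zeta|$ in one stroke with no splitting into horizontal and vertical parts: integrate $P^+_{\mod}f$ over the cone $F=\{|x|\le y,\ y>1\}$ only, and split the $\zeta$-integral according to the size of $|z+i|$ relative to $|\zeta-i|$; on $E_1=\{|z+i|<\tfrac14|\zeta-i|\}$ the modified kernel satisfies $K^+_{\mod}(z,\zeta)\le -c|z+i|^{-2}$ (the subtracted term dominates), the two remaining regions contribute $O(\|f\|_1)$, and integrating $|z+i|^{-2}$ over $F\cap\{|z+i|<\tfrac14|\zeta-i|\}$ produces $\ln_+|\zeta-i|$ directly. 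With step 2 replaced by the cone argument and step 3 either completed in detail or replaced by this sign-based region argument, your overall architecture (boundary weight, mean value, growth weight) does yield the theorem.
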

The converse part  is  new. 
We postpone details and proofs to the next section. 
The first idea of the proof is the fact that, since {\bf $f$ has integral $0,$ }
we may subtract $K(z, i)$ to the Bergman kernel $K(z, \zeta)$ and write
\begin{eqnarray*}
Pf(z)=P_{\mod}f(z):=\int_{\C_+} K_{\mod} (z, \zeta) f(\zeta) dV(\zeta), \; & K_{\mod}(z, \zeta) = K(z, \zeta)-K(z, i)\\
P^+f(z)=P_{\mod}^+f(z):=\int_{\C_+} K^+_{\mod} (z, \zeta) f(\zeta) dV(\zeta), \; & K^+_{\mod}(z, \zeta) = 
 |K(z, \zeta)|-|K(z, i)|.
\end{eqnarray*}
These new kernels are integrable in $z$ so that $P_{\mod}$ and $P^+_{\mod}$ are defined on the entire space $L^1(\C_+)$.
Let us emphasize that this modification of the Bergman kernel is classical, see for instance \cite{CoifmanRochberg, SH}.

Remark that while $P$ and $P^+$ satisfy the inequality $|Pf|\leq P^+(|f|),$ there is no inequality between modified operators, and $P^+_{\mod}$ is no more non negative.

One may be surprised to see a difference between the statement in $\R^n$ and this one, with $\ln_+ (|f|)$ replaced by $\ln_+((\Im m z)^{-1})$ in the upper-half plane. Indeed, the Bergman projection is a singular integral, like Riesz transforms in $\R^n$. This is clarified by the following proposition, whose proof is given in \cite{BGS1}.

\begin{proposition}
Whenever  $f\in L^1(\C_+)$ is such that
\begin{equation}\label{logWeak} \int_{\C_+} |f(z)|(1+\ln_+(|f(z)|)+\ln_+(|z|))dV(z)<\infty,\end{equation}
then 
$f$ satisfies Condition \eqref{logB}.
\end{proposition}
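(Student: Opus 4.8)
The plan is to reduce the claim to a single estimate and then split the domain according to the size of $f$. Since the term $\ln_+(|z|)$ occurs identically in \eqref{logWeak} and in \eqref{logB}, and since $\int_{\C_+}|f|\,dV<\infty$, it suffices to prove that
\[
\int_{\C_+}|f(z)|\,\ln_+\big((\Im m\,z)^{-1}\big)\,dV(z)<\infty .
\]
The weight vanishes when $\Im m\,z\ge 1$, so I may restrict to the strip $\{0<\Im m\,z<1\}$, where $\ln_+((\Im m\,z)^{-1})=\ln(1/y)$ with $y=\Im m\,z$.

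The first thing to note is why the obvious approach fails. One is tempted to apply Young's inequality for the conjugate pair associated with $t\ln_+t$, which would dominate $|f|\ln(1/y)$ by $|f|\ln_+|f|$ plus a multiple of $e^{\ln_+(1/y)}=\max(1,1/y)$; but $1/y$ is not integrable on the strip, so this crude splitting cannot work. The correct device is to let the threshold separating ``large'' and ``small'' values of $f$ depend on both $y$ and $|z|$, so that the contribution of the small values is measured against a genuinely integrable weight.

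Concretely, I would fix the threshold $T(z):=\big(\sqrt{y}\,(1+|z|)^2\big)^{-1}$ and split the strip into $\{|f|\ge T\}$ and $\{|f|<T\}$. On the first set, taking logarithms in $|f|\ge T$ gives $\tfrac12\ln(1/y)\le \ln_+|f|+2\ln(1+|z|)$, whence $\ln(1/y)\le 2\ln_+|f|+4\ln_+|z|+C$; multiplying by $|f|$ and integrating, this part is controlled by $\int|f|(\ln_+|f|+\ln_+|z|+1)\,dV$, which is finite by \eqref{logWeak}. On the second set I simply bound $|f|<T$, so the integrand is at most $\frac{\ln(1/y)}{\sqrt{y}\,(1+|z|)^2}\le \frac{\ln(1/y)}{\sqrt y\,(1+|x|)^2}$ (using $|z|\ge|x|$), and the resulting integral factorizes as $\big(\int_0^1 y^{-1/2}\ln(1/y)\,dy\big)\big(\int_{\R}(1+|x|)^{-2}\,dx\big)<\infty$.

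The main point --- and the only genuinely conceptual step --- is the choice of the $z$-dependent threshold $T$. The power $y^{-1/2}$ is what makes $y^{-1/2}\ln(1/y)$ integrable near the boundary and simultaneously lets the logarithmic comparison on the large set absorb $\ln(1/y)$ into $\ln_+|f|$; the factor $(1+|z|)^2$ tames the spreading of the mass of $f$ in the $x$-direction and is exactly where the hypothesis on $\ln_+|z|$ is used. It is instructive that \emph{both} terms $\ln_+|f|$ and $\ln_+|z|$ are needed: dropping either one allows the small-value contribution to diverge (mass concentrating near the axis, respectively mass spreading to infinity), which fits the decomposition $\ln(1/y)=\ln(1/|z|)+\ln(1/\sin(\arg z))$. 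Everything else is routine one-variable integration.
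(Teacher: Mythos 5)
Your proof is correct. Note that the paper itself does not reproduce a proof of this proposition---it defers entirely to \cite{BGS1}---so your argument stands as a self-contained substitute rather than something to be matched line by line. The reduction to $\int_{\{0<\Im m\,z<1\}}|f|\ln(1/y)\,dV<\infty$ is right, and the two-variable threshold $T(z)=\bigl(\sqrt{y}\,(1+|z|)^{2}\bigr)^{-1}$ does exactly the needed work: on $\{|f|\ge T\}$ the comparison $\ln(1/y)\le 2\ln_+|f|+4\ln_+|z|+C$ is valid (using $\ln(1+|z|)\le \ln 2+\ln_+|z|$), and on $\{|f|<T\}$ the majorant $y^{-1/2}(1+|x|)^{-2}\ln(1/y)$ is indeed integrable over the strip, so both pieces are finite. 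Your argument is, in substance, the standard $L\log L$ device: splitting at a threshold is equivalent to applying Young's inequality $ab\le a\ln_+a+e^{b}$ with the \emph{modified} exponent $b=\tfrac12\ln(1/y)-2\ln(1+|z|)$ instead of $b=\ln(1/y)$, which repairs precisely the failure you identified (non-integrability of $1/y$ on the strip). This is the argument one expects to find in the cited reference, so I would classify your proposal as correct and essentially the canonical route, with the added merit of making explicit why both logarithmic terms in the hypothesis are needed.
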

There exist functions that satisfy Condition \eqref{logB} but not Condition \eqref{logWeak}: indeed any $L^1$ function that is supported in a compact set of $\C_+$ satisfies  \eqref{logB} but does not in general satisfy \eqref{logWeak}. So one finds a strictly weaker condition in the Bergman situation. Moreover we do not need to appeal to a Musielak Orlicz space as in the case of $\R^n$ to have a critical condition for $Pf$ to be in $L^1(\C_+).$

In the second section we will give the proof of Theorem \ref{Bergman} and related statements.
The third section is at the same time a survey and a generalization to   weighted Bergman spaces $A^1_\Omega:= \mathcal H \cap L^1(\Omega dV).$ We will particularly be interested in weights $\omega^k, $ where 
\begin{equation}
 \label{omega}
\omega(z):=1+\ln_+\left(\frac 1{\Im m(z)}\right)+ \ln_+(|z|),\quad z\in\C_+.\end{equation}
 Our  starting point in \cite{BGS1}  was the fact that, when looking at the product of a function in $A^1(\C_+)$  and a function in the Bloch class, that is, an element of its dual, one obtains a function in $A^1_{\omega^{-1}}(\C_+).$ Remark that one may find a better estimate, see \cite{BGS1} and also \cite{BBT} in the case of the unit ball. Indeed, for $f\in A^1(\C_+)$ and $b\in \mathcal B,$ the product $h=fb$ satisfies the inequality
\begin{equation}\label{prodStrong} \int_{\C_+} \frac{|h(z)|}{1+\ln_+(|h(z)|)+\ln_+(|z|)}dV(z)<\infty,\end{equation}
which is stronger as it can be proved using atomic decomposition of both spaces. We could as well define spaces of functions $f$ such that
$$\int_{\C_+} \frac{|h(z)|}{1+(\ln_+(|h(z)|))^k+(\ln_+(|z|))^k}dV(z)<\infty.$$
But spaces $A^1_{\omega^k}(\C_+)$ have the advantage of being Banach spaces while having the same duals, so that we will limit ourselves to them. 

For $\Omega$ a positive function defined on $\mathbb{C}_+$, the Bloch type space $\mathcal{B}_\Omega(\mathbb{C}_+)$ consists of all holomorphic functions $f$ such that
\begin{equation}\label{semi}
\|f\|_{\dot {\mathcal{B}}_{\Omega}}:=\sup_{z\in \mathbb{C}_+}\left(\Im mz\right)\Omega(z)|f'(z)|<\infty.
\end{equation}
Remark  that $\|f\|_{\dot {\mathcal{B}}_{\omega^k}}$ is only a semi-norm. It is a norm on the quotient space  modulo constants 
\begin{equation}\label{dotB}
    \dot{\mathcal{B}}_{\omega^k}(\mathbb{C}_+):= \mathcal{B}_{\omega^k}(\mathbb{C}_+)/\C.
\end{equation} When we deal with Bloch spaces, we introduce the norm
\begin{equation}\label{normB}
\|f\|_{ {\mathcal{B}}_{\Omega}}:=|f(i)|+\sup_{z\in \mathbb{C}_+}\left(\Im mz\right)\Omega(z)|f'(z)|<\infty.
\end{equation}
The necessity to consider at the same time Bloch functions and equivalence classes of such functions modulo constants is linked to the fact that we will consider dual spaces on one hand and pointwise products on the other hand. In particular, we will consider products of functions respectively  in $A^1_{\omega^k}$ and in the Bloch type space $\mathcal{B}_{\omega^{-k}}(\mathbb{C}_+)$. 

We will characterise pointwise multipliers between the spaces $\mathcal{B}_{\omega^k}$.
We will see that the dual space of  $A^1_{\omega^{-k}}$ is the space of classes modulo constants  
$\mathcal{B}_{\omega^k}(\mathbb{C}_+)/\C$. 

Only $\mathcal{B}_{\omega}(\mathbb{C}_+)$ was introduced in \cite{BGS1} under the name $\mathcal B_{\log}$. The behavior of these spaces depends on the value of $k.$ Our  first elementary observation is the fact that {\sl $\mathcal{B}_{\omega^k}(\mathbb{C}_+)$ is an algebra for $k>1.$} 


We end this work by an {\sl erratum} in the paper \cite{BGS1} concerning the boundedness of Hankel operators on $A^1(\C_+)$.
 Recall that the Hankel operator $h_b$ is defined  by $h_b(f)=P\left(f\Bar{b}\right)$ when this makes sense. Equivalently, for $b$ a Bloch function, $h_b$ is defined as an operator from $A^2(\C_+)$ into itself as the operator satisfying $\langle h_b(f),g\rangle =\langle b,fg\rangle $ for any $f,g\in A^2(\C_+)$. Here $\langle \cdot,\cdot\rangle$ stands for the usual Hermitian inner product on $L^2(\C_+)$.\\
 We want to extend this operator as an operator from $A^1(\C_+)$ to itself. It is natural to consider  quantities such as $\langle b,fg\rangle $ for $f\in A^1(\C_+)$ and $g$ an element of the Bloch space. However, as $g$ is defined in the Bloch space up to a constant, in order to give a meaning to this term, one has to assume that $\langle b,f\rangle =0$. We forgot to add this condition in our previous paper \cite{BGS1}. Our corrected statement is the following.


We define the space $A^1_{b}(\C_+)$ as the subspace of functions $f$ in $A^1(\C_+)$ satisfying $\langle b,f \rangle=0$.
\begin{theorem}\label{hankelexp}
Let $b$ be in $\mathcal H(\C_+)$.  Then the Hankel operator $h_b$ extends to a bounded operator from $A^1_{b}(\mathbb{C}_+)$ to $A^1(\mathbb{C}_+)$ if and only if $b$ belongs to $\mathcal{B}_\omega(\mathbb{C}_+)$.
Moreover, when $f\in A^1(\C_+)$ and $\langle b,f\rangle \neq 0$, $h_b(f)\notin L^1(\C_+)$.
\end{theorem}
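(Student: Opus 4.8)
The plan is to reduce the boundedness of $h_b$ to a duality statement and then feed in the product/factorisation machinery developed earlier. Throughout I use the identity $\langle h_b(f),g\rangle=\langle b,fg\rangle$, the duality $(A^1)^*=\mathcal B/\C$ (the case $k=0$ of the stated dualities, with $\mathcal B=\mathcal B_{\omega^0}$), the product estimate sending $A^1\cdot\mathcal B$ into $A^1_{\omega^{-1}}$, and the duality $(A^1_{\omega^{-1}})^*=\mathcal B_\omega/\C$.

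For the sufficiency, assume $b\in\mathcal B_\omega$ and fix $f\in A^1_b$. Since $(A^1)^*=\mathcal B/\C$, I would estimate $\|h_b(f)\|_{A^1}\simeq\sup|\langle h_b(f),g\rangle|$, the supremum over classes $[g]\in\mathcal B/\C$ with $\|g\|_{\mathcal B}\le 1$. Here the key point, and the reason for the standing hypothesis $\langle b,f\rangle=0$, is that $\langle h_b(f),g\rangle=\langle b,fg\rangle$ descends to classes modulo constants: replacing $g$ by $g+c$ changes it by $\bar c\,\langle b,f\rangle=0$. The product estimate then gives $\|fg\|_{A^1_{\omega^{-1}}}\lesssim\|f\|_{A^1}\|g\|_{\mathcal B}$, and pairing $fg\in A^1_{\omega^{-1}}$ with $b\in\mathcal B_\omega=(A^1_{\omega^{-1}})^*$ yields $|\langle b,fg\rangle|\lesssim\|b\|_{\mathcal B_\omega}\|fg\|_{A^1_{\omega^{-1}}}$. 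Combining the two inequalities gives $\|h_b(f)\|_{A^1}\lesssim\|b\|_{\mathcal B_\omega}\|f\|_{A^1}$, as wanted.

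For the necessity, assume $h_b\colon A^1_b\to A^1$ is bounded. The same duality gives $|\langle b,fg\rangle|=|\langle h_b(f),g\rangle|\lesssim\|f\|_{A^1}\|g\|_{\mathcal B}$ for every $f\in A^1_b$ and $g\in\mathcal B$. To upgrade this from products $fg$ to all of the predual $A^1_{\omega^{-1}}$, and so conclude $b\in(A^1_{\omega^{-1}})^*=\mathcal B_\omega$, I would invoke a weak factorisation: any $h\in A^1_{\omega^{-1}}$ can be written $h=\sum_j f_jg_j$ with $f_j\in A^1$, $g_j\in\mathcal B$ and $\sum_j\|f_j\|_{A^1}\|g_j\|_{\mathcal B}\lesssim\|h\|_{A^1_{\omega^{-1}}}$, whence $|\langle b,h\rangle|\le\sum_j|\langle b,f_jg_j\rangle|\lesssim\|h\|_{A^1_{\omega^{-1}}}$. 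I expect this to be the main obstacle: one must prove the weak factorisation of $A^1_{\omega^{-1}}$ as a summable series of products $A^1\cdot\mathcal B$, and one must reconcile it with the constraint defining $A^1_b$, since both the identity $\langle b,f_jg_j\rangle=\langle h_b(f_j),g_j\rangle$ and the boundedness hypothesis are only available for factors with $\langle b,f_j\rangle=0$. I would handle the latter by fixing a test function $f_*$ with $\langle b,f_*\rangle\neq 0$, splitting $f_j=f_j^0+c_jf_*$ with $f_j^0\in A^1_b$, and absorbing the resulting correction terms into the estimate.

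For the last assertion, take $f\in A^1$ with $\langle b,f\rangle\neq 0$ and set $g:=f\bar b$, so that (assuming as we may that $f\bar b\in L^1$) $\int_{\C_+}g\,dV=\overline{\langle b,f\rangle}\neq 0$. Using the splitting $K=K_{\mod}(\cdot,\cdot)+K(\cdot,i)$ recalled after Theorem~\ref{Bergman}, I would write $h_b(f)=P(g)=P_{\mod}(g)+\overline{\langle b,f\rangle}\,K(\cdot,i)$. A short computation shows $K_{\mod}(z,\zeta)=O(|z|^{-3})$ as $z\to\infty$, so $P_{\mod}(g)$ contributes a term that is integrable at infinity, whereas $K(z,i)=\tfrac1{\pi(z-i)^2}\sim\tfrac1{\pi z^2}$ is not integrable over $\C_+$ (its modulus diverges logarithmically at infinity). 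Hence $h_b(f)(z)\sim\overline{\langle b,f\rangle}/(\pi z^2)$ at infinity, forcing $h_b(f)\notin L^1(\C_+)$; this is precisely the analogue, for the operator $h_b$, of the necessity of the mean-value condition \eqref{meanB} in Theorem~\ref{Bergman}.
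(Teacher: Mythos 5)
Your sufficiency argument is essentially the paper's own: the observation that $\langle b,fg\rangle$ descends to classes of $g$ modulo constants precisely because $\langle b,f\rangle=0$ (the paper normalizes with $g-g(i)$), the product estimate of Corollary \ref{emb-weak} with $l=k=0$, and the duality $(A^1_{\omega^{-1}})^*=\mathcal{B}_\omega$ of Theorem \ref{thm:dualweighted}; apart from the density/approximation care needed to justify the defining identity for the extended operator, that half is sound. The necessity, however, rests on a statement you do not prove and flag yourself as the main obstacle: the weak factorization of $A^1_{\omega^{-1}}(\C_+)$ as a summable series $\sum_j f_jg_j$, $f_j\in A^1$, $g_j\in\mathcal B$, with $\sum_j\|f_j\|_{A^1}\|g_j\|_{\mathcal B}\lesssim\|h\|_{A^1_{\omega^{-1}}}$. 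This is not an off-the-shelf fact; in this paper weak factorization is itself \emph{derived} from the strong factorization of atoms (see the remark after Proposition \ref{prop:factor}), so filling your gap would require precisely the machinery of Lemma \ref{lem: testTheta} and Proposition \ref{prop:factor} plus an atomic decomposition — strictly more than the paper needs. The paper's necessity argument avoids factorizing the whole predual: it tests $b$ against the single atom $f_\zeta(z)=\Im m(\zeta)\,\omega(\zeta)/\big(\pi(z-\overline\zeta)^3\big)$, for which Cauchy's formula gives $\langle b,f_\zeta\rangle=\Im m(\zeta)\,\omega(\zeta)\,b'(\zeta)$ — exactly the quantity to be bounded — and then uses only the strong factorization $f_\zeta=(f_\zeta/\theta)\cdot\theta$ of that one atom, writing $\langle b,f_\zeta\rangle=\langle h_b^{\rm mod}(f_\zeta/\theta),\theta\rangle+\theta(i)\langle b,f_\zeta/\theta\rangle$. (Your device of splitting off a fixed $f_*$ with $\langle b,f_*\rangle\neq0$ is the paper's auxiliary function $u$; that part is fine.) As written, the necessity half of your proof is incomplete.

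The final assertion contains two genuine errors. First, ``assuming as we may that $f\bar b\in L^1$'' is not a legitimate reduction: by Lemma \ref{lem:pointwisebloch}, a function $b\in\mathcal B_\omega$ can grow like $\ln(\omega(z))$, so $f\bar b$ need not be integrable for $f\in A^1$, and $h_b(f)$ then only means the continuous extension — which is exactly why the paper routes the argument through $h_b^{\rm mod}$. Second, the claim that $P_{\rm mod}(g)$ is integrable at infinity for $g\in L^1$ is false: the decay $K_{\rm mod}(z,\zeta)=O(|z|^{-3})$ holds for each fixed $\zeta$ but is not uniform in $\zeta$, and Lemma \ref{ineq-fund} gives $\int_{\C_+}|K_{\rm mod}(z,\zeta)|\,dV(z)\simeq\omega(\zeta)$, unbounded, with the divergence coming precisely from large $|z|$ when $|\zeta|$ is large; thus $P_{\rm mod}$ maps $L^1_\omega$, not $L^1$, into $L^1$. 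The conclusion you want ($P_{\rm mod}(f\bar b)\in L^1$) is true, but only because $P_{\rm mod}(f\bar b)=h_b^{\rm mod}(f)$ and $h_b^{\rm mod}$ is bounded on $A^1$ — that is, it follows from the sufficiency half, which exploits the holomorphic structure rather than kernel size. The repaired argument is the paper's: $h_b(f)=h_b^{\rm mod}(f)+c\,\langle b,f\rangle\,(z+i)^{-2}$ with $h_b^{\rm mod}(f)\in A^1$ and $(z+i)^{-2}\notin L^1(\C_+)$; alternatively, under your extra integrability hypothesis, one can simply apply Proposition \ref{zeroP} to $f\bar b$.
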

\medskip

In the following, we will use the notation 
$A\lesssim B$ (respectively $A\gtrsim B$, $A\simeq B$) whenever there exists a uniform constant $C>0$ such that $A\le CB$ (respectively $A\ge CB$, respectively $A\lesssim B$ and $A\gtrsim B$).  When $A\lesssim B$ and $A\gtrsim B$, we write $A\simeq B$.

We will use the notation ``$\log $"  for the holomorphic logarithm, and the notation ``$\ln$" for the real one.

\section{Theorem \ref{Bergman}: Proof and related statements}

We refer to \cite{BGS1} for the proof of the sufficiency of Conditions \eqref{meanB} and \eqref{logB}. We start with a lemma, which completes Lemma 2 of \cite{BGS1}, and which will have as corollary that the theorem is sharp for $P$ and $P^+$. \\
We recall the definition of the weight $\omega.$
\begin{equation}
\omega(z):=1+\ln_+\left(\frac 1{\Im m(z)}\right)+ \ln_+(|z|),\quad z\in\C_+.\end{equation}
\begin{lemma}\label{ineq-fund}
Let $\zeta\in \C_+$. For $|\zeta-i|>1,$, the following inequality holds:
 \begin{equation}
    I(\zeta):= \int_{\C_+} \left||z-\overline \zeta|^{-2} -|z+i|^{-2}\right|dV(z)\simeq \omega(\zeta).
 \end{equation}
\end{lemma}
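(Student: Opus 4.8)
The plan is to estimate the integrand pointwise and then to match $I(\zeta)$ to $\omega(\zeta)$ by decomposing $\C_+$ into regions, treating the upper and lower bounds separately. Throughout I would write $\zeta=a+ib$ with $a=\Re e(\zeta)$, $b=\Im m(\zeta)>0$, set $d_1=|z-\overline\zeta|$ and $d_2=|z+i|$, and introduce $\delta:=|\overline\zeta+i|=|\zeta-i|$, the distance between the two reflected centres $\overline\zeta$ and $-i$; the hypothesis is exactly $\delta>1$. The integrand is $|d_1^{-2}-d_2^{-2}|=|d_2^2-d_1^2|\,d_1^{-2}d_2^{-2}$, and the reverse triangle inequality gives the crucial cancellation estimate $|d_2^2-d_1^2|=|d_2-d_1|(d_1+d_2)\le\delta(d_1+d_2)$. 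Since $\omega(\zeta)\simeq 1+\ln_+(1/b)+\ln_+|\zeta|$ and $\ln\delta\lesssim 1+\ln_+|\zeta|$ (because $\delta\le|\zeta|+1$), it suffices to reproduce each of these three terms.

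For the upper bound I would split $\C_+$ by the sizes of $d_1,d_2$. On $\{d_1\le\delta/2\}$ (so $d_2\ge\delta/2$), working in polar coordinates centred at $\overline\zeta$ and using that every point of $\C_+$ satisfies $d_1\ge b$, one gets $\int_{d_1\le\delta/2}d_1^{-2}\,dV\lesssim\ln(\delta/b)\lesssim 1+\ln_+(1/b)+\ln_+|\zeta|$, while $\int_{d_1\le\delta/2}d_2^{-2}\,dV\lesssim 1$. Symmetrically, on $\{d_2\le\delta/2\}$, since $-i$ sits at depth $1$, $\int_{d_2\le\delta/2}d_2^{-2}\,dV\lesssim\ln\delta\lesssim 1+\ln_+|\zeta|$ and $\int_{d_2\le\delta/2}d_1^{-2}\,dV\lesssim 1$. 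On the remaining region $\{d_1,d_2>\delta/2\}$ the triangle inequality alone diverges, so I would use the cancellation bound, giving the integrand $\le\delta(d_1^{-2}d_2^{-1}+d_1^{-1}d_2^{-2})$. On the far part $|z|\ge 2(|\zeta|+1)$ one has $d_1,d_2\simeq|z|$, so this is $\lesssim\delta|z|^{-3}$ and integrates to $\lesssim\delta/(|\zeta|+1)\lesssim 1$. On the bounded middle part ($d_1,d_2>\delta/2$ and $|z|<2(|\zeta|+1)$) I would instead use $d_i^{-1}<2/\delta$ to reduce to $\int d_1^{-2}$ and $\int d_2^{-2}$ over annuli $\delta/2<d_i\lesssim|\zeta|+1$, each $\lesssim\ln((|\zeta|+1)/\delta)\lesssim 1+\ln_+|\zeta|$; here the lower cutoff $\delta/2>1/2$ (using $\delta>1$) is exactly what prevents a spurious $\ln(1/b)$ from reappearing. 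Summing gives $I(\zeta)\lesssim\omega(\zeta)$.

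For the lower bound I would exhibit three regions with essentially disjoint supports, each producing one term of $\omega$. To recover $\ln_+(1/b)$ (needed only when $b$ is small), restrict to $\{|x-a|<y+b,\ b<y<1/2\}$, where $d_1^{-2}\gtrsim(y+b)^{-2}$ dominates $d_2^{-2}$ by a fixed factor, so the difference integrates to $\gtrsim\int_b^{1/2}(y+b)^{-1}\,dy\gtrsim\ln(1/b)$. To recover $\ln_+|\zeta|$ (needed only when $|\zeta|$ is large), use the half-annulus $\{2\le|z|\le|\zeta|/4\}$, where $d_2^{-2}\gtrsim|z|^{-2}$ while $d_1^{-2}\lesssim|\zeta|^{-2}$ is negligible, so the difference is $\gtrsim|z|^{-2}$ and integrates to $\gtrsim\ln|\zeta|$. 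These two regions are disjoint precisely when both terms are active (then $|\zeta|\simeq|a|$ and the first region sits at $|z|\simeq|\zeta|\gg|\zeta|/4$). Finally, the baseline $I(\zeta)\gtrsim 1$ on the complementary compact regime $\{b\ge 1/4,\ |\zeta|\le 8,\ |\zeta-i|\ge 1\}$ follows from continuity and strict positivity of $\zeta\mapsto I(\zeta)$ (strict since $\zeta\neq i$ there) together with compactness. Adding these contributions yields $I(\zeta)\gtrsim\omega(\zeta)$.

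The hard part will be the far-field region in the upper bound: the two kernels cannot be controlled separately, since each carries a non-integrable logarithmic tail, and everything hinges on extracting the extra decay from $|d_2^2-d_1^2|\le\delta(d_1+d_2)$. The hypothesis $\delta=|\zeta-i|>1$ plays a double role — it keeps the middle-region integrals free of an unwanted $\ln(1/b)$, and it is exactly the condition that fails as $\zeta\to i$ (where $I(\zeta)\to 0$ while $\omega(\zeta)\to 1$), so it is indispensable for the lower bound $I(\zeta)\gtrsim 1$.
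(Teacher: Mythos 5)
Your proposal is correct, and it is actually more self-contained than the paper's own proof. For the lower bound you follow essentially the same route as the paper: your strip $\{|x-a|<y+b,\ b<y<1/2\}$ is the paper's region $E_0$ (where $|z-\overline\zeta|^{-2}$ dominates and produces $\ln(1/b)$), your half-annulus $\{2\le|z|\le|\zeta|/4\}$ plays the role of the paper's $E_1:=\{|z+i|<\frac14|\zeta-i|,\ |z|>1\}$ (where $|z+i|^{-2}$ dominates and produces $\ln|\zeta|$), and the baseline $I(\zeta)\gtrsim 1$ is obtained in both cases by continuity and compactness; your explicit remark that positivity fails at $\zeta=i$, so that the hypothesis $|\zeta-i|>1$ is what legitimizes the compactness step, is a point the paper glosses over. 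The genuine difference is the upper bound: the paper does not prove it at all (it is quoted from \cite{BGS1}), whereas you give a complete argument from the cancellation estimate $|d_1-d_2|\le\delta:=|\zeta-i|$ together with the decomposition into $\{d_1\le\delta/2\}$, $\{d_2\le\delta/2\}$ and $\{d_1,d_2>\delta/2\}$, the last split at $|z|\simeq|\zeta|$; this checks out (each piece is $\lesssim 1+\ln_+(1/b)+\ln_+|\zeta|$, using $d_1\ge b$, $d_2\ge 1$ and $\delta\le|\zeta|+1$), and it buys a proof of the equivalence that does not lean on the earlier paper. Two cosmetic repairs: your numerical thresholds do not quite mesh, since the annulus bound $\ln(|\zeta|/8)$ degenerates as $|\zeta|\to 8^+$ while your baseline compact set stops at $|\zeta|\le 8$, so the compact regime should be $\{b\ge 1/4,\ |\zeta|\le M,\ |\zeta-i|\ge 1\}$ with $M$ large enough and the annulus used only for $|\zeta|>M$ (exactly the ``$M$ large enough'' slack the paper allows itself); and in the middle region of the upper bound, what the cutoff $\delta/2>1/2$ really prevents is a spurious $\ln(1/\delta)$, not a spurious $\ln(1/b)$ --- an extra $\ln(1/b)$ would be harmless there, since $\omega$ already contains it.
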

  \begin{proof}
  The above bound has been proved in \cite{BGS1}. We come back to the proof in order to give the bound below. We write $z=x+iy$ and $\zeta=\xi+i\lambda.$
From the continuity of the integral as a function of $\zeta$, it is bounded below on any compact set of $\C_+$. So it is sufficient to prove the inequality for $\lambda<\varepsilon<1$ and for $|\zeta|>M.$ We have two cases to consider.
 \begin{enumerate}
      \item 
If $\zeta$ is such that $\lambda<1/4$: we  write that $I(\zeta)$ is bounded below by the integral over the domain $E_0:=\{z\in \C_+, |x-\xi|<y+\lambda, y<1/4.\}$ On this set, the inequality  $|z-\overline \zeta|^{2}\leq 2(y+\lambda)^2<1/2$ holds so that
 $$\int_{E_0} |z-\overline \zeta|^{-2} dV(z)\geq \int_0^{1/4} \frac{dy}{y+\lambda}=\ln\left(1+\frac 1{4\lambda}\right),$$
 while  $$\int_{E_0} |z+i|^{-2} dV\leq  1/4.$$
So, for $\lambda<1/4$, we have
  $$I(\zeta)\gtrsim 1+\ln_+(\lambda^{-1}).$$
  
   \item If $|\zeta|>M:$ we  write that $I(\zeta)$ is bounded below by the integral on the domain $E_1:=\{z\in \C_+, |z+i|<\frac 14|\zeta-i|, |z|>1\}$. 
      The function to integrate is bounded below by $c|z+i|^{-2},$ so that
      $$I(\zeta)\gtrsim \int_{E_1}|z|^{-2} dV (z)\gtrsim \ln(\frac 14|\zeta|-2).$$
        Here, we used the fact that the half disc $\{|z|<\frac 14|\zeta|-2, y>0\}$ is contained in $E_1$. So finally, we find that 
       $I(\zeta)\gtrsim 1+ \ln_+(|\zeta|)$
      for $|\zeta|\geq M$ large enough. This completes the proof of the lemma.
  \end{enumerate}
  \end{proof}
  \begin{remark}\label{LemmaKmod}
  The same is valid for the kernel $K_{\mod}(z,\zeta)$ as well.
  \end{remark}
 As a corollary, we find a family of functions $f$ non negative outside a fixed compact set and  for which the $L^1$ norm of $Pf$ is equivalent to the  norm of $f$ in the weighted  space $L^1_\omega(\C_+)$. This is given in the following proposition.

 \begin{proposition}
For each $\zeta=\xi+i\lambda \in \C_+$ such that $|\zeta-i|>1,$ let $$f_\zeta:=\frac{4}{\pi \lambda^2}\chi_{B(\zeta, \lambda/2)}-\frac {4}{\pi}\chi_{B(i, 1/2)}.$$ 
Here $\chi_E$ stands for the characteristic function of $E$.
Then $$\|f_\zeta\|_1=2,\;\text{ while }\;
\|Pf_\zeta\|_1\simeq \omega(\zeta).$$
In particular,
$$\|Pf_\zeta\|_1\simeq \|f_\zeta\|_{L^1_\omega}.$$
Constants may be chosen independent of $\zeta.$
\end{proposition}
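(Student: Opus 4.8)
The plan is to show that $Pf_\zeta$ coincides \emph{exactly} with the modified kernel $K_{\mod}(\cdot,\zeta)$, so that the estimate $\|Pf_\zeta\|_1\simeq\omega(\zeta)$ becomes an immediate consequence of Lemma \ref{ineq-fund} together with Remark \ref{LemmaKmod}. The decisive point is that, for fixed $z\in\C_+$, the map $\eta\mapsto K(z,\eta)=\frac{1}{\pi(z-\overline\eta)^2}$ depends only on $\overline\eta$ and is therefore anti-holomorphic, hence harmonic, on all of $\C_+$: its only pole, at $\eta=\overline z$, lies in the lower half-plane. Since the closed discs $\overline{B(\zeta,\lambda/2)}$ and $\overline{B(i,1/2)}$ are contained in $\C_+$, the solid mean value property applies on each of them.

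First I would record the two normalisations. The disc $B(\zeta,\lambda/2)$ has area $\pi\lambda^2/4$ and the disc $B(i,1/2)$ has area $\pi/4$, so each of the two bumps composing $f_\zeta$ has integral $1$; in particular $\int_{\C_+}f_\zeta\,dV=0$. When the two discs are disjoint --- which holds as soon as $|\zeta-i|\ge\frac{\lambda+1}{2}$, and fails only when $\zeta$ lies in a fixed compact subset of $\C_+$ (bounded, and bounded away both from $\partial\C_+$ and from infinity) --- the two supports do not interact and $\|f_\zeta\|_1=2$. On that exceptional set $\omega(\zeta)\simeq 1$ and all the quantities below are $\simeq 1$ by continuity, so it is harmless.

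Next I would compute $Pf_\zeta$ via the mean value property. Applying it with radius $\lambda/2$ at the centre $\zeta$ and with radius $1/2$ at the centre $i$ gives
\begin{equation*}
\frac{4}{\pi\lambda^2}\int_{B(\zeta,\lambda/2)}K(z,\eta)\,dV(\eta)=K(z,\zeta),\qquad \frac{4}{\pi}\int_{B(i,1/2)}K(z,\eta)\,dV(\eta)=K(z,i),
\end{equation*}
and subtracting yields $Pf_\zeta(z)=K(z,\zeta)-K(z,i)=K_{\mod}(z,\zeta)$ for every $z\in\C_+$. Integrating in $z$ and invoking Remark \ref{LemmaKmod} then gives $\|Pf_\zeta\|_1=\int_{\C_+}|K_{\mod}(z,\zeta)|\,dV(z)\simeq\omega(\zeta)$. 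For the weighted norm I would use that $\omega$ varies boundedly on the Bergman ball $B(\zeta,\lambda/2)$: there $\Im m\,\eta\simeq\lambda$ and $|\eta|\simeq|\zeta|$, so the two logarithmic terms change by $O(1)$ and, since $\omega\ge 1$, one has $\omega(\eta)\simeq\omega(\zeta)$; likewise $\omega\simeq 1$ on $B(i,1/2)$. Integrating the normalised bumps then gives $\|f_\zeta\|_{L^1_\omega}\simeq\omega(\zeta)+1\simeq\omega(\zeta)$, which matches $\|Pf_\zeta\|_1$.

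I expect the only genuinely substantive step to be the mean-value collapse above: recognising that $K(z,\cdot)$ is harmonic in its second variable, so that averaging the $L^1$-normalised indicators reproduces the kernel at the two centres and turns $Pf_\zeta$ into $K_{\mod}(\cdot,\zeta)$ on the nose. Once that identity is in hand the proposition is a direct corollary of Lemma \ref{ineq-fund}, and the remaining work --- the disjointness bookkeeping behind the exact value $\|f_\zeta\|_1=2$ and the comparability of $\omega$ across a Bergman ball --- is routine.
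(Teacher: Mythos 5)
Your proof is correct and follows essentially the same route as the paper's: the solid mean value property (applied to the harmonic, indeed anti-holomorphic, function $\eta\mapsto K(z,\eta)$) collapses $Pf_\zeta$ exactly to $K_{\mod}(\cdot,\zeta)$, and Lemma \ref{ineq-fund} together with Remark \ref{LemmaKmod} then yields $\|Pf_\zeta\|_1\simeq\omega(\zeta)$. Your extra bookkeeping --- the disc-overlap caveat for the exact value $\|f_\zeta\|_1=2$ and the comparability of $\omega$ on the two bumps for $\|f_\zeta\|_{L^1_\omega}\simeq\omega(\zeta)$ --- merely fills in details the paper leaves implicit.
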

\begin{proof}
As a consequence of the mean value property of holomorphic functions $Pf_\zeta (z)=\frac 1 \pi\left[(z-\overline \zeta)^{-2} -(z+i)^{-2}\right].$ The result follows from Lemma \ref{ineq-fund}.  
\end{proof}
This proposition proves that Condition \eqref{logB} is optimal.\\

We have the same proposition with $P^+$ in place of $P$.
\begin{proposition}
We have 
$\|f_\zeta\|_1+\|P^+f_\zeta\|_1\simeq \omega(\zeta).$
Constants may be chosen independent of $\zeta.$
\end{proposition}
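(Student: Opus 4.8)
The plan is to mirror the proof of the previous proposition, reducing everything to Lemma~\ref{ineq-fund}; the one essential new difficulty is that the kernel of $P^+$ carries a modulus, $|K(z,w)|=\frac1{\pi|z-\overline w|^{2}}$, and is therefore no longer anti-holomorphic in $w$, so the mean value property that made the computation of $Pf_\zeta$ exact is lost. Since $\|f_\zeta\|_1=2$ by the previous proposition, it remains to show $\|P^+f_\zeta\|_1\simeq\omega(\zeta)$. As $\int f_\zeta\,dV=0$, we have $P^+f_\zeta=P^+_{\mod}f_\zeta$, and the natural comparison is the function $K^+_{\mod}(z,\zeta)=|K(z,\zeta)|-|K(z,i)|$, whose $L^1(dV(z))$ norm is $\frac1\pi I(\zeta)\simeq\omega(\zeta)$ by Lemma~\ref{ineq-fund}.

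First I would write $P^+f_\zeta=A_\zeta-B$, where $A_\zeta(z)$ and $B(z)$ denote the averages of $w\mapsto|K(z,w)|$ over the balls $B(\zeta,\lambda/2)$ and $B(i,1/2)$, normalised by their areas $\frac{\pi\lambda^2}4$ and $\frac\pi4$. Where the mean value property gave $A_\zeta(z)=|K(z,\zeta)|$ exactly for the holomorphic kernel of $P$, here I would only estimate the defect. Setting $R_1:=A_\zeta-|K(\cdot,\zeta)|$ and $R_2:=B-|K(\cdot,i)|$, a first-order Taylor estimate for $g(w)=|z-\overline w|^{-2}$, using $|\nabla_w g|\lesssim|z-\overline w|^{-3}$ together with the comparison $|z-\overline w|\simeq|z-\overline\zeta|$ valid for $w\in B(\zeta,\lambda/2)$ (because $|z-\overline\zeta|>\lambda$), yields
\begin{equation*}
|R_1(z)|\lesssim\frac{\lambda}{|z-\overline\zeta|^{3}},\qquad |R_2(z)|\lesssim\frac1{|z+i|^{3}}.
\end{equation*}
The crucial point is that, in contrast to the individual averages, these defects are integrable: the elementary computation $\int_{\C_+}|z-\overline\zeta|^{-3}\,dV(z)=2/\lambda$ gives $\int_{\C_+}|R_1|\,dV\lesssim1$, and likewise $\int_{\C_+}|R_2|\,dV\lesssim1$, with constants independent of $\zeta$.

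To conclude, the triangle inequality in $L^1$ gives
\begin{equation*}
\Bigl|\,\|P^+f_\zeta\|_1-\tfrac1\pi I(\zeta)\,\Bigr|\le\int_{\C_+}\bigl(|R_1|+|R_2|\bigr)\,dV\le C
\end{equation*}
uniformly in $\zeta$. Together with $\frac1\pi I(\zeta)\simeq\omega(\zeta)$ and $\omega\ge1$, this yields at once the upper bound $\|P^+f_\zeta\|_1\lesssim\omega(\zeta)$; it also gives the lower bound $\|P^+f_\zeta\|_1\gtrsim\omega(\zeta)$ once $\omega(\zeta)$ exceeds a fixed threshold, the additive constant being then absorbed. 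On the complementary set $\{\lambda\ge\delta,\ |\zeta|\le R,\ |\zeta-i|>1\}$, which is relatively compact in $\C_+$ and on which $\omega\simeq1$, I would bound $\|P^+f_\zeta\|_1$ below by a positive constant using the continuity and strict positivity of $\zeta\mapsto\|P^+f_\zeta\|_1$ (the latter because $f_\zeta$ has nonzero first moment $\zeta-i$, so $P^+f_\zeta\not\equiv0$), exactly as in the compactness remark of Lemma~\ref{ineq-fund}. The main obstacle is precisely this loss of the mean value property: the whole argument rests on showing that replacing each average of $|K(z,\cdot)|$ by its value at the centre costs only a uniformly bounded $L^1$ error, so that the cancellation carried by the mean-zero condition still produces the leading term $I(\zeta)$.
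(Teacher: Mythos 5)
Your proof is correct and follows essentially the same route as the paper: both compare $P^+f_\zeta$ with $K^+_{\mod}(\cdot,\zeta)$ by bounding the mean-value defect $\bigl||K(\cdot,\zeta')|-|K(\cdot,\zeta)|\bigr|\lesssim \lambda|z-\overline\zeta|^{-3}$ for $\zeta'\in B(\zeta,\lambda/2)$ (and likewise for the ball around $i$), noting this defect is uniformly integrable, and then invoke Lemma~\ref{ineq-fund}. The only cosmetic difference is at the end: your continuity/strict-positivity argument for $\|P^+f_\zeta\|_1$ on the compact region is unnecessary, since the term $\|f_\zeta\|_1=2$ on the left-hand side of the stated equivalence already gives the lower bound where $\omega(\zeta)\simeq 1$.
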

\begin{proof}
We use again Remark \ref{LemmaKmod}. We prove that the difference between $P^+f_\zeta$ and $P_{\mod} f_\zeta$ is bounded in $L^1$-norm. We prove that $\frac 4{\pi \lambda^2 }P^+(\chi_{B(\zeta, \lambda/2)})-|K(z, \zeta)|$ has an $L^1$ norm bounded by a uniform constant. As the other  terms in $f_\zeta$ is analogous, it will give the result.
But for $\zeta'\in B(\zeta, \lambda/2),$ the function  $|K(\cdot, \zeta)|-|K(\cdot, \zeta')|$ is bounded, up to a constant,  by $\frac{\lambda}{|z-\overline \zeta|^3}, $. This quantity  is uniformly in $L^1$. It ends the proof.
\end{proof}

Next, we prove that the condition \eqref{meanB} is necessary for $Pf$ to be integrable. 
 \begin{proposition} \label{zeroP}
Let $f\in L^1(\mathbb C_+) $ 
 such that $Pf$ (resp. $P^+f$) is in $L^1(\C_+)$.  Then $f$ has integral $0$.
\end{proposition}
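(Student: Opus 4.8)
The plan is to exploit the precise rate of decay of $Pf$ and $P^+f$ at infinity. For $f\in L^1(\C_+)$ both operators are well defined pointwise, since for fixed $z$ one has $|z-\overline\zeta|\ge \Im m\,z$ on $\C_+$, so the kernels are bounded by $(\Im m\,z)^{-2}$ and the defining integrals converge absolutely. Writing $c:=\frac1\pi\int_{\C_+}f\,dV$, the heuristic is that as $z\to\infty$ the factor $(z-\overline\zeta)^{-2}$ (resp. $|z-\overline\zeta|^{-2}$) behaves like $z^{-2}$ (resp. $|z|^{-2}$), so that $z^2Pf(z)\to c$ and $|z|^2P^+f(z)\to c$. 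Since $|z|^{-2}$ is \emph{not} integrable over $\C_+$ at infinity (in polar coordinates $\int^\infty r^{-2}\,r\,dr=\int^\infty r^{-1}\,dr=\infty$), a nonzero $c$ would be incompatible with $Pf$ or $P^+f$ lying in $L^1$. This should force $c=0$, which is exactly $\int_{\C_+}f\,dV=0$.

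To turn this into a proof I would fix $\delta\in(0,\pi/2)$ and work in the cone $C_\delta:=\{z\in\C_+:\ \delta\le\arg z\le\pi-\delta\}$. On $C_\delta$ one has $\Im m\,z\ge |z|\sin\delta$, hence $|z-\overline\zeta|^2\ge(\Im m\,z)^2\ge |z|^2\sin^2\delta$, giving the uniform bound $|z|^2/|z-\overline\zeta|^2\le(\sin\delta)^{-2}$ for all $\zeta\in\C_+$; meanwhile, for each fixed $\zeta$, $z^2/(z-\overline\zeta)^2\to1$ and $|z|^2/|z-\overline\zeta|^2\to1$ as $|z|\to\infty$. Dominated convergence (the integrands being bounded by $(\sin\delta)^{-2}|f|$) then yields $z^2Pf(z)\to c$ and $|z|^2P^+f(z)\to c$ as $|z|\to\infty$ inside $C_\delta$. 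If $c\neq0$, there is $R_0$ with $|Pf(z)|\ge \tfrac{|c|}{2}|z|^{-2}$ (and likewise for $P^+f$) on $C_\delta\cap\{|z|>R_0\}$, whence
\[
\int_{\C_+}|Pf|\,dV\ \ge\ \frac{|c|}{2}\int_{C_\delta\cap\{|z|>R_0\}}\frac{dV(z)}{|z|^2}\ =\ \frac{|c|}{2}(\pi-2\delta)\int_{R_0}^{\infty}\frac{dr}{r}\ =\ \infty,
\]
contradicting $Pf\in L^1(\C_+)$; the same computation rules out $P^+f\in L^1(\C_+)$. Hence $c=0$.

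The step I expect to be the crux is the passage from the pointwise asymptotic to an integrable lower bound, and in particular the fact that one cannot let $z\to\infty$ in an arbitrary fashion: near the real axis the ratio $|z|^2/|z-\overline\zeta|^2$ is unbounded (for $\zeta$ close to $z$), so the dominated convergence argument genuinely requires the restriction to a cone $C_\delta$. Working in the cone is also what makes the argument uniform in the two cases, which matters because $P^+f$ is not holomorphic: for $Pf$ alone one could alternatively invoke the sub-mean value property of $|Pf|$ to get the decay of $Pf$ along the imaginary axis, but this route is unavailable for $P^+f$, and the cone estimate handles both $P$ and $P^+$ simultaneously. Note that this argument does not use Lemma~\ref{ineq-fund}; it relies only on the size of the (unmodified) kernels together with the non-integrability of $|z|^{-2}$ at infinity.
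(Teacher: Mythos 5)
Your proof is correct and follows essentially the same route as the paper: the paper also applies dominated convergence in a cone (it uses $\{|x|<y\}$, your $C_{\pi/4}$, with the uniform bound $|1-\overline{\zeta}/z|>1/2$ there) to get $z^2Pf(z)\to\frac{1}{\pi}\int_{\C_+}f\,dV$, and then concludes from the non-integrability of $z\mapsto z^{-2}$ over that cone, treating $P^+f$ by the identical argument. The only differences are cosmetic: you take a general aperture $\delta$ and write out the polar-coordinate divergence explicitly.
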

\begin{proof}
We prove it for $Pf$ in $L^1$. The same proof works when $P^+f$ is in $L^1$. We write 
$$z ^2 Pf(z)=c\int_{\C_+} \frac{f(\zeta) dV(\zeta)}{(1-\overline \zeta/z)^2}$$
and we use Lebesgue Dominated Convergence Theorem. When $|x|<y$, as $|1- \overline \zeta/z|>y/|z|>1/2 $, we get for $|z|$ tending to $\infty$ with $|x|<y$
$$z ^2 Pf(z) \longrightarrow \int _{\C_+} f(\zeta) dV(\zeta).$$
Since $z\mapsto z^{-2}$ is not integrable over this domain, this forces the integral to be $0$ if $Pf$ is integrable.
\end{proof}

\begin{remark} As $z\mapsto \frac{\omega^k(z)}{z^2}$ is not integrable, Proposition \ref{zeroP} remains valid when the assumptions are weakened and only assume that $Pf$ (resp. $P^+f$) is integrable for the weighted measure $\omega^{k} dV$  for $k\geq -1$.  
\end{remark}

We are now ready to finish the proof of the necessity conditions of Theorem \ref{Bergman}.  Let $f\in L^1(\C_+)$ be a non negative function outside a compact set of $\C_+$ such that 
 $P^+(f)$ belongs to $L^1(\C_+)$. Without loss of generality, we can assume  that $f$ is non negative for $\zeta\notin B(i, 1/2)$ and has norm in $L^1$ equal to $1.$ We already know that $f$ has integral $0$ so that
 $$P^+f(z)=\int (|K(z, \zeta)|-|K(z, i)|) f(\zeta) dV(\zeta).$$ 
 Moreover, since 
 $\displaystyle \int (|K(z, \zeta)|-|K(z, i)|) f(\zeta) dV(\zeta)$ is in $L^1$ and $f_{-}$ has compact support, we can replace $f$ by $f_{+}. $ So the question becomes: {\sl Let $f\in L^1$ be a non negative function such that $$P^+_{\mod} f=\int (|K(\cdot, \zeta)|-|K(\cdot, i)|) f(\zeta) dV(\zeta)\in L^1(\C_+)$$ then prove that $f$ satisfies \eqref{logB}.}  The proof is given in the next two lemmas
 
 The first step mimics the proof of Lemma \ref{ineq-fund}. We give its proof for completeness, even if the proof is already  given in \cite{BGS1}.
 \begin{lemma}\label{boundary}
Under the assumptions that $f$ is non negative and $P^+_\mod f$ is in $L^1$, we have $$\int_{\C_+ } f(\zeta) \ln_+\left(\frac{1}{\Im m\zeta}\right)dV(\zeta)<\infty.$$
\end{lemma}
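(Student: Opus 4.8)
The plan is to bound $\int_{\C_+} f(\zeta)\ln_+\big((\Im m\zeta)^{-1}\big)\,dV(\zeta)$ from above by $\|P^+_{\mod}f\|_1$ plus a multiple of $\|f\|_1$. Since $\ln_+((\Im m\zeta)^{-1})$ vanishes for $\Im m\zeta\geq 1$ and is at most $\ln 4$ for $\Im m\zeta\geq 1/4$, the part of the integral over $\{\Im m\zeta\geq 1/4\}$ costs at most $\ln 4\,\|f\|_1$, so I only need to control it over $\{\lambda<1/4\}$, writing $\zeta=\xi+i\lambda$. The device is to integrate $P^+_{\mod}f$ over the boundary strip $D:=\{z=x+iy:\ 0<y<1/4\}$ only, on which the reference kernel is harmless: since $|K(z,i)|=\tfrac1{\pi|z+i|^2}\leq \tfrac1\pi$ and
\[
\int_D|K(z,i)|\,dV(z)=\frac1\pi\int_0^{1/4}\frac{\pi\,dy}{y+1}=\ln\tfrac54=:c_0<\infty,
\]
whereas $|K(z,i)|$ fails to be integrable over all of $\C_+$. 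This is exactly why the strip, and not the whole plane, is the right region in the $z$-variable.

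The main obstacle is that $K^+_{\mod}(z,\zeta)=|K(z,\zeta)|-|K(z,i)|$ changes sign, so $|P^+_{\mod}f|$ is not obtained by simply moving the absolute value inside the integral, and a naive application of Fubini would demand precisely the finiteness I am trying to establish. I would get around this by splitting the kernel into positive and negative parts, writing $P^+_{\mod}f=A-B$ with
\[
A(z)=\int_{\C_+}\big(|K(z,\zeta)|-|K(z,i)|\big)_+f(\zeta)\,dV(\zeta),\qquad B(z)=\int_{\C_+}\big(|K(z,\zeta)|-|K(z,i)|\big)_-f(\zeta)\,dV(\zeta),
\]
both nonnegative because $f\geq 0$. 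Since the negative part never exceeds $|K(z,i)|$, Tonelli gives $\int_D B\,dV\leq c_0\|f\|_1<\infty$; combined with the hypothesis $P^+_{\mod}f\in L^1$ this forces $A=P^+_{\mod}f+B\in L^1(D)$ with $\int_D A\,dV\leq \|P^+_{\mod}f\|_1+c_0\|f\|_1$. As $A$ now arises from a genuinely nonnegative integrand, Tonelli is legitimate and I may exchange the order of integration freely, obtaining $\int_D A\,dV=\int_{\C_+}\big[\int_D(|K(z,\zeta)|-|K(z,i)|)_+\,dV(z)\big]f(\zeta)\,dV(\zeta)$.

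It then remains to bound the inner integral from below, and here I mimic case (1) of Lemma \ref{ineq-fund}: restricting to $E_0=\{z:\ |x-\xi|<y+\lambda,\ y<1/4\}\subset D$, one has $|z-\overline\zeta|^2\leq 2(y+\lambda)^2<1/2$ for $\lambda<1/4$, hence $|K(z,\zeta)|\geq\tfrac1{2\pi(y+\lambda)^2}\geq\tfrac2\pi>\tfrac1\pi\geq|K(z,i)|$, so the kernel difference is positive on $E_0$ and
\[
\int_D\big(|K(z,\zeta)|-|K(z,i)|\big)_+\,dV(z)\geq\int_{E_0}\big(|K(z,\zeta)|-|K(z,i)|\big)\,dV(z)\geq \tfrac12\ln\Big(1+\tfrac1{4\lambda}\Big)-\tfrac1{4\pi}\geq \tfrac12\ln\tfrac1\lambda-C.
\]
Inserting this into the Tonelli identity for $A$ and recalling that $\ln(1/\lambda)=\ln_+(1/\lambda)$ on $\{\lambda<1/4\}$ yields $\tfrac12\int_{\{\lambda<1/4\}}f(\zeta)\ln_+(\lambda^{-1})\,dV(\zeta)\lesssim \|P^+_{\mod}f\|_1+\|f\|_1<\infty$, and adding the trivially bounded piece over $\{\lambda\geq 1/4\}$ completes the proof. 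I expect the only delicate point to be the sign bookkeeping of the second paragraph that legitimizes the interchange of integrals; the geometric lower bound itself is a direct transcription of Lemma \ref{ineq-fund}.
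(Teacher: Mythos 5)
Your argument is correct and is essentially the paper's own proof: both restrict the integration in $z$ to a boundary strip where the reference kernel $|K(\cdot,i)|$ is integrable, use the $L^1$ hypothesis on $P^+_{\mod}f$ together with the positivity of $f$ to make Tonelli legitimate, and then recover the logarithm from the lower bound of case (1) of Lemma \ref{ineq-fund}. Your splitting of $K^+_{\mod}$ into positive and negative parts is just a more explicit rendering of the paper's one-line ``by difference'' step (the paper adds back the whole term $\int_{\C_+}|K(z,i)|f(\zeta)\,dV(\zeta)$ so as to work with the nonnegative kernel $|K(z,\zeta)|$ itself, over the strip $\{y<1\}$ rather than $\{y<1/4\}$), so the two proofs differ only in bookkeeping.
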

\begin{proof}
 If we reduce the integral of $|P_{\mod}^+f(z)|$ to the one over the set $y<1,$ then the integral coming from the second term is bounded since 
 $$\int_{\{x+iy\in \C_+ ; y<1\}}\int_{\C_+} |K(z, i)| f(\zeta) dV(\zeta)dV(z)\lesssim \int_0^1 (y+1)^{-1}dy \leq \ln (2).$$
 Finally by difference, we get that 
 $$\int_{\{x+iy\in \C_+ ; y<1\}} \int_{\C_+} |K(z, \zeta)| f(\zeta) dV(\zeta)dV(z)<\infty.$$
We integrate first in $z$ and find that
$$\int_{\C_+} f(\zeta) \ln\left(1+\frac{1}{\Im m\zeta}\right)dV(\zeta)<\infty.$$
 \end{proof}
 \begin{lemma}\label{far}
Under the previous assumptions on $f$, we have $$\int_{\C_+ } |f(\zeta)| \ln_+(|\zeta|)dV(\zeta)<\infty.$$
\end{lemma}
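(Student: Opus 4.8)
The plan is to exploit the sign structure of the modified kernel, together with the already-proved boundary estimate (Lemma \ref{boundary}) and the finiteness of $m:=\int_{\C_+}f\,dV$. Recall that we have reduced to $f\ge 0$ with $P^+_\mod f\in L^1$, and that
$$P^+_\mod f(z)=\int_{\C_+}|K(z,\zeta)|f(\zeta)\,dV(\zeta)-m\,|K(z,i)|.$$
Fix $z$ with $|z|>1$ and split $f=f\chi_{\{|\zeta|\le 4|z|\}}+f\chi_{\{|\zeta|>4|z|\}}$, writing $\Phi(s):=\int_{\{|\zeta|>s\}}f\,dV$. For the far part, if $|\zeta|>4|z|$ then $|z-\overline\zeta|\ge 3|z|$ while $|z+i|\le 2|z|$, so $|K(z,\zeta)|\le \tfrac49|K(z,i)|$; thus the far mass adds to $P^+_\mod f(z)$ a positive amount at most $\tfrac49|K(z,i)|\Phi(4|z|)$, to be compared with the negative term $-|K(z,i)|\Phi(4|z|)$ extracted from the subtracted kernel. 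This is the quantitative form of case (2) of Lemma \ref{ineq-fund}: for $\zeta$ far from $z$, the kernel $|K(z,i)|$ dominates.

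Setting $Q(z):=\int_{\{|\zeta|\le 4|z|\}}\big(|K(z,\zeta)|-|K(z,i)|\big)f(\zeta)\,dV(\zeta)$, this gives the pointwise bound, for $|z|>1$,
$$|P^+_\mod f(z)|\ge -P^+_\mod f(z)\ge \tfrac59\,|K(z,i)|\,\Phi(4|z|)-|Q(z)|.$$
Integrating over $\{|z|>1\}$ and applying Tonelli to the first term,
$$\int_{\{|z|>1\}}|K(z,i)|\,\Phi(4|z|)\,dV(z)=\int_{\C_+}f(\zeta)\Big(\int_{\{1<|z|<|\zeta|/4\}}|K(z,i)|\,dV(z)\Big)\,dV(\zeta),$$
where the inner integral is $\simeq\ln(|\zeta|/4)\gtrsim\ln_+|\zeta|$ for $|\zeta|$ large, again exactly as in case (2) of Lemma \ref{ineq-fund}. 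Hence $\|P^+_\mod f\|_1$ dominates $\int_{\C_+}f\ln_+|\zeta|\,dV$ up to the error $\int_{\{|z|>1\}}|Q|\,dV$.

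It remains to control that error. By Tonelli,
$$\int_{\{|z|>1\}}|Q(z)|\,dV(z)\le \int_{\C_+}f(\zeta)\,\mathcal I(\zeta)\,dV(\zeta),\qquad \mathcal I(\zeta):=\int_{\{|z|>\max(1,\,|\zeta|/4)\}}\big||K(z,\zeta)|-|K(z,i)|\big|\,dV(z),$$
and the crucial step is the estimate $\mathcal I(\zeta)\lesssim 1+\ln_+\big(1/\Im m(\zeta)\big)$. The heuristic is that restricting to $|z|\gtrsim|\zeta|$ excises precisely the region $E_1\approx\{1<|z|<|\zeta|/4\}$ which in Lemma \ref{ineq-fund} carried the entire $\ln_+|\zeta|$-contribution to $I(\zeta)$, so that only the near-boundary (diagonal) part survives. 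Granting this, $\int_{\{|z|>1\}}|Q|\,dV\lesssim \int_{\C_+}f\,dV+\int_{\C_+}f\,\ln_+(1/\Im m(\zeta))\,dV<\infty$ by $f\in L^1$ and Lemma \ref{boundary}, and combining with the previous paragraph yields $\int_{\C_+}f\ln_+|\zeta|\,dV<\infty$, which is the claim.

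The main obstacle is exactly this estimate for $\mathcal I(\zeta)$: one cannot merely write $\mathcal I=I-(\text{excised part})$ and invoke $I(\zeta)\simeq\omega(\zeta)$, since that only gives $\mathcal I\lesssim\omega$, still carrying a $\ln_+|\zeta|$ term. Instead $\mathcal I$ must be bounded directly, rerunning the computation of Lemma \ref{ineq-fund} on the complementary region $\{|z|\gtrsim|\zeta|\}$: for $|z|\gg|\zeta|$ one uses the cancellation $\big||K(z,\zeta)|-|K(z,i)|\big|\lesssim(|\zeta|+1)/|z|^3$ to get an $O(1)$ contribution, while the contribution near $\overline\zeta$, where the diagonal singularity sits, reproduces only the boundary logarithm $\ln_+(1/\Im m(\zeta))$. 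One should also verify the elementary domination $|K(z,\zeta)|\le\tfrac49|K(z,i)|$ for $|\zeta|>4|z|$, $|z|>1$, on which the pointwise lower bound rests.
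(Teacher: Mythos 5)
Your overall skeleton --- exploiting the sign of the modified kernel where $|K(z,i)|$ dominates, extracting the main term by Tonelli, and controlling the rest using only $\|f\|_1$ --- is the same as the paper's, but the estimate you yourself flag as crucial is false, and this is a genuine gap, not a technicality. Take $\zeta=\xi+i\lambda$ with $\lambda=1$ and $|\xi|$ large. The annulus $A_\zeta:=\{z\in\C_+:\ |\zeta|/4<|z|<4|\zeta|\}$ lies entirely inside your region of integration $\{|z|>\max(1,|\zeta|/4)\}$, and on it $|K(z,i)|\simeq|\zeta|^{-2}$, so $\int_{A_\zeta}|K(z,i)|\,dV=O(1)$; on the other hand, substituting $w=z-\overline\zeta$,
$$\int_{A_\zeta}|K(z,\zeta)|\,dV(z)\ \ge\ \int_{\{2\lambda<|w|<|\zeta|/2,\ \Im m\, w>|w|/2\}}\frac{dV(w)}{\pi|w|^2}\ \gtrsim\ \ln\frac{|\zeta|}{\lambda},$$
since that sector maps into $A_\zeta$ and satisfies $\Im m\,z>0$. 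Hence $\mathcal I(\zeta)\gtrsim\ln_+|\zeta|$ for $\zeta=\xi+i$: the diagonal singularity of $|K(\cdot,\zeta)|$ sits at $z\approx\zeta$, i.e.\ at $|z|\approx|\zeta|$, \emph{inside} your domain, and it produces $\ln_+|\zeta|+\ln_+(1/\Im m(\zeta))$, not only the boundary logarithm as your heuristic asserts. Consequently your error term $\int_{\{|z|>1\}}|Q|\,dV\le\int f\,\mathcal I\,dV$ is itself of size $\int f\ln_+|\zeta|\,dV$ (for instance when $f$ is spread out at height $\Im m\,\zeta\simeq 1$, where Lemma \ref{boundary} gives nothing), i.e.\ as large as the main term. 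The resulting inequality reads $\int f\ln_+|\zeta|\,dV\lesssim\|P^+_{\mod}f\|_1+\|f\|_1+\int f\ln_+|\zeta|\,dV$, which is vacuous; no absorption is possible since one does not know a priori that $\int f\ln_+|\zeta|\,dV$ is finite.

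The missing idea is exactly the paper's choice of the $z$-domain: one must integrate $z$ not over all of $\{|z|>1\}$ but over the cone $F=\{x+iy:\ |x|\le y,\ y>1\}$. On $F$ one has $|K(z,\zeta)|\le\pi^{-1}(y+\lambda)^{-2}\le\pi^{-1}y^{-2}\lesssim|K(z,i)|$ \emph{uniformly in} $\zeta$, because $|z-\overline\zeta|\ge y+\lambda$ and $|z+i|\simeq y$ on the cone; the diagonal singularity has disappeared. Then the intermediate annulus $\frac14|\zeta-i|<|z+i|<4|\zeta-i|$ contributes $O(1)$, the far region $|z+i|>4|\zeta-i|$ contributes $O(1)$ by the cancellation bound $\lesssim|\zeta-i|\,|z+i|^{-3}$ (this part of your proposal is fine), and only the region $|z+i|<\frac14|\zeta-i|$, where $K^+_{\mod}(z,\zeta)\le-\frac89|K(z,i)|$, carries the logarithm: $\int_{F\cap\{|z+i|<\frac14|\zeta-i|\}}|z+i|^{-2}\,dV(z)\simeq\ln_+|\zeta|$. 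With this single change your argument closes, the error term becomes $O(\|f\|_1)$, you no longer need Lemma \ref{boundary} at all --- and the proof is then precisely the paper's.
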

\begin{proof}
 We only consider the integral of $P^+_{\mod} f$ over the set $$F:=\{z=x+iy ; |x|\leq y, y>1\}.$$ We write for $z\in F$, 
$$P^+_{\mod} f(z):=I_1(z)+I_2(z)+I_3(z),$$
where $$I_j(z):=\int_{E_j}K^+_{\mod}(z, \zeta)f(\zeta) dV(\zeta).$$ We will choose $E_j, j=1,2, 3$ so that $\int_F |I_j(z)|dV(z)\lesssim 1$ for $j=2, 3$, while 
$K^+_{mod}(z, \zeta)$ is non positive on $E_1$. More precisely, we take 
\begin{eqnarray*}
E_1&:=&\left\{|z+i|< \frac 14 |\zeta-i|\right\}\\
E_2&:=&\left\{\frac 14 |\zeta-i|<|z+i|<4|\zeta-i|\right\}\\  E_3&:=&\left\{ |z+i|>4|\zeta-i|\right\}.
\end{eqnarray*}
Under the first condition, the second term of $K^+_{\mod}$ is the largest one, which gives the non positivity. Under the second condition, since $|K^+_{\mod}(z, \zeta)|\lesssim y^{-2},$ its integral in $z$ on a domain of the form $R\leq |z+i|\leq CR$ is uniformly bounded. Finally, in the third domain, we use the fact that the kernel is given by a difference, which  is bounded, up to a constant,  by $|\zeta-i|\times |z+i|^{-3}.$ Its integral in $z$ is uniformly bounded. So these last two terms are bounded under the unique assumption that $f$ is integrable. Finally, under the assumption that $P^+_{\mod} f$ is integrable, we get 
 \begin{equation}\label{crucial}\int_{F}\int_{|z+i|< \frac 14 |\zeta-i|}|z+i|^{-2} f(\zeta)dV(\zeta)dV(z)<\infty.\end{equation}
We integrate first in $z$ to find that
$$\int_{\C_+} f(\zeta) \ln_+(|\zeta-i|)dV(\zeta)<\infty.$$
This finishes the proof.
 \end{proof}

    
  $$
 $$ we have 
 $$
 $$

 \bigskip

Next, we consider weighted spaces with weighted measure $\omega^k dV$
where $\omega$ is defined by \eqref{omega}. We only give  sufficient conditions for $Pf$ to be in $L^1_{\omega^k}$ in the next proposition. We outline the fact that the result depends on the power $k$. Basically, when $k>-1$, the sufficient condition for $Pf$ to be in $L^1_{\omega^k}$ may be written in terms of the weight $\omega^{k+1}$. For $k=-1,$ the condition is given in terms of the $\ln \ln$ function. For $k<-1$ there is no further condition since the kernel is now uniformly integrable for the measure $\omega^k dV$. 
\begin{proposition} \label{BergmanW}
Let $f$ be an integrable function. Sufficient conditions for $Pf$ (resp. $P^+f$)  to be in $L^1_{w^{k}}(\C_+)$ are given in the following enumeration.
\begin{itemize}
\item If $k>-1,$  it is sufficient that  $f$ satisfies the two conditions:
\begin{equation}\label{meanBW}
\int_{\C_+} f dV(z) =0,
\end{equation}
\begin{equation}\label{logBW}
\int_{\C_+} |f(z)|\omega^{k+1}(z) dV(z) <\infty.
\end{equation}
\item If $k=-1$, it is sufficient that $f$ satisfies the condition \eqref{meanBW} and the condition
\begin{equation}\label{logBW1}
\int_{\C_+} |f(z)|(1+\ln_+(\ln_+ (\Im m(z))^{-1}))+\ln_+(\ln_+ (|z|)) dV(z) <\infty.
\end{equation}
\item If $k<-1$, $Pf$ belongs to $L^1_{\omega^k}(\C_+)$ with no extra condition.
\end{itemize}
\end{proposition}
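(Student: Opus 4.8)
The plan is to reduce each of the three cases to a single pointwise estimate for the weighted integral of the (modified) kernel. When $k\ge -1$, Condition \eqref{meanBW} gives $Pf=P_{\mod}f$ and $P^+f=P^+_{\mod}f$ (the subtracted terms $|K(z,i)|\int f\,dV$ and $K(z,i)\int f\,dV$ vanish), so by the triangle inequality and Fubini
$$\int_{\C_+}|P^+_{\mod}f(z)|\,\omega^k(z)\,dV(z)\le\int_{\C_+}|f(\zeta)|\,J_k(\zeta)\,dV(\zeta),\qquad J_k(\zeta):=\int_{\C_+}|K^+_{\mod}(z,\zeta)|\,\omega^k(z)\,dV(z),$$
and likewise for $P_{\mod}$ with $K_{\mod}$ in place of $K^+_{\mod}$, the two behaving identically by Remark \ref{LemmaKmod}. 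The whole proposition then follows from the pointwise bounds, valid for $\zeta=\xi+i\lambda$ with $|\zeta-i|>1$, namely $J_k(\zeta)\lesssim\omega^{k+1}(\zeta)$ when $k>-1$ and $J_{-1}(\zeta)\lesssim 1+\ln_+(\ln_+(1/\lambda))+\ln_+(\ln_+|\zeta|)$ when $k=-1$; on the compact set $\{|\zeta-i|\le 1\}$ all these quantities are bounded and comparable by continuity. Inserting these bounds turns $\int|f|\,J_k\,dV$ into exactly Conditions \eqref{logBW} and \eqref{logBW1}.

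The estimate of $J_k$ is a weighted refinement of Lemma \ref{ineq-fund}, and I would run the same region decomposition used in its proof, together with that of Lemma \ref{far}. There are two genuine contributions: a boundary piece, where $|z-\overline\zeta|^{-2}$ dominates and $z$ stays near the vertical segment above $\xi$ so that $\omega(z)\simeq 1+\ln_+(1/y)$, and a far piece supported where $|z+i|^{-2}$ dominates and $\omega(z)\simeq 1+\ln_+|z|$; the transition and tail regions, where the two terms of the modified kernel nearly cancel, are controlled exactly as in the proof of Lemma \ref{far} after inserting the weight, and contribute at most a constant multiple of $\omega^{k+1}(\zeta)$. Performing the $z$-integration reduces the boundary piece to $\int_\lambda^{c}(1+\ln(1/y))^k\,\tfrac{dy}{y}$ and the far piece to $\int_1^{|\zeta|}(1+\ln r)^k\,\tfrac{dr}{r}$; the substitutions $u=\ln(1/y)$ and $u=\ln r$ turn both into $\int_0^{U}(1+u)^k\,du$ with $U\simeq\ln_+(1/\lambda)$, respectively $U\simeq\ln_+|\zeta|$. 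This one–dimensional integral produces the entire trichotomy: it is $\simeq U^{k+1}$ for $k>-1$, giving the two summands $(\ln_+(1/\lambda))^{k+1}$ and $(\ln_+|\zeta|)^{k+1}$, whose sum is $\lesssim\omega^{k+1}(\zeta)$; it is $\simeq\ln(1+U)$ for $k=-1$, giving the iterated logarithms of \eqref{logBW1}; and it is $\le\int_0^\infty(1+u)^k\,du<\infty$ for $k<-1$. In this last case the unmodified kernel is already uniformly integrable against $\omega^k\,dV$ over all of $\C_+$, so no subtraction and no mean-zero hypothesis are needed, which is precisely the third item.

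I expect the main obstacle to be bookkeeping rather than a new idea, and its nature depends on the sign of $k$. For $k>0$ one splits $(a+b)^k\lesssim a^k+b^k$ on each region and checks that the non-dominant logarithm contributes only $O(\omega(\zeta))$, which is $\le\omega^{k+1}(\zeta)$ since $k+1>1$; for $-1<k<0$ one cannot split this way, and must instead use that $\omega^k$ is decreasing, bounding $\omega(z)^k$ by $(1+\text{dominant term})^k$ on each region, which only improves the estimate, and then combining the two regional contributions via $a^{k+1}+b^{k+1}\le 2\max(a,b)^{k+1}\lesssim\omega^{k+1}(\zeta)$. The genuinely delicate point is the critical exponent $k=-1$, where the one–dimensional integral switches from power to logarithmic growth and must be matched precisely to \eqref{logBW1}, together with confirming that the cancellation regions of Lemma \ref{far} still integrate to the required size once the weight is inserted.
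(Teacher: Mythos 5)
Your proposal follows essentially the same route as the paper's own proof: reduce via \eqref{meanBW} to the modified kernel, apply Fubini to bound everything by the weighted kernel integral $J_k(\zeta)$, estimate $J_k$ by adapting the regional decomposition of Lemmas \ref{ineq-fund} and \ref{far}, and let the one-dimensional integral $\int_0^U(1+u)^k\,du$ produce the trichotomy --- this is exactly the paper's elementary inequality \eqref{logW} after the substitution $u=\ln s$ --- with the case $k<-1$ handled, as in the paper, by uniform integrability of the unmodified kernel so that no mean-zero hypothesis is needed.

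One slip should be repaired. The set $\{|\zeta-i|\le 1\}$ is \emph{not} a compact subset of $\C_+$ (it is tangent to the real axis at the origin), and the quantities involved are \emph{not} bounded on it: for $\zeta=\epsilon i$ with $\epsilon$ small one has $J_k(\zeta)\simeq(\ln(1/\epsilon))^{k+1}\to\infty$ when $k>-1$. So the sentence disposing of that region ``by continuity'' is false as stated. The needed bound $J_k(\zeta)\lesssim\omega^{k+1}(\zeta)$ does hold there, and your own boundary-piece analysis proves it; the point is that the restriction $|\zeta-i|>1$ in Lemma \ref{ineq-fund} is only relevant for the \emph{lower} bound (which degenerates as $\zeta\to i$ since $K_{\mod}(\cdot,\zeta)\to 0$), not for the upper bound you are using. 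The fix is simply to run your regional decomposition for all $\zeta\in\C_+$, rather than splitting off $\{|\zeta-i|\le1\}$ and invoking compactness.
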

The proof is analogous to the sufficiency in Proposition \ref{Bergman}. In the two first cases, we already know that \eqref{meanBW} is a necessary condition, so that we assume they are satisfied and the kernel can be replaced by the modified kernel. One adapts easily Lemma \ref{ineq-fund}. The main difference is the fact that the function $t\mapsto \int_2^t y^{-1} (\ln y)^{k} dy $ replace now the logarithm. More precisely, we use the  elementary inequality
\begin{equation}
\int_2^t \frac {\ln(s)^k}{s }ds \lesssim\left\{
    \begin{array}{lll}
        c_k & \mbox{if } k<-1, \\
        \ln(\ln(t)  & \mbox{if } k=-1,\\
        c_k(\ln (t))^{1+k } & \mbox{if } k>-1. 
    \end{array} \right. \label{logW}
    \end{equation}
 Details are given in \cite{BGS2} for $k=-1.$ For $k<-1$, the kernel $K(\cdot, \zeta)$ belongs to $L^1_{\omega^k}(\C_+)$ uniformly in $\zeta$ hence $Pf\in L^1_{\omega^k}(\C_+)$ for any $f\in L^1(\C_+).$

\smallskip

\medskip

\section{Bloch functions, multipliers, duality}

Let us recall that, for $k\in \mathbb{R}$, the weighted Bloch space $\mathcal{B}_{\omega^{k}}(\mathbb{C}_+)$ is the space of equivalent classes of holomorphic functions which satisfy
$$\sup_{z\in\C_+} \left(\Im m(z)\right)\omega^{k}(z)|f'(z)|< \infty.$$ As we will consider multipliers on these spaces, we will use functions in $\mathcal B_{\omega^k}$ instead of equivalent classes with the norm
\begin{equation} \label{eq:wbloch}
  \|f\|_{\mathcal B_{\omega^{k}}} :=|f(i)|+\sup_{z\in\C_+} \left(\Im m(z)\right)\omega^{k}(z)|f'(z)|< \infty.
\end{equation} 
Remark that we only defined a semi-norm on $\mathcal B_{\omega^k}$ in formula \eqref{semi}.

We start by observing the following fact.
\begin{lemma}\label{lem:pointwisebloch}
Let $k\in \R$. Then there is a constant $C_k>0$ such that for any $f\in \mathcal{B}_{\omega^{k}}(\mathbb{C}_+)$ of norm $1$,
$$
|f(z)| \leq\left\{
    \begin{array}{lll}
      C_k & \mbox{if } k>1, \\
        C_{-1} \ln(\omega(z))  & \mbox{if } k=1,\\
        C_k (\omega(z))^{1-k } & \mbox{if } k<1.
    \end{array}
\right.$$

\end{lemma}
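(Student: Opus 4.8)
The plan is to bound $|f(z)|$ by integrating $f'$ along a well-chosen path from $i$ to $z$. Since $\|f\|_{\mathcal B_{\omega^k}}=1$, we have $|f(i)|\le 1$ together with the pointwise bound $|f'(w)|\le (\Im m w)^{-1}\omega^{-k}(w)$ for every $w\in\C_+$. Hence, for any piecewise $C^1$ path $\gamma$ joining $i$ to $z$,
$$|f(z)|\le |f(i)|+\int_\gamma |f'(w)|\,|dw|\le 1+\int_\gamma \frac{|dw|}{(\Im m w)\,\omega^k(w)},$$
and everything reduces to choosing $\gamma$ and estimating this weighted length.

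Writing $z=x+iy$ and $Y:=\max(1,|z|)$, I would take $\gamma$ to be the concatenation of three segments: the vertical segment $A$ from $i$ up to $iY$, the horizontal segment $B$ from $iY$ to $x+iY$, and the vertical segment $C$ from $x+iY$ down to $z=x+iy$ (note $y\le|z|\le Y$, so $C$ is a genuine descent). The guiding idea is that $A$ captures the growth coming from $\ln_+|z|$ (behaviour at infinity), $C$ captures the growth coming from $\ln_+(1/\Im m z)$ (boundary behaviour), and $B$ is a cheap connector. On $A$, $w=it$ with $1\le t\le Y$, so $\omega(it)=1+\ln t$ and the contribution is $\int_1^Y \frac{dt}{t(1+\ln t)^k}$; after $u=1+\ln t$ this is exactly the elementary integral \eqref{logW} read with $k$ replaced by $-k$, of upper limit $L=1+\ln Y$. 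On $B$, $\Im m w=Y\ge|x|$ and $\omega\ge 1$, so the contribution is at most $|x|/Y\le 1$. Collecting these and applying the three cases of \eqref{logW} with $L=1+\ln Y=1+\ln_+|z|\le\omega(z)$ and, for the boundary part below, $L=1+\ln(1/y)=1+\ln_+(1/y)\le\omega(z)$, produces the three regimes $C_k$, $\ln\omega(z)$, $\omega(z)^{1-k}$ for $k>1$, $k=1$, $k<1$ respectively; since $\omega\ge 1$, the term $|f(i)|\le 1$ is always dominated by the claimed right-hand side.

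The heart of the matter is the descent segment $C$, where $w=x+it$. The key observation, which decouples the two logarithmic terms of $\omega$, is that $|w|=\sqrt{x^2+t^2}\ge t$, so $\omega(x+it)\ge 1+\ln_+(1/t)+\ln_+ t=1+|\ln t|$ independently of the horizontal position $x$. For $k\ge 0$ the contribution of $C$ is therefore at most $\int_y^Y \frac{dt}{t(1+|\ln t|)^k}$, which splits at $t=1$ into an ascending part governed by $1+\ln Y$ and a descending part governed by $1+\ln(1/y)$, both of the form \eqref{logW}. For $k<0$ the weight enters with a positive power, so I would instead bound $\omega$ from above along $\gamma$: on all three segments $\omega(w)\lesssim\omega(z)$ (using $|w|\lesssim Y$ and $\Im m w\ge y$), so each segment contributes at most $\omega(z)^{-k}$ times the plain length $\int\frac{|dw|}{\Im m w}\lesssim \ln Y+\ln(1/y)\lesssim\omega(z)$, reproducing the bound $\omega(z)^{1-k}$ in the range $k<0\subset\{k<1\}$.

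I expect the main obstacle to be precisely this coupling of the two logarithmic terms in $\omega$: a naive vertical descent to the boundary appears to lose the decay supplied by $\ln_+|z|$, and the resolution is the elementary inequality $|x+it|\ge t$, which guarantees the full weight $1+|\ln t|$ on every vertical segment regardless of $x$ (and forces the preliminary ascent along the imaginary axis up to height $Y\ge|x|$, so that the horizontal connector $B$ stays negligible). Once this is in place, the rest is bookkeeping with \eqref{logW} in the three cases and the elementary comparisons $1+\ln_+|z|\le\omega(z)$ and $1+\ln_+(1/y)\le\omega(z)$.
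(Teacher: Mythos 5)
Your proof is correct and follows essentially the same route as the paper's: both arguments bound $|f(z)-f(i)|$ by integrating $|f'(w)|\le (\Im m\, w)^{-1}\omega^{-k}(w)$ along a piecewise horizontal/vertical path from $i$ to $z$ and then reduce each segment to the elementary integral \eqref{logW}. The only cosmetic difference is that the paper splits into the cases $|x|\le y$ and $|x|\ge y$ with the horizontal connector at heights $y$ and $|x|$ respectively, while you use a single path with connector at height $\max(1,|z|)$; your key inequality $|x+it|\ge t$, which gives $\omega(x+it)\ge 1+|\ln t|$ on vertical segments, is precisely what makes the paper's descent segments work as well.
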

Remark that we have the equivalence 
$$1+\ln (\omega(z))\simeq 1+ \ln_+(\ln_+(1/y))+\ln_+(\ln_+(|z|))$$
and recognize the quantity that appears in \eqref{logBW1}.

\begin{proof}

We write, for  $|x|\le y$,
$$|f(z)-f(i)|\leq |f(x+iy)-f(iy)|+|f(i y)-f(i)|$$
and, for $|x|\geq y$
\begin{align*}|f(z)-f(i)|\leq & |f(x+iy)-f(x+i|x|)|+|f(x+i|x|)-f(i|x|)|\\&+|f(i|x|)-f(iy)|+|f(iy)-f(i)|.\end{align*}
We bound all differences by integrals of derivatives. For instance, we get for  $|x|\le y$,
\begin{eqnarray*}
|f(z)-f(i)|&\le& \int_0^{|x|}|f'(t+iy)|dt+\int |f'(is)|\chi_{s\in (1,y)}ds \\
&\le& \frac 1y\int_0^{|x|}\omega^{-k}(t+iy)dt+\int \frac{ \omega^{-k}(is)}{s}\chi_{s\in (1,y)} ds. 
\end{eqnarray*}
and use the inequalities \eqref{logW} to conclude. 
\end{proof}
Critical examples are given by $z\mapsto 1$ when $k>1,$ by $z\mapsto\log \log (4i+z)$ when $k=1$, by $z\mapsto (\log(4i+z))^{1-k}$ when $k<1.$ For these last examples we use the fact that $\log (4i+z)$ has a positive real part, so that its logarithm is well defined as a holomorphic function.

These estimates may be used to give two kinds of estimates: first on pointwise products of functions that are respectively in the Bergman space $A_{\omega^l}^1(\C_+)$ and  in the weighted Bloch space $\mathcal{B}_{\omega^{k}}$, secondly for multipliers of the weighted Bloch spaces.

\subsection{Pointwise products}
As a direct consequence, we have the following corollary.
\begin{corollary}\label{emb-weak} 
Let $k<1$, and $l\in\mathbb{R}$. The pointwise product of a function  in the Bergman space $A_{\omega^l}^1(\C_+)$ and a function in the weighted Bloch space $\mathcal{B}_{\omega^{k}}(\mathbb{C}_+)$ belongs to the space $A^1_{\omega^{l+k-1}}(\C_+).$ Moreover, for $f\in A_{\omega^l}^1(\C_+)$ and $g\in\mathcal{B}_{\omega^{k}}(\mathbb{C}_+),$ one has
$$\|fg\|_{A^1_{\omega^{l+k-1}}}\lesssim \|f\|_{A_{\omega^l}^1}\|g\|_{\mathcal{B}_{\omega^{k}}}.$$
\end{corollary}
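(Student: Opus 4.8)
The plan is to deduce this directly from the pointwise bound of Lemma~\ref{lem:pointwisebloch}, so that the corollary really is, as announced, an immediate consequence. Since $k<1$, that lemma---after homogenizing it from the stated norm-one case to a general $g$ by scaling---gives the estimate $|g(z)|\lesssim \|g\|_{\mathcal{B}_{\omega^{k}}}\,\omega(z)^{1-k}$ valid for every $z\in\C_+$. The structural fact that makes the computation close up cleanly is that $\omega$ takes values in $[1,\infty)$, so arbitrary real powers of $\omega$ are well-defined positive functions that multiply by adding their exponents.

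With this pointwise control in hand I would simply write out the weighted $L^1$-norm of the product and insert the bound on $|g|$:
$$\|fg\|_{A^1_{\omega^{l+k-1}}}=\int_{\C_+}|f(z)|\,|g(z)|\,\omega(z)^{l+k-1}\,dV(z)\lesssim \|g\|_{\mathcal{B}_{\omega^{k}}}\int_{\C_+}|f(z)|\,\omega(z)^{1-k}\,\omega(z)^{l+k-1}\,dV(z).$$
The two exponents of $\omega$ now add to $(1-k)+(l+k-1)=l$, so the right-hand side equals $\|g\|_{\mathcal{B}_{\omega^{k}}}\int_{\C_+}|f(z)|\,\omega(z)^{l}\,dV(z)=\|g\|_{\mathcal{B}_{\omega^{k}}}\,\|f\|_{A_{\omega^l}^1}$, which is precisely the claimed inequality. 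Holomorphy of $fg$ is automatic, both factors being holomorphic, so the finiteness of this integral places $fg$ in $A^1_{\omega^{l+k-1}}(\C_+)$.

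There is essentially no genuine obstacle here; the only points requiring a moment's care are that the lemma's pointwise estimate must first be rescaled to arbitrary $g$, and that one must have the restriction $k<1$ so that the relevant branch of Lemma~\ref{lem:pointwisebloch} (the polynomial-in-$\omega$ growth) applies. The heart of the matter is the arithmetic identity $(1-k)+(l+k-1)=l$, which is exactly what explains why the product lands in the weighted Bergman space with exponent $l+k-1$ and not in some other space.
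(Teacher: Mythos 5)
Your proof is correct and is precisely the argument the paper intends: the corollary is stated there as a ``direct consequence'' of Lemma~\ref{lem:pointwisebloch}, namely inserting the pointwise bound $|g(z)|\lesssim \|g\|_{\mathcal{B}_{\omega^{k}}}\,\omega(z)^{1-k}$ (valid since $k<1$) into the weighted integral and using $(1-k)+(l+k-1)=l$. Your added remarks on rescaling the norm-one case and on $\omega\ge 1$ are exactly the right points of care, so nothing is missing.
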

\medskip

Remark that multiplication by  a function $ b\in\mathcal{B}_{\omega^{k}}(\mathbb{C}_+)$, for $k>1,$ preserves all spaces $A_{\omega^l}^1(\C_+)$ since $b$ is bounded.  One could also write a statement for multiplication by a function in $\mathcal{B}_{\omega}(\mathbb{C}_+)$. It would involve a logarithm.

We will see that the result given here is optimal.

\subsection{Multipliers} 
We start by recalling that a holomorphic function $b$ is a pointwise multiplier of the Bloch space $\mathcal{B}(\mathbb{C}_+)$ if and only if $b$ is bounded and belong to $\mathcal{B}_{\omega}(\mathbb{C}_+)$ (see  \cite{BGS1}). The fact that multipliers satisfy a logarithmic condition at infinity was remarked by Nakai \cite{Nakai} in the case of $BMO.$

We would like to extend this result to all $\mathcal{B}_{\omega^{k}}(\mathbb{C}_+)$, $k>-1$ and give other characterizations of multipliers between different different spaces of this type. For $\Omega$ a positive function on $\mathbb{C}_+$, we introduce the set $H_{\Omega}^{\infty}(\mathbb{C}_+)$ of all holomorphic functions $f$ such that
\begin{equation}\label{eq:hinfty}
\|f\|_{H_{\Omega}^{\infty}}:=\sup_{z\in \mathbb{C}_+}|f(z)|\Omega(z)<\infty.
\end{equation}
We note that endowed with the norm $\|\cdot\|_{H_{\Omega}^{\infty}}$, $H_{\Omega}^{\infty}(\mathbb{C}_+)$ is a Banach space.
\medskip

Our result on the pointwise multipliers between different weight Bloch spaces is as follows.
\begin{proposition}
Let $j\leq k$. The space of pointwise multipliers from the weighted Bloch space $\mathcal{B} _{\omega^{k}}(\mathbb{C}_+)$ to the weighted Bloch space $\mathcal{B}_{\omega^{j}}(\mathbb{C}_+)$ consists of, 
\begin{enumerate}
\item when $k>1$, functions in $\mathcal{B}_{\omega^{j}}(\mathbb{C}_+)$;
\item when $k<1$, functions in the class $\mathcal{B}_{\omega^{j+1-k}}(\mathbb{C}_+)\cap H_{\omega^{j-k}}^{\infty}(\mathbb{C}_+)$;
\item when $k=1$,
 functions in $H_{\omega^{j-1}}^{\infty}(\mathbb{C}_+)$
 satisfying the condition
\begin{equation}\label{eq:loglogmult}
\sup_{z\in\C_+} \left(\Im m(z)\right)\omega^j(z)\left(1+\ln\left(\omega(z)\right)\right)|f'(z)|< \infty.
\end{equation} 

\end{enumerate}
\end{proposition}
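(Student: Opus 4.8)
The plan is to reduce everything to the product rule $(bf)'=b'f+bf'$ together with the pointwise growth estimates of Lemma \ref{lem:pointwisebloch}. For a candidate multiplier $b$ and $f\in\mathcal B_{\omega^k}$ of norm $1$, I would estimate
\[ (\Im m z)\,\omega^j(z)\,|(bf)'(z)|\le (\Im m z)\,\omega^j(z)\,|b'(z)|\,|f(z)|+(\Im m z)\,\omega^j(z)\,|b(z)|\,|f'(z)|, \]
bound $|f(z)|$ by Lemma \ref{lem:pointwisebloch} and use $(\Im m z)|f'(z)|\le\omega^{-k}(z)$. The second term is then $\le\omega^{j-k}(z)|b(z)|$, so it is controlled exactly by the condition $b\in H^\infty_{\omega^{j-k}}$. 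For the first term, inserting the three bounds of Lemma \ref{lem:pointwisebloch} produces, respectively, $(\Im m z)\omega^j|b'|$ when $k>1$, $(\Im m z)\omega^{j+1-k}|b'|$ when $k<1$, and $(\Im m z)\omega^j(1+\ln\omega)|b'|$ when $k=1$; finiteness of the supremum is precisely membership in $\mathcal B_{\omega^j}$, in $\mathcal B_{\omega^{j+1-k}}$, and condition \eqref{eq:loglogmult}. In the case $k>1$ one checks, using the pointwise bound on $b$ coming from $b\in\mathcal B_{\omega^j}$, that the term $\omega^{j-k}|b|$ is automatically bounded, so that $b\in\mathcal B_{\omega^j}$ alone suffices; this gives sufficiency in all three cases (the anchor term $|b(i)f(i)|$ in the target norm is routine, $b(i)$ being a fixed finite value).

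For necessity, the case $k>1$ is immediate: since $1\in\mathcal B_{\omega^k}$, testing the multiplier on $f\equiv1$ gives $b=b\cdot1\in\mathcal B_{\omega^j}$. When $k\le1$ I would extract the two conditions by testing on two families of extremal functions, one for each term of the product rule. To obtain the $H^\infty_{\omega^{j-k}}$ bound I would, for each $z_0=\xi+i\lambda$, produce a \emph{derivative-dominant} test function $g_{z_0}$ with $\|g_{z_0}\|_{\mathcal B_{\omega^k}}\lesssim1$, $g_{z_0}(z_0)=0$ and $(\Im m z_0)|g_{z_0}'(z_0)|\simeq\omega^{-k}(z_0)$; since $g_{z_0}$ vanishes at $z_0$, the multiplier inequality at $z_0$ reduces to $(\Im m z_0)\omega^j(z_0)|b(z_0)|\,|g_{z_0}'(z_0)|\le C$, that is $\omega^{j-k}(z_0)|b(z_0)|\le C'$. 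To obtain the Bloch-type condition I would use a \emph{value-dominant} function $F_{z_0}$ with $\|F_{z_0}\|_{\mathcal B_{\omega^k}}\lesssim1$ and $|F_{z_0}(z_0)|$ of the maximal size allowed by Lemma \ref{lem:pointwisebloch} (namely $\omega^{1-k}(z_0)$ if $k<1$, and $1+\ln\omega(z_0)$ if $k=1$). Feeding $F_{z_0}$ into the multiplier inequality, the cross term $(\Im m z_0)\omega^j(z_0)|b(z_0)F_{z_0}'(z_0)|$ is automatically $\lesssim1$ by the \emph{already established} $H^\infty_{\omega^{j-k}}$ bound on $b$ together with $(\Im m z_0)|F_{z_0}'(z_0)|\le\omega^{-k}(z_0)$; hence the remaining term yields $(\Im m z_0)\omega^{j+1-k}(z_0)|b'(z_0)|\le C$ (respectively \eqref{eq:loglogmult}). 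Thus the order of the argument is forced: first $H^\infty$, then Bloch.

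A preliminary lemma I would establish and use throughout is that $\omega$ is \emph{slowly varying in the hyperbolic metric}: $\omega(z)\simeq\omega(w)$ whenever the hyperbolic distance $d(z,w)$ is bounded. This is what makes both families of test functions have the claimed norms, and it also explains why the two conditions are not independent: for $j<k$, applying Lemma \ref{lem:pointwisebloch} with exponent $j+1-k<1$ gives $|b(z)|\lesssim\omega^{k-j}(z)$, so that $\mathcal B_{\omega^{j+1-k}}\subset H^\infty_{\omega^{j-k}}$ and the $H^\infty$ condition is only an \emph{extra} constraint on the critical diagonal $j=k$ (where $j+1-k=1$); I would remark on this to keep the intersection in the statement meaningful.

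The step I expect to be the main obstacle is the construction of the derivative-dominant functions $g_{z_0}$ with uniformly bounded \emph{full} norm, the delicate point being the anchor value $|g_{z_0}(i)|$. The natural choice $g_{z_0}(z)=\omega^{-k}(z_0)\,\lambda(z-z_0)/(z-\overline{z_0})^2$ has $g_{z_0}(z_0)=0$, the right derivative, and semi-norm $\lesssim1$, and its anchor is small for $z_0$ near the boundary; but for $z_0$ deep in the cone near infinity (where $i$ lies inside the hyperbolic support of this reproducing-type bump) the anchor is of size $\omega^{-k}(z_0)$, which is large when $k<0$. Handling this regime is the crux: I would either localize more sharply at infinity, or exploit the near-invariance $\omega(-1/z)\simeq\omega(z)$ of the weight under $z\mapsto-1/z$ to interchange the boundary and infinity regimes and transport the boundary construction, in each case verifying that semi-norm and anchor together stay uniformly bounded. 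Checking sharpness of Lemma \ref{lem:pointwisebloch} at boundary points (the value-dominant family $F_{z_0}$, via anchored functions such as powers of $\log$ of a suitable Möbius image of $z_0$) is of the same flavor but easier, since there no vanishing at $z_0$ is required.
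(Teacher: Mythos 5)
Your sufficiency argument and your treatment of the case $k>1$ coincide with the paper's, and for the necessity when $k\le 1$ you also follow the paper's order: first the $H^\infty_{\omega^{j-k}}$ bound, then the Bloch-type bound, with the cross term absorbed by the already-established $H^\infty$ bound. Your ``value-dominant'' family $F_{z_0}$ is exactly the paper's Lemma \ref{lem: testTheta} (the functions $\theta_w^{1-k}$, resp.\ $\log\theta_w$ when $k=1$, with $w=i$ or $w=z_0$ according to whether $\omega(z_0)\simeq\ln_+|z_0|$ or $\omega(z_0)\simeq\ln_+(1/\Im m z_0)$). The genuine divergence --- and the genuine gap --- is in how you obtain the $H^\infty$ bound. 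You require a second, ``derivative-dominant'' family $g_{z_0}$ with $g_{z_0}(z_0)=0$, uniformly bounded \emph{full} norm and $(\Im m z_0)|g_{z_0}'(z_0)|\simeq\omega^{-k}(z_0)$, and you yourself show that the natural candidate $\omega^{-k}(z_0)\lambda(z-z_0)/(z-\overline{z_0})^2$ fails when $k<0$: its value at $i$ is of size $\omega^{|k|}(z_0)$ for $z_0=iR$, $R$ large. As written, this step is not a proof; the construction is left as ``either localize more sharply, or use the inversion $z\mapsto-1/z$'', with neither alternative carried out, so the key new object of your argument does not yet exist.

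The paper avoids this difficulty entirely, and that is the main point of comparison: no test function vanishing at $z_0$ is needed. Since $b$ is a multiplier, $bf\in\mathcal{B}_{\omega^j}$ with $\|bf\|_{\mathcal{B}_{\omega^j}}\lesssim\|f\|_{\mathcal{B}_{\omega^k}}$, so Lemma \ref{lem:pointwisebloch} applied to the \emph{product} gives the pointwise bound $|b(z)f(z)|\le C_j\|f\|_{\mathcal{B}_{\omega^k}}\,\omega(z)^{1-j}$ (this is \eqref{eq:testpointwise} in the paper); plugging in the same value-dominant test function, with $|f(z_0)|\gtrsim\omega(z_0)^{1-k}$, immediately yields $\omega^{j-k}(z_0)|b(z_0)|\lesssim 1$. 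Thus a single family of test functions (Lemma \ref{lem: testTheta}) serves both steps, and the ``crux'' you anticipate simply does not arise. If you prefer to keep your route, the inversion idea is sound: $\omega(-1/z)\simeq\omega(z)$, the point $i$ is fixed by $z\mapsto-1/z$, and the weighted Bloch semi-norm is invariant (up to constants) under precomposition with this map, so the problematic interior points near infinity transport to boundary points near $0$ where your candidate works; but the semi-norm and anchor bounds after transport must actually be verified before this counts as a proof. Finally, your side remark that $\mathcal{B}_{\omega^{j+1-k}}\subset H^\infty_{\omega^{j-k}}$ when $j<k$ (so the $H^\infty$ condition is only binding on the diagonal $j=k$) is correct and a nice complement to the statement, which the paper does not make explicit.
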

\begin{proof}
Let $f\in \mathcal B_{\omega^k}$. We write $(fb)'=f'b+b'f$. Using the definition of Bloch spaces, the proof of the sufficient condition follows easily from Lemma \ref{lem:pointwisebloch} and elementary arguments. 

For the necessary condition, we first observe that, testing on the function $f=1$, the condition $b\in\mathcal B_{\omega^j}$ is necessary. It ends the proof in the case $k>1$. 

So we only need to concentrate on the cases $k<1$ or $k=1$ and points for which $\omega(z)\simeq \ln_+(|z|)$ or $\omega(z)\simeq \ln_+(1/y)$.

Let us explain how we will proceed for the proof. Assuming  that $b$ is multiplier from $\mathcal{B}_{\omega^{k}}(\C_+)$ to $\mathcal{B}_{\omega^{j}}(\C_+)$ , with $k<1$, we first prove that $b$ necessarily belongs to $H_{\omega^{j-k}}^{\infty}(\mathbb{C}_+)$ . From Lemma \ref{lem:pointwisebloch}, we know that there is a constant $C_j>0$ such that for any $f\in \mathcal{B}_{\omega^{k}}(\C_+)$,
 \begin{equation}\label{eq:testpointwise}
 |f(z)b(z)|\le C_j\|f\|_{\mathcal{B}_{\omega^{k}}}\omega(z)^{1-j}.
 \end{equation}
 We need to find  test functions $f \in \mathcal{B}_{\omega^{k}}$ (which may depend on the point $z$) with norm $1$ such that $|f(z)|\geq c_k\omega(z)^{1-k}$. This will  imply  that $\omega^{j-k}(z)|b(z)|\leq c_k^{-1} C_j$. Once we get this bound on $b$, we have $|yb'(z)||f(z)|\leq C \omega(z)^{-j}$ from the definition of a multiplier. So it is sufficient to find  test functions $f\in \mathcal{B}_{\omega^{k}}$ such that $|f(z)|\gtrsim \omega(z)^{-k+1}.$ The construction of such test functions is given by the following lemma.

 \begin{lemma} \label{lem: testTheta}
 We consider the function 
 $$(z, w)\mapsto \theta_w(z):= 1-\log(z-\bar{w})+\ln|i+w|+ 2\log(i+z).$$
 It has the following properties.
 \begin{itemize}
 \item [(i)]
 $\Re e (\theta_w)>1-\ln 2+\ln (|i+z|),$ so that, in particular, its logarithm and its powers are well defined.
 \item [(ii)]
 $$|\theta_w(z)|\simeq 1+\ln_+(|z|) +\ln_+\frac 1{|z-\overline w|}$$
 with constants that do not depend on $w\in\mathbb C_+$.
 \item [(iii)]
   For all $k<1$,  the function $\theta_w^{-k+1}$ belongs to $\mathcal{B}_{\omega^{k}}(\C_+)$ and $$\|\theta_w^{-k+1}\|_{\mathcal{B}_{\omega^{k}}}\simeq 1$$ 
 with constants that do not depend of $w$.
 \item   [(iv)]
 The function $\log(\theta_w)$ belongs to $\mathcal{B}_{\omega}(\C_+)$ and  $$\|\log(\theta_w)\|_{\mathcal{B}_{\omega}}\simeq 1$$
  with constants that do not depend on $w$.
 \end{itemize}
 \end{lemma}
 
{\sl End of the proof of the necessary condition.} Let us assume this lemma proved and finish the proof when $k<1$. We first prove the estimate when $\omega(z)\simeq \ln_+(|z|).$ It is sufficient to take as test function the function $\theta_w$ with $w=i.$ Indeed, with this choice, we know from (ii) that $|\theta_w|\simeq \omega(z)$ and we conclude. Next, assuming that $\omega(z)\simeq \ln_+(1/y),$ $z=x+iy$, we choose $w=z,$ so that $|z-\overline w|=2y.$ Again $|\theta_w|\simeq \omega(z)$ and we conclude in the same way. Once we have the boundeness of $b$, we conclude that $b$ is in $\mathcal B_{\omega}$ by testing in each case on the same functions.

 The proof when $k=1$ is identical, but by taking $\log (\theta_w)$ as test functions, with the same values of the parameter $w$. We do not give the details again.

\end{proof}

 \begin{proof}[of Lemma \ref{lem: testTheta}]
 We first prove assertion (i). From the definition of $\theta_w$, 
 $$\Re(\theta_w(z))=1-\ln |z-\overline w|+\ln|i+w| +2\ln |i+z|.$$
 As, for $z,w\in\C_+$, $$|z-\overline w|\le |i+z|+|i+w|,$$ one gets, from the inequality $(a+b)\le 2 ab$, $a,b\ge 1$, that 
 $$\ln |z-\overline w|< \ln|i+w|+\ln|i+z|+\ln 2$$  so that $\Re(\theta_w(z))\ge  1-\ln 2+\ln|i+z|$.
 
 Let us prove (ii). We first consider the case when $|z-\overline w|\ge 1$. 
Similarly as before, we have
 $$ \ln(|i+w|)\le \ln 2+\ln(|i+z|) +\ln(|z-\overline w|).$$ It gives $$\Re(\theta_w(z))\le 1+\ln(2) +3\ln |i+z|.$$
 From the preceding inequality, we have also
 $$\Re \theta_w(z)\gtrsim 1+\ln_+(|z|)$$
 so that
 $$|\theta_w(z)|\simeq 1+\ln_+(|z|)$$
 which is also equal to 
 $$1+\ln_+(|z|)+\ln_+\frac 1{|z-\overline w|}.$$ 
 
When  $|z-\overline w|\le 1$, $\ln|i+w|\simeq \ln |i+z|$, and 
 \begin{eqnarray*}
 \Re (\theta_w(z)) &=&1+\ln \frac 1{|z-\overline w|}+\ln|i+w| +2\ln |i+z|\\ 
 &\simeq& 1+\ln_+\frac 1{|z-\overline w|}+\ln_+(|z|).
 \end{eqnarray*}
It ends the proof of (ii).
 
Let us now prove (iii), that is,
$$\frac{y}{|z+i|} \left(\frac{|\theta_w(z)|}{\omega(z)}\right)^{-k}+\frac{y}{|z-\overline w|} \left(\frac{|\theta_w(z)|}{\omega(z)}\right)^{-k}\lesssim 1, \forall z=x+iy\in \C_+.$$
 For $k\le 0$ we conclude directly since, from (ii), $|\theta_w(z)|\lesssim \omega(z).$ It remains to consider the case $k>0.$
 The inequality $$ \frac{y}{|z+i|} \lesssim \left(\frac{|\theta_w(z)|}{\omega(z)}\right)^k$$ holds  when $\omega (z)\simeq 1$ or $\omega(z)\simeq\ln_+(|z|)$ since, thanks to (ii), 
  the right hand side is bounded below by a constant.  So, it is sufficient to consider the case when $\omega(z)\simeq \ln_+(1/y).$ Then, as 
 $$y\left(\ln_+(1/y)\right)^{k}\lesssim 1$$
 while both $|z+i|$ and $|\theta_w(z)|$ are bounded below, we get the result.

 We then prove that 
  $$ \frac{y}{|z-\overline w|} \lesssim \left(\frac{|\theta_w(z)|}{\omega(z)}\right)^k .$$ 
  The proof is analogous except when $\omega(z)\simeq \ln_+(1/y).$ In this case we remark that 
  $$y(\ln_+(1/y))^k\leq y(\ln(e+1/y))^k\leq |z-\overline w|\ln(e+1/|z-\overline w|)^k$$
 by monotonicity of the function $t\mapsto t(\ln (e+1/t))^k.$ The right hand side is bounded by $|z-\overline w||\theta_w(z)|^k$, which allows to conclude the proof of (iii).

 The proof of (iv) follows the same lines.
\end{proof}

 \subsection{Reproducing formula for $A^1_{\omega^{k}}$-functions}
 We first recall some notations. For $\Omega$ a weight, the space $L_\Omega^p(\mathbb{C}_+)$ is the $L^p$ space for the measure $\Omega dV$ and $A^p_\Omega(\C_+):= L_\Omega^p(\mathbb{C}_+)\cap \mathcal H(\C_+)$  the corresponding Bergman space. 
When $\Omega(z)=\left(\Im m(z)\right)^\alpha$, we recover the classical (weighted) Bergman space denoted by $A_\alpha^p(\mathbb{C}_+)$. This space is nontrivial only if $\alpha>-1$. Finally, for $\alpha>-1$, the Bergman projection $P_\alpha$ is the orthogonal projection from $L_\alpha^2(\mathbb{C}_+)$ onto its closed subspace $A_\alpha^2(\mathbb{C}_+)$. It is defined by $$P_\alpha f(z)=\int_{\mathbb{C}_+}K_\alpha(z,w)f(w) dV_\alpha(w)$$
where $$K_\alpha(z,w)=\frac{c_\alpha}{\left(z-\bar{w}\right)^{2+\alpha}}$$
is the Bergman kernel, $dV_\alpha(w)=(\Im m(w))^\alpha dV(w)$ and $c_\alpha$ is a constant that depends only on $\alpha$.
\medskip
 
 The aim of this section is to prove the following proposition.
 \begin{proposition}\label{cor:boundednessP1omegainverse}
For $k\in \mathbb{R}$, the Bergman projection $P_\alpha$, $\alpha>0$,  reproduces the functions in $A^1_{\omega^{k}}(\mathbb{C}_+)$.
\end{proposition}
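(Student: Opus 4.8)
The plan is to prove the reproducing identity $P_\alpha f=f$ for $f\in A^1_{\omega^{k}}(\C_+)$ by reducing it, through an approximation argument, to the classical reproducing property of $P_\alpha$ on $A^2_\alpha(\C_+)$, after first checking that the defining integral converges pointwise. Writing $z=x+iy$ and $w=u+iv$ in $\C_+$, one has $\Im m(z-\bar w)=y+v$, hence $|z-\bar w|\ge y>0$ and the kernel has no singularity; moreover
\[
|K_\alpha(z,w)|\,(\Im m(w))^\alpha=c_\alpha\,\frac{v^\alpha}{|z-\bar w|^{2+\alpha}}.
\]
I would show that this is $\lesssim C_z\,\omega^{k}(w)$ uniformly in $w$, with $C_z$ locally bounded in $z$. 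Indeed the ratio $R(w):=v^\alpha\,|z-\bar w|^{-(2+\alpha)}\,\omega^{-k}(w)$ is continuous on $\C_+$ and tends to $0$ both as $v\to0$ --- where, since $\alpha>0$, the factor $v^\alpha$ forces $R(w)\to0$ despite the logarithmic factor $\omega^{-k}(w)\simeq(\ln(1/v))^{-k}$ --- and as $|w|\to\infty$, where the decay $|w|^{-(2+\alpha)}$ dominates $(\ln|w|)^{-k}$; hence $R$ is bounded. Consequently the integral defining $P_\alpha f(z)$ converges absolutely and $|P_\alpha g(z)|\le C_z\|g\|_{A^1_{\omega^{k}}}$ for every $g\in A^1_{\omega^{k}}(\C_+)$, so $g\mapsto P_\alpha g$ is continuous from $A^1_{\omega^{k}}(\C_+)$ into $\mathcal H(\C_+)$ endowed with local uniform convergence. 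Note that the hypothesis $\alpha>0$ is used precisely at the boundary when $k<0$.

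Next I would record two soft facts. First, $P_\alpha$ reproduces every element of $A^2_\alpha(\C_+)$, since on $L^2_\alpha(\C_+)$ it is the orthogonal projection onto the closed subspace $A^2_\alpha(\C_+)$ and therefore fixes it. Second, point evaluations are continuous on $A^1_{\omega^{k}}(\C_+)$: applying the mean value property on the ball $B(z,y/2)$, on which $\omega^{k}$ is comparable to $\omega^{k}(z)$, yields $|f(z)|\lesssim C_z'\,\|f\|_{A^1_{\omega^{k}}}$. Thus convergence in $A^1_{\omega^{k}}(\C_+)$ implies local uniform convergence of the functions themselves.

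It then suffices to approximate. Given $f\in A^1_{\omega^{k}}(\C_+)$, I would set
\[
f_{\e,t}(z):=f(z+i\e)\,\psi_t(z),\qquad \psi_t(z):=\left(\frac{it}{z+it}\right)^{m},
\]
with $m$ a large integer. The translate $f(\cdot+i\e)$ is holomorphic and bounded near the real axis, while $\psi_t$ satisfies $|\psi_t|\le1$, decays like $|z|^{-m}$ at infinity, and tends to $1$ locally uniformly as $t\to\infty$; choosing $m$ large enough, $f_{\e,t}$ lies in $A^2_\alpha(\C_+)\cap A^1_{\omega^{k}}(\C_+)$. By the first soft fact, $P_\alpha f_{\e,t}=f_{\e,t}$. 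Letting $t\to\infty$ and then $\e\to0$ and invoking the two continuity statements above, one gets $P_\alpha f=f$, provided $f_{\e,t}\to f$ in $A^1_{\omega^{k}}(\C_+)$.

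I expect this last convergence to be the main obstacle. It amounts to showing
\[
\int_{\C_+}\bigl|f(z+i\e)\psi_t(z)-f(z)\bigr|\,\omega^{k}(z)\,dV(z)\longrightarrow0,
\]
that is, the continuity of vertical translation in $A^1_{\omega^{k}}(\C_+)$ combined with a dominated convergence argument for the cut-off $\psi_t\to1$. The subtlety is that $\omega^{k}\,dV$ is not translation invariant and, when $k<0$, the weight degenerates at the boundary exactly where the translation acts; controlling this requires bounding $|f(z+i\e)|$ by a local average of $|f|$ via subharmonicity and comparing $\omega(z+i\e)$ with $\omega(z)$ to produce an $\e$-uniform dominating function.
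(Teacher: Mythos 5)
Your preparatory steps are correct: the bound $v^\alpha|z-\bar w|^{-(2+\alpha)}\lesssim C_z\,\omega^{k}(w)$ (which indeed uses $\alpha>0$ when $k<0$), the continuity of point evaluations on $A^1_{\omega^{k}}(\C_+)$ via the mean value property, and the fact that $P_\alpha$ fixes $A^2_\alpha(\C_+)$ are all fine. But there is a genuine gap, and it sits exactly where you flag it: the convergence $f_{\e,t}\to f$ in $A^1_{\omega^{k}}(\C_+)$ is not a technical ``last step''--- it is the entire content of the proposition, and you do not prove it. What you have actually done is reduce the statement to an unproved density claim: that $A^2_\alpha\cap A^1_{\omega^k}$ (or $A^1$) is dense in $A^1_{\omega^{k}}(\C_+)$. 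The paper treats this as a separate, nontrivial result (Proposition \ref{prop:densityrhoomega}, whose proof is quoted from \cite{BGS1}), and moreover organizes the argument so as to need it only for negative powers: for $k>0$ it avoids approximation entirely via the embedding $A^1_{\omega^{k}}(\C_+)\hookrightarrow A^1(\C_+)$ and the known reproducing property on $A^1$; for $k<0$ it combines the density statement with the boundedness of $P_\alpha$ on the weighted $L^1$ space (Lemma \ref{lem:weightforellirudin} plus Fubini), so that $P_\alpha f_n=f_n\to f$ and $P_\alpha f_n\to P_\alpha f$ both hold in norm. Note that this bounded-projection route also spares you the pointwise machinery of your first paragraph.

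The sketch you give for filling the gap --- dominated convergence with an ``$\e$-uniform dominating function'' --- is not obviously workable and should not be presented as routine. The natural dominant $\sup_{0<\e<1}|f(\cdot+i\e)|$ is a vertical maximal function, and integrability of maximal functions of $L^1$-type data is precisely the kind of statement that fails in general (it is the theme of this very paper). Subharmonicity only gives $|f(z+i\e)|\lesssim (y+\e)^{-2}\omega^{-k}(z+i\e)\|f\|_{A^1_{\omega^k}}$, whose supremum over $\e$, multiplied by $\omega^{k}(z)$, is not an integrable majorant; and for $k>0$ the weight comparison degenerates, since $\omega(z)\simeq\ln(1/y)$ can be arbitrarily large while $\omega(z+i\e)\simeq\ln(1/\e)$ stays bounded, so $\omega^{k}(z)\,|f(z+i\e)|$ is not controlled by the quantity $\omega^{k}(z+i\e)|f(z+i\e)|$ that your hypothesis makes integrable. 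To complete your route honestly you would need either a genuine maximal-function estimate in $L^1(\omega^{k}dV)$ or the paper's strategy (density proved by other means, plus boundedness of $P_\alpha$ on $L^1_{\omega^{k}}$); in either case the missing step is the hard part of the proof, not a verification.
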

We need preliminary results.
 
Let us start with the following estimates that generalize the ones from \cite{BGS1}, and deal with this kind of kernels. The proof follows the lines of \cite{BGS1}.
 \begin{lemma}\label{lem:weightforellirudin}
Let $\alpha>0$, $\beta>-1$, and $k,s\in \R$. Let $$\Omega(z)=\left(\ln^{\varepsilon_1}(e+|z|)+\ln^{\varepsilon_2}\left(e+\frac{1}{\Im mz}\right)\right)^{ k}\left[\ln\left(e+\ln^{\varepsilon_3}(e+|z|)+\ln^{\varepsilon_4}\left(e+\frac{1}{\Im mz}\right)\right)\right]^{s},$$
$\varepsilon_j\in\{0,1\}$, $j=1,2,3,4$. Then there is a constant $C=C_{\alpha,\beta}>0$ such that for any $z_0\in \mathbb{C}_+$,
\begin{equation}\label{eq:weightforellirudin1}
    \int_{\mathbb{C}_+}\frac{\Omega(z)dV_\beta(z)}{|z-\overline z_0|^{2+\alpha+\beta}}\le C(\Im m(z_0))^{-\alpha}\Omega(z_0).
\end{equation}
\end{lemma}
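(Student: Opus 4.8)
The plan is to run a dyadic decomposition adapted to the (unweighted) Forelli--Rudin estimate
$$\int_{\C_+}\frac{(\Im m z)^\beta}{|z-\overline{z_0}|^{2+\alpha+\beta}}\,dV(z)\simeq (\Im m z_0)^{-\alpha},\qquad \alpha>0,\ \beta>-1,$$
and to control the slowly varying weight $\Omega$ piece by piece. Write $z_0=x_0+iy_0$, $z=x+iy$ and note that $|z-\overline{z_0}|^2=(x-x_0)^2+(y+y_0)^2\ge y_0^2$, so the kernel concentrates where $|z-\overline{z_0}|\simeq y_0$. First I would decompose $\C_+$ into the tent $R_0:=\{|z-\overline{z_0}|\le 2y_0\}$ and the dyadic annuli $R_n:=\{2^n y_0\le |z-\overline{z_0}|<2^{n+1}y_0\}$, $n\ge 1$. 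On $R_n$ one has $|z-\overline{z_0}|\simeq 2^n y_0$ and $R_n\subset\{|x-x_0|\lesssim 2^n y_0,\ 0<y\lesssim 2^n y_0\}$; since $y_0\le |z_0|$ this gives $|z|\lesssim(1+2^{n+1})|z_0|$, whence $\ln(e+|z|)\lesssim (1+n)\ln(e+|z_0|)$, while on the tent $\ln(e+|z|)\simeq \ln(e+|z_0|)$.

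The second ingredient, and the heart of the matter, is a one-dimensional concentration estimate generalizing \eqref{logW}: for $\beta>-1$ and a weight $\Phi(y)$ that is a product of powers of $\ln(e+1/y)$ and of $\ln(e+\ln(e+1/y))$, with either sign of the exponents, one has, uniformly in $a>0$,
$$\int_0^a \Phi(y)\,y^\beta\,dy\lesssim a^{\beta+1}\,\Phi(a).$$
Indeed these log and loglog factors are slowly varying, so the power $y^{\beta+1}$ forces the mass to the top $y\simeq a$; the rescaling $y=at$ reduces the integral to $\int_0^1 t^\beta\bigl(1+\ln(1/t)/\ln(e/a)\bigr)^{p}\,dt$ for the relevant exponent $p$, which stays bounded as $a\to 0$. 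I would establish this first, treating each sign of each exponent separately (for negative exponents the integrand is even dominated pointwise by its value at $y=a$). This is the step I expect to require the most care, because of the interplay of the inner and outer logarithms and the need for uniformity.

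With these two tools the estimate on each piece is mechanical. On $R_n$ the kernel is pulled out as $(2^n y_0)^{-(2+\alpha+\beta)}$, the $x$-integration contributes a length $\simeq 2^n y_0$, and the $y$-integration of $\Omega$ against $y^\beta\,dy$ is handled by the concentration inequality, evaluating the factor $\ln(e+1/y)$ at the top $y\simeq 2^n y_0$ (where it is $\le \ln(e+1/y_0)$) and replacing $\ln(e+|z|)$ by its bound $(1+n)\ln(e+|z_0|)$. This yields
$$\int_{R_n}\frac{\Omega(z)\,(\Im m z)^\beta}{|z-\overline{z_0}|^{2+\alpha+\beta}}\,dV(z)\lesssim 2^{-n\alpha}\,y_0^{-\alpha}\,W_n,$$
where $W_n$ is $\Omega$ evaluated at the top of $R_n$ with $\ln(e+|z_0|)$ replaced by $(1+n)\ln(e+|z_0|)$; in particular $W_0\simeq \Omega(z_0)$.

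Finally I would sum over $n$. The decay $2^{-n\alpha}$ coming from $\alpha>0$ dominates the at most polynomial-in-$n$ (times logarithmic) growth of $W_n$, so that $\sum_{n\ge 0}2^{-n\alpha}W_n\lesssim \Omega(z_0)$: the $n=0$ term already produces $\Omega(z_0)$, the near annuli ($2^n y_0\le 1$) are summable through the geometric factor, and the far annuli ($2^n y_0>1$), on which $\ln(e+1/y)\simeq 1$ no longer resembles $\ln(e+1/y_0)$, contribute only a term of size $\simeq y_0^{\alpha}$ times logarithms, which is negligible against $\Omega(z_0)$ precisely because $\alpha>0$. Multiplying by $y_0^{-\alpha}$ gives the announced bound \eqref{eq:weightforellirudin1}, with a constant depending on $\alpha,\beta,k,s$.
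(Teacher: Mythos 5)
Your dyadic annuli around $\overline{z_0}$ and the one--dimensional concentration estimate $\int_0^a\Phi(y)y^\beta dy\lesssim a^{\beta+1}\Phi(a)$ are the right tools, and for nonnegative exponents $k,s\ge 0$ your argument is essentially complete (only upper bounds on the logarithms are then needed, and your geometric summation absorbs both the $(1+n)$ factors and the mismatch on far annuli). The gap concerns negative exponents, which the lemma explicitly allows ($k,s\in\R$) and which are precisely the cases the paper uses (the weight $\omega^{-k}$ in the boundedness of $P_{\alpha+\beta}$ on $L^1_{\omega^{-k},\beta}(\C_+)$, in the duality theorem, and in the factorization of atoms). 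Every bound you state for the factor $\ln(e+|z|)$ is an upper bound, except the claim that $\ln(e+|z|)\simeq\ln(e+|z_0|)$ on the tent $R_0$, and that claim is false when $y_0\gtrsim |z_0|$: for $z_0=iR$ with $R$ large, the tent $\{|z-\overline{z_0}|\le 2y_0\}$ contains points arbitrarily close to the origin, where $\ln(e+|z|)\simeq 1$. For $k<0$ one must bound $(\ln(e+|z|))^k$ from \emph{above}, i.e.\ $\ln(e+|z|)$ from below, and in this regime no pointwise bound can succeed: with $\Omega(z)=(1+\ln(e+|z|))^k$, $k<0$, and $z_0=iR$, one has $\sup_{R_0}\Omega\simeq 1$, so ``bound $\Omega$ pointwise on the piece, then integrate the kernel'' can only give $y_0^{-\alpha}$ for the tent, whereas the lemma requires $y_0^{-\alpha}\Omega(z_0)\simeq y_0^{-\alpha}(\ln R)^k\ll y_0^{-\alpha}$. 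Your tail argument does not rescue this either: for the horizontal factor the transition scale is $|z_0|$, not $1$, and the annuli with $2^ny_0\gtrsim |z_0|$ contribute about $|z_0|^{-\alpha}=y_0^{-\alpha}(y_0/|z_0|)^\alpha$, which is not $\lesssim y_0^{-\alpha}(\ln(e+|z_0|))^k$ when $y_0\simeq|z_0|$.

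The missing ingredient is that the concentration (slow--variation) estimate must also be applied in the \emph{horizontal} variable, not only in $y$. For $k<0$ one has, uniformly,
$$\int_{|x-x_0|\le S}\bigl(\ln(e+|x|)\bigr)^k dx\ \lesssim\ S\,\bigl(\ln\bigl(e+\max(|x_0|,S)\bigr)\bigr)^k,$$
and on $R_n$ (where $S\simeq 2^ny_0$) the right--hand side is $\lesssim S\,(\ln(e+\max(|x_0|,y_0)))^k\simeq S\,(\ln(e+|z_0|))^k$, because for $k<0$ the bound is decreasing in the argument of the logarithm. Inserting this (and its $\ln\ln$ analogue) in place of your pointwise replacement of $\ln(e+|z|)$ --- and, for the mixed weight, splitting according to which of $\ln(e+|z_0|)$ or $\ln(e+1/y_0)$ dominates $\omega(z_0)$ --- makes the annulus-by-annulus estimates and the geometric summation go through for all signs of $k$ and $s$. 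In other words, your intermediate claim $\int_{R_n}\lesssim 2^{-n\alpha}y_0^{-\alpha}W_n$ with $W_0\simeq\Omega(z_0)$ is in fact true, but its justification for negative exponents requires integrating the weight horizontally (a Karamata-type average) rather than bounding it pointwise. (For reference, the paper gives no self-contained proof of this lemma --- it refers to \cite{BGS1} --- so the issue is purely one of correctness of your argument, not of divergence from the paper's route.)
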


We also need the following proposition.
\begin{proposition}\label{prop:densityrhoomega}
Let $k>0$. Then
the set $A^1(\mathbb{C}_+)$ is dense in $A^1_{\omega^{-k}}(\mathbb{C}_+)$.
\end{proposition}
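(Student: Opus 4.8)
The plan is to prove that $A^1(\mathbb{C}_+)$ is dense in $A^1_{\omega^{-k}}(\mathbb{C}_+)$ by a truncation-and-approximation argument. Since $\omega^{-k}\le 1$ (as $\omega\ge 1$), we have the continuous inclusion $A^1(\mathbb{C}_+)\subset A^1_{\omega^{-k}}(\mathbb{C}_+)$, so the statement makes sense; the point is to show every $f\in A^1_{\omega^{-k}}$ can be approximated in the $L^1_{\omega^{-k}}$ norm by genuinely integrable holomorphic functions. The natural idea is that the only obstruction to integrability of an $f\in A^1_{\omega^{-k}}$ comes from its slow decay near the boundary $\{\Im m z=0\}$ and near $\infty$, where the weight $\omega^{-k}$ is strictly smaller than $1$. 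I would therefore construct approximants that tame this behavior.

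\medskip\noindent
\emph{First step: reduce to a single point of bad behavior via dilations and translations.} I would use the natural automorphisms of $\mathbb{C}_+$, namely the dilations $f_t(z):=f(tz)$ for $t>1$ and the vertical translations $f^{(\delta)}(z):=f(z+i\delta)$ for $\delta>0$. The translated function $f^{(\delta)}$ is holomorphic on a strictly larger half-plane $\{\Im m z>-\delta\}$, hence is bounded on compact subsets of $\overline{\mathbb{C}_+}$ and decays better near the real axis; one expects $f^{(\delta)}\in A^1$ or at least to have much better boundary behavior. The plan is to show first that $f^{(\delta)}\to f$ in $A^1_{\omega^{-k}}$ as $\delta\to 0$, which handles the boundary singularity, and then separately control the behavior at infinity.

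\medskip\noindent
\emph{Second step: control the behavior at infinity.} After translating to gain boundary regularity, the remaining issue is decay at $\infty$, where $\omega(z)\simeq 1+\ln_+(|z|)$. Here I would multiply by a holomorphic cutoff that forces integrability at infinity, for example $g_\e(z):=f(z)\,(1+ \e z/i)^{-N}$ or better $f(z)e^{i\e z}$ — note $e^{i\e z}=e^{i\e x}e^{-\e y}$ has modulus $e^{-\e y}\le 1$ on $\mathbb{C}_+$, is holomorphic, and decays in $y$, which multiplies $f$ by a function bounded by $1$ that tends to $1$ pointwise as $\e\to 0$. Multiplying $f$ by such factors produces holomorphic functions whose integrability is improved, and by dominated convergence (using $\omega^{-k}\le 1$ and $|f|\in L^1_{\omega^{-k}}$ as the dominating function) these converge to $f$ in $A^1_{\omega^{-k}}$. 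Combining a factor that improves decay at infinity with the translation that improves boundary behavior should yield approximants in $A^1(\mathbb{C}_+)$.

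\medskip\noindent
\emph{Main obstacle.} The delicate point is verifying that the combined approximant is genuinely in $A^1$ (unweighted), and that each limiting step is valid in the weighted norm $L^1_{\omega^{-k}}$. The translation step requires justifying $L^1_{\omega^{-k}}$-continuity of vertical translations on the Bergman space, which is not entirely automatic since $\omega^{-k}$ is not translation-invariant; one must check that $\omega^{-k}(z+i\delta)$ and $\omega^{-k}(z)$ are comparable uniformly for small $\delta$, or absorb the discrepancy. I expect the hardest part to be establishing the convergence $f^{(\delta)}\to f$ in $A^1_{\omega^{-k}}$ near the boundary: one needs a uniform-integrability or density argument showing that the contribution to the weighted norm from a thin strip $\{0<y<\eta\}$ is small, which ultimately rests on the finiteness of $\int |f|\,\omega^{-k}\,dV$ together with a dominated-convergence argument. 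The factor at infinity, by contrast, is routine given the bound $|e^{i\e z}|\le 1$.
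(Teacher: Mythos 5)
Your overall strategy --- vertical translation $f(z+i\delta)$ to tame the boundary, times a holomorphic factor bounded by $1$ and tending pointwise to $1$ to force decay at infinity --- is sound, and it is essentially the argument the paper relies on (it defers to \cite{BGS1}; compare also the approximants $g_\e(z)=g(z+i\e)(1-i\e z)^{-N}$ used in Section 4.2 of this very paper, while \cite{BGS1} inserts the intermediate boundary weight $\rho^k=(1+\ln(e+1/y))^{-k}$ before treating infinity). With the \emph{rational} factor the plan does assemble into a proof: $|1-i\e z|\gtrsim 1+\e|z|$, and since $\omega(z+i\delta)\le C_\delta\,\ln(e+|z|)$, the translated function satisfies $\int_{\C_+}|f(z+i\delta)|\,(\ln(e+|z|))^{-k}\,dV(z)<\infty$, while $(\ln(e+|z|))^{k}(1+\e|z|)^{-N}$ is bounded for any $N\ge 1$; hence $f(\cdot+i\delta)\,(1-i\e z)^{-N}\in A^1(\C_+)$, and the two limits go through as you indicate. (For the translation step, note that two-sided comparability of $\omega(z)$ and $\omega(z+i\delta)$ is \emph{false} near the boundary; only the one-sided bound $\omega(z+i\delta)\le C\,\omega(z)$, uniform in $0<\delta<1$, holds, and fortunately that is the direction needed everywhere, so your hedge ``absorb the discrepancy'' is the correct one.)

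The genuine flaw is the factor you call ``better'': $e^{i\e z}$ does not work, and the step you call ``routine'' is exactly where it fails. Since $|e^{i\e z}|=e^{-\e y}$, this factor gives no decay at all as $|x|\to\infty$ with $y$ bounded; but for $f\in A^1_{\omega^{-k}}(\C_+)$ the loss of integrability can live precisely on horizontal strips, where $\omega(z)\simeq \ln(e+|x|)$ permits an extra $(\ln|x|)^{k}$ of mass. Concretely, take
$$f(z)=\frac{e^{iz}}{(z+2i)\log(z+2i)},$$
which is holomorphic on $\C_+$ with $|f(z)|\simeq e^{-y}\bigl((1+|z|)(1+\ln(e+|z|))\bigr)^{-1}$. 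Since $\omega(z)\gtrsim \ln(e+|z|)$, one checks $f\in A^1_{\omega^{-k}}(\C_+)$ for every $k>0$; moreover $f$ is bounded and continuous up to $\R$, so translating by $i\delta$ changes nothing essential. Yet for all $\e,\delta>0$,
$$\int_{\C_+}|f(z+i\delta)|\,|e^{i\e z}|\,dV(z)\ \gtrsim\ \int_0^1\!\!\int_{|x|>10}\frac{dx\,dy}{|x|\ln|x|}\ =\ \infty,$$
so these approximants are never in $A^1(\C_+)$ and the argument collapses. The cutoff must decay in $|z|$, not merely in $y=\Im m\,z$: that is what $(1-i\e z)^{-N}$ provides (it absorbs any power of the logarithm), and it is the choice you should commit to.
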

We refer to \cite{BGS1} for the proof, which is inspired in particular by \cite{BBGNPR, st-book-fourier-analysis}. It is easily adapted from the statement given there  when $k=1.$ As in this reference, one first consider  the weighted Bergman space $A^1_{\rho^k}(\mathbb{C}_+),$ with $\rho:=(1+\ln(e+1/y))^{-1}$, $k>0$. We leave details of verification to the reader.

\begin{proof}[of Proposition \ref{cor:boundednessP1omegainverse}]
For $k>0$, the result follows from the continuous embedding $A^1_{\omega^k}(\mathbb{C}_+)\hookrightarrow A^1(\mathbb{C}_+)$ and that $P_\alpha$ reproduces functions of $A^1(\mathbb{C}_+)$. 
\medskip

To obtain that $P_\alpha$ also reproduces functions in $A^1_{\omega^{-k}}(\mathbb{C}_+)$, $k>0$, we first observe that $P_\alpha$ maps $L^1_{\rho^k}(\mathbb{C}_+)$ boundedly into $A^1_{\rho^k}(\mathbb{C}_+)$. The conclusion for this case then follows from the latter and Proposition \ref{prop:densityrhoomega}.
\end{proof}

 \subsection{Duality} 
 Let us use the notation $L_{\omega^{-k},\alpha}^1(\mathbb{C}_+)=L^1(\mathbb{C}_+,\omega^{-k}(z)dV_\alpha(z)).$ 
\begin{proposition}
Let $\alpha> 0$, and $\beta>-1$. Then the Bergman projector $P_{\alpha+\beta}$ is bounded from $L_{\omega^{-k},\beta}^1(\mathbb{C}_+)$ to $L_{\omega^{-k},\beta}^1(\mathbb{C}_+)$.
\end{proposition}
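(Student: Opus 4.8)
The plan is to deduce the boundedness of $P_{\alpha+\beta}$ on $L^1_{\omega^{-k},\beta}(\mathbb{C}_+)$ directly from the pointwise kernel estimate of Lemma \ref{lem:weightforellirudin} via a Schur-type/Fubini argument. Recall that
$$P_{\alpha+\beta}g(z)=\int_{\mathbb{C}_+}K_{\alpha+\beta}(z,w)g(w)\,dV_{\alpha+\beta}(w),\qquad K_{\alpha+\beta}(z,w)=\frac{c_{\alpha+\beta}}{(z-\overline w)^{2+\alpha+\beta}}.$$
To estimate $\|P_{\alpha+\beta}g\|_{L^1_{\omega^{-k},\beta}}$ I would bound $|P_{\alpha+\beta}g(z)|$ by the integral of $|K_{\alpha+\beta}(z,w)|\,|g(w)|\,dV_{\alpha+\beta}(w)$, multiply by the weight $\omega^{-k}(z)\,dV_\beta(z)$, and interchange the order of integration by Tonelli. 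This reduces the matter to controlling, for each fixed $w$, the inner integral in $z$.

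The key step is then to apply Lemma \ref{lem:weightforellirudin} with the roles adjusted so that the weight $\Omega$ is taken to be $\omega^{-k}$. Concretely, after the Fubini interchange the inner integral is
$$\int_{\mathbb{C}_+}\frac{\omega^{-k}(z)}{|z-\overline w|^{2+\alpha+\beta}}\,dV_\beta(z),$$
which is exactly of the form treated by \eqref{eq:weightforellirudin1} with the weight $\Omega=\omega^{-k}$ (the choice $\varepsilon_1=\varepsilon_2=1$, $s=0$, and exponent $-k$ in place of $k$, noting that $\omega(z)\simeq \ln^{\varepsilon_1}(e+|z|)+\ln^{\varepsilon_2}(e+1/\Im m\,z)$). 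The lemma yields the bound $C\,(\Im m\,w)^{-\alpha}\omega^{-k}(w)$ for this integral. Multiplying by the remaining factors coming from $dV_{\alpha+\beta}(w)=(\Im m\,w)^{\alpha}\,dV_\beta(w)$, the troublesome factor $(\Im m\,w)^{-\alpha}$ cancels against $(\Im m\,w)^{\alpha}$, and I am left with
$$\|P_{\alpha+\beta}g\|_{L^1_{\omega^{-k},\beta}}\lesssim \int_{\mathbb{C}_+}|g(w)|\,\omega^{-k}(w)\,dV_\beta(w)=\|g\|_{L^1_{\omega^{-k},\beta}},$$
which is precisely the desired boundedness.

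The main obstacle, and the one point that needs genuine care rather than bookkeeping, is verifying that Lemma \ref{lem:weightforellirudin} applies with the weight $\omega^{-k}$ for arbitrary real $k$. The lemma as stated is phrased for a weight of a specific structural form with exponent $k$, and one must check that a negative power of $\omega$ fits that template — essentially that $\omega^{-k}\simeq(\ln^{\varepsilon_1}(e+|z|)+\ln^{\varepsilon_2}(e+1/\Im m\,z))^{-k}$ satisfies the hypotheses, so that the estimate holds uniformly in $w$. Since the lemma allows any real power, this is a matter of matching notation, and the $\alpha>0$ hypothesis guarantees the integrability that makes the kernel estimate valid; the condition $\beta>-1$ ensures the base measure $dV_\beta$ defines a nontrivial Bergman setting. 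Once this identification is made, the Fubini–Schur argument closes the proof immediately.
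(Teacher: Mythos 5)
Your proof is correct and is exactly the paper's argument: the paper proves this proposition as ``a direct consequence of Fubini's Theorem and Lemma \ref{lem:weightforellirudin}'', which is precisely your Fubini interchange followed by applying the kernel estimate \eqref{eq:weightforellirudin1} with $\Omega=\omega^{-k}$ (allowed since the lemma permits any real exponent) and cancelling $(\Im m\, w)^{-\alpha}$ against the factor $(\Im m\, w)^{\alpha}$ from $dV_{\alpha+\beta}$. You have merely written out the details that the paper leaves implicit.
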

\begin{proof}
This is a direct consequence of Fubini's Theorem and Lemma \ref{lem:weightforellirudin}.          
\end{proof}
Let  us define the following weighted space. $$L_{\omega^k,\alpha}^\infty(\mathbb{C}_+):=\{f\,\,\textrm{measurable}:\sup_{z\in\mathbb{C}_+}(\Im m(z))^\alpha\omega^k(z)|f(z)|<\infty\}.$$
Then $L_{\omega^k,\alpha}^\infty(\mathbb{C}_+)$ is a Banach space with norm
$$\|f\|_{\infty,\omega^k,\alpha}:=\sup_{z\in\mathbb{C}_+}(\Im m(z))^\alpha\omega^k(z)|f(z)|.$$
The following is elementary.
\begin{proposition}\label{prop:inftyboundedness}
Let $\alpha>0$, and $\beta>-1$. Then the Bergman projection $P_{\alpha+\beta}$ maps $L_{\omega^k,\alpha}^\infty(\mathbb{C}_+)$ boundedly into itself.
\end{proposition}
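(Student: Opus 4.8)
The plan is to reduce the assertion to a single pointwise kernel estimate and then invoke Lemma \ref{lem:weightforellirudin}; no density or duality argument is needed, which is precisely why the statement is elementary. Write $M:=\|f\|_{\infty,\omega^k,\alpha}$, so that by definition $|f(w)|\le M(\Im m w)^{-\alpha}\omega^{-k}(w)$ for almost every $w\in\C_+$.

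First I would insert this bound under the integral sign defining $P_{\alpha+\beta}$. Using $|K_{\alpha+\beta}(z,w)|=c_{\alpha+\beta}|z-\overline w|^{-(2+\alpha+\beta)}$ and $dV_{\alpha+\beta}(w)=(\Im m w)^{\alpha+\beta}dV(w)$, the factors $(\Im m w)^{-\alpha}$ and $(\Im m w)^{\alpha+\beta}$ combine into $(\Im m w)^{\beta}$, giving
$$|P_{\alpha+\beta}f(z)|\le c_{\alpha+\beta}\,M\int_{\C_+}\frac{\omega^{-k}(w)}{|z-\overline w|^{2+\alpha+\beta}}\,dV_\beta(w).$$

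The second step is to recognize the right-hand integral as an instance of \eqref{eq:weightforellirudin1}. For this I would check that $\omega^{-k}$ is comparable to an admissible weight $\Omega$ of the lemma: since $\ln(e+t)\simeq 1+\ln_+(t)$ for all $t>0$, one has $\omega(z)\simeq \ln(e+|z|)+\ln(e+1/\Im m z)$, and hence $\omega^{-k}(z)\simeq\Omega(z)$, where $\Omega$ is given by the lemma's formula in which the displayed power $k$ is taken equal to $-k$, the parameter $s$ is set to $0$ (so the iterated-logarithm factor disappears), and $\varepsilon_1=\varepsilon_2=1$. Since $\alpha>0$ and $\beta>-1$, Lemma \ref{lem:weightforellirudin} applies with these same parameters $\alpha,\beta$ and yields
$$\int_{\C_+}\frac{\omega^{-k}(w)}{|z-\overline w|^{2+\alpha+\beta}}\,dV_\beta(w)\lesssim (\Im m z)^{-\alpha}\,\omega^{-k}(z),$$
where I also use $|z-\overline w|=|w-\overline z|$ to match the orientation of the lemma.

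Combining the two displays gives $(\Im m z)^{\alpha}\omega^{k}(z)\,|P_{\alpha+\beta}f(z)|\lesssim M$ with a constant independent of $z$ and of $f$; taking the supremum over $z\in\C_+$ yields $\|P_{\alpha+\beta}f\|_{\infty,\omega^k,\alpha}\lesssim\|f\|_{\infty,\omega^k,\alpha}$, which is the assertion. There is no serious obstacle: the entire content is carried by the weighted kernel bound of Lemma \ref{lem:weightforellirudin}, and the only point requiring any care is the identification of $\omega^{-k}$ with the lemma's template weight $\Omega$, namely the equivalence $\omega\simeq\ln(e+|z|)+\ln(e+1/\Im m z)$ together with the harmless observation that raising a two-sided comparison to the power $-k$ only affects the implied constants.
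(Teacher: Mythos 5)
Your proof is correct: the pointwise insertion of the bound $|f(w)|\le M(\Im m\, w)^{-\alpha}\omega^{-k}(w)$, the identification $\omega\simeq\ln(e+|z|)+\ln\left(e+1/\Im m\, z\right)$ so that $\omega^{-k}$ falls into the admissible family of Lemma \ref{lem:weightforellirudin} (with the lemma's power equal to $-k$, $s=0$, $\varepsilon_1=\varepsilon_2=1$), and the symmetry $|z-\overline w|=|w-\overline z|$ are exactly the ingredients behind the paper's claim, which is stated without proof as ``elementary'' and is the $L^\infty$ counterpart of the preceding $L^1$ proposition proved from the same lemma. Nothing is missing; your argument also shows in passing that the defining integral converges absolutely, so $P_{\alpha+\beta}f$ is well defined on this space.
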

 We have the following duality result. 
 
\begin{theorem}\label{thm:dualweighted} Let $k\in \R.$
The dual space $\left(A_{\omega^{-k}}^1(\mathbb{C}_+)\right)^*$ of $A_{\omega^{-k}}^1(\mathbb{C}_+)$ identifies with $\mathcal{B}_{\omega^{k}}(\mathbb{C}_+)$ modulo constants under the pairing
\begin{equation}\label{eq:weighteddualpairing}
   \left<f,g\right>_{*}=\int_{\mathbb{C}_+}f(z)\overline{g'(z)}(\Im m(z))dV(z), 
\end{equation}
$f\in A_{\omega^{-k}}^1(\mathbb{C}_+)$, and $g\in \mathcal{B}_{\omega^{k}}(\mathbb{C}_+)$.
\end{theorem}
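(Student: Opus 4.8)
The plan is to prove that $\Phi\colon g\mapsto \langle \cdot,g\rangle_*$ is a bounded isomorphism from $\mathcal{B}_{\omega^{k}}(\C_+)/\C$ onto $\big(A_{\omega^{-k}}^1(\C_+)\big)^*$. Throughout I write $dV_1:=\left(\Im m\right)dV$ and $K_1(z,w)=c_1(z-\overline w)^{-3}$ for the weight-$1$ Bergman kernel, and I identify a class $g\bmod\C$ with its derivative $u:=g'$: by \eqref{semi} this identifies $\mathcal{B}_{\omega^{k}}/\C$ isometrically with the holomorphic functions of $L_{\omega^{k},1}^\infty(\C_+)$, since $\|g\|_{\dot{\mathcal B}_{\omega^{k}}}=\|u\|_{\infty,\omega^{k},1}$, and the pairing becomes $\langle f,g\rangle_*=\int_{\C_+}f\,\overline u\,dV_1$. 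Boundedness of $\Phi$ is then immediate: the defining bound $|g'(z)|\le\|g\|_{\dot{\mathcal B}_{\omega^{k}}}\left(\Im m(z)\right)^{-1}\omega^{-k}(z)$ gives $|\langle f,g\rangle_*|\le\|g\|_{\dot{\mathcal B}_{\omega^{k}}}\int_{\C_+}|f|\,\omega^{-k}\,dV=\|g\|_{\dot{\mathcal B}_{\omega^{k}}}\|f\|_{A_{\omega^{-k}}^1}$, and since a constant $g$ yields the zero functional, $\Phi$ descends to the quotient with $\|\Phi(g)\|\le\|g\|_{\dot{\mathcal B}_{\omega^{k}}}$.

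\textbf{Surjectivity.} Given $L\in\big(A_{\omega^{-k}}^1\big)^*$, I extend it by Hahn--Banach to $L_{\omega^{-k}}^1(\C_+)$ and use the duality $\big(L^1(\omega^{-k}dV)\big)^*=L^\infty(dV)$ to write $L(f)=\int_{\C_+}f\,\overline H\,\omega^{-k}\,dV$ with $\|H\|_\infty\simeq\|L\|$. Setting $\psi:=H\,\omega^{-k}/\Im m$ gives $L(f)=\int f\,\overline\psi\,dV_1$ and $\|\psi\|_{\infty,\omega^{k},1}=\|H\|_\infty$, so $\psi\in L_{\omega^{k},1}^\infty$. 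Now put $u:=P_1\psi$; by Proposition \ref{prop:inftyboundedness} (with $\alpha=1$, $\beta=0$) $u$ is holomorphic and $\|u\|_{\infty,\omega^{k},1}\lesssim\|L\|$. Using the Hermitian symmetry $\overline{K_1(z,w)}=K_1(w,z)$, Fubini, and the reproduction $P_1f=f$ of Proposition \ref{cor:boundednessP1omegainverse}, I get for $f\in A_{\omega^{-k}}^1$
$$\int_{\C_+}f\,\overline u\,dV_1=\int_{\C_+}\overline\psi\,(P_1f)\,dV_1=\int_{\C_+}f\,\overline\psi\,dV_1=L(f).$$
The interchange is legitimate because $\int_{\C_+}|K_1(z,w)|\,|\psi(w)|\,dV_1(w)\lesssim\left(\Im m z\right)^{-1}\omega^{-k}(z)$ by Lemma \ref{lem:weightforellirudin}, so the double integral is dominated by $\|f\|_{A_{\omega^{-k}}^1}$. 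Hence $L=\Phi(g)$ for any primitive $g$ of $u$, with $\|g\|_{\dot{\mathcal B}_{\omega^{k}}}\lesssim\|L\|$.

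\textbf{Injectivity and lower bound.} It remains to show $\|g\|_{\dot{\mathcal B}_{\omega^{k}}}\lesssim\|\Phi(g)\|$. Fix $z_0\in\C_+$ and test $\Phi(g)$ against $f_{z_0}(z):=\left(\Im m(z_0)\right)\omega^{k}(z_0)(z-\overline{z_0})^{-3}$. Lemma \ref{lem:weightforellirudin} gives $\|f_{z_0}\|_{A_{\omega^{-k}}^1}\simeq1$, the lower bound coming from integration over the disc $\{|z-z_0|<\tfrac12\Im m(z_0)\}$, on which $|z-\overline{z_0}|\simeq\Im m(z_0)$ and $\omega(z)\simeq\omega(z_0)$. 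On the other hand, conjugating and using $K_1(z_0,z)=c_1(z_0-\overline z)^{-3}$ together with the reproducing identity $P_1g'=g'$ evaluated at $z_0$, one finds $|\langle f_{z_0},g\rangle_*|=c_1^{-1}\left(\Im m(z_0)\right)\omega^{k}(z_0)|g'(z_0)|$. Taking the supremum over $z_0$ yields $\|g\|_{\dot{\mathcal B}_{\omega^{k}}}\lesssim\|\Phi(g)\|$, which gives simultaneously injectivity and the norm equivalence, and completes the identification.

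\textbf{Main obstacle.} The delicate point, used in the last step (and implicitly describing the range), is the reproducing formula $P_1u=u$ for $u=g'$. Such a $u$ obeys only $|u(z)|\lesssim\left(\Im m z\right)^{-1}\omega^{-k}(z)$ and lies in no weighted Bergman space $A_\alpha^1(\C_+)$, so Proposition \ref{cor:boundednessP1omegainverse} does not apply to it directly. I would obtain it by approximation: the integral defining $P_1u$ converges absolutely by Lemma \ref{lem:weightforellirudin}, and the holomorphic functions $u_\varepsilon(z):=u(z)\big(i/(\varepsilon z+i)\big)^{3}$ satisfy $|u_\varepsilon|\le|u|$, converge to $u$ pointwise as $\varepsilon\to0^+$, and belong to $A_1^1(\C_+)$ (hence are reproduced by $P_1$ by the classical weight-$1$ Bergman reproducing formula); dominated convergence with dominating function $|K_1(z,\cdot)|\,|u|\in L^1(dV_1)$ then yields $P_1u=u$. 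This mirrors the arguments of \cite{BGS1}.
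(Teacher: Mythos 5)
Your proof of boundedness and of surjectivity is essentially the paper's own argument: the same Hahn--Banach extension, the same representation of the extended functional by a function in $L^\infty(dV)$, the same passage to $u=P_1\psi$ (your $\psi$ is the paper's $g=\omega^{-k}(\Im m)^{-1}h$), the same use of Proposition \ref{cor:boundednessP1omegainverse} to reproduce $f$, of Proposition \ref{prop:inftyboundedness} to see that $u$ is the derivative of a $\mathcal{B}_{\omega^{k}}$-function, and of Fubini --- which you, unlike the paper, justify explicitly via Lemma \ref{lem:weightforellirudin}. Where you genuinely go beyond the paper is the last step: the paper stops after surjectivity and never addresses injectivity of $\Phi\colon g\mapsto\langle\cdot,g\rangle_*$ on $\mathcal{B}_{\omega^{k}}/\C$, nor the lower bound $\|g\|_{\dot{\mathcal{B}}_{\omega^{k}}}\lesssim\|\Phi(g)\|$, without which ``identifies'' is not fully established. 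You correctly isolate the real difficulty there: recovering $g'(z_0)$ pointwise from the pairing requires the reproducing identity $P_1g'=g'$ for a holomorphic function that satisfies only $|g'|\lesssim(\Im m)^{-1}\omega^{-k}$ and so lies in no space covered by Proposition \ref{cor:boundednessP1omegainverse}. Your dilation-type approximation $u_\varepsilon(z)=u(z)\bigl(i/(\varepsilon z+i)\bigr)^3$, dominated by $|u|$ and placed in $A_1^1(\C_+)$ by Lemma \ref{lem:weightforellirudin}, settles this modulo the classical fact that $P_1$ reproduces $A_1^1(\C_+)$ --- an external ingredient of exactly the same nature as the unweighted reproducing fact the paper itself quotes without proof inside Proposition \ref{cor:boundednessP1omegainverse}, so the appeal is legitimate (it follows from density of $A_1^1\cap A_1^2$ in $A_1^1$). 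In short: same skeleton as the paper for existence of a representing $g$, plus a correct completion for uniqueness and norm equivalence; what your version buys is a fully proved identification, at the reasonable cost of one additional classical citation.
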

\begin{proof}
       Let $g\in \mathcal{B}_{\omega^{k}}(\mathbb{C}_+)$. Then that (\ref{eq:weighteddualpairing}) defines a bounded linear operator is direct from the definition of the spaces involved.  
       \medskip
       
       Conversely, assume that $\Lambda$ is an element of $\left(A_{\omega^{-k}}^1(\mathbb{C}_+)\right)^*$. Then $\Lambda$ can be extended as an element $\tilde{\Lambda}$ of $\left(L_{\omega^{-k}}^1(\mathbb{C}_+)\right)^*$ with the same operator norm. Then classical arguments give that there is an element $h\in L^\infty(\mathbb{C}_+)$ such that for any $f\in L_{\omega^{-k}}^1(\mathbb{C}_+)$,
$$\tilde{\Lambda}(f)=\int_{\mathbb{C}_+}f(z)\overline{h(z)}\omega^{-k}(z)dV(z).$$
In particular, we have that for any $f\in A_{\omega^{-k}}^1(\mathbb{C}_+)$,
$${\Lambda}(f)=\int_{\mathbb{C}_+}f(z)\overline{h(z)}\omega^{-k}(z)dV(z).$$

By Proposition \ref{cor:boundednessP1omegainverse}, $P_1$ reproduces functions in $A_{\omega^{-k}}^1(\mathbb{C}_+)$. Hence, for $f\in A_{\omega^{-k}}^1(\mathbb{C}_+)$,
\begin{eqnarray*}
{\Lambda}(f) &=&\int_{\mathbb{C}_+}f(z)\overline{h(z)}\omega^{-k}(z)dV(z)\\
&=& \int_{\mathbb{C}_+}P_1(f)(z)\overline{h(z)}\omega^{-k}(z)dV(z)\\ &=&
\int_{\mathbb{C}_+}f(z)\overline{P_1g(z)}(\Im m(z))dV(z)
\end{eqnarray*}
where $g(z)=\omega^{-k}(z)(\Im m(z))^{-1}h(z)$. Here, we applied Fubini's Theorem to obtain the last equality.\\ From Proposition \ref{prop:inftyboundedness}, $P_1$ is bounded on $L_{\omega^{k},1}^\infty(\mathbb{C}_+)$. As $g$ belongs to  $L_{\omega^{k},1}^\infty(\mathbb{C}_+)$, the same holds for $P_1(g)$. It follows that if we define $G$ to be a solution of $G'(z)=P_1g(z)$, then $G\in \mathcal{B}_{\omega^{k}}(\mathbb{C}_+)$. Hence $${\Lambda}(f)=\int_{\mathbb{C}_+}f(z)\overline{G'(z)}(\Im m(z))dV(z).$$
That is, $\Lambda$ is given by (\ref{eq:weighteddualpairing}). The proof is complete.
\end{proof}
   

\section{Products of functions in $A_{\omega^l}^1$ and $\mathcal{B}_{\omega^{k}}(\mathbb{C}_+)$ and Hankel operators on $A^1(\C_+)$}
\subsection{Factorization of an atom}

As stated in corollary \ref{emb-weak}, the product of a function in $A^1_{\omega^l}(\C_+)$ and a function in $\mathcal B_{\omega^k}(\C_+)$ belongs to $A^1_{\omega^{l+k-1}}(\C_+)$. The purpose of this paragraph is to consider the converse statement in a particular case, namely we will factorize an atom of $A^1_{\omega^{l+k-1}}(\C_+)$.
We consider the following function.
\begin{equation}\label{eq:atom}
    f(z)=\frac{(\Im m (w))\omega^{1-(l+k)} (w)}{(z-\overline w)^3}.
\end{equation} 
Using Lemma \ref{lem:weightforellirudin}, one obtains $\|f\|_{L^1(\omega^{l+k-1})}\simeq 1$. For this weighted kernel, we actually have a strong factorization as shown below. This result will be useful in proving the necessary condition in the boundedness of Hankel operators on $A^1(\C_+)$.
\begin{proposition}\label{prop:factor}
Let $f$ be an atom given by (\ref{eq:atom}). There exist $g\in A_{\omega^{l}}^1(\C_+)$ and $\theta \in \mathcal{B}_{\omega^{k}}(\mathbb{C}_+)$, $k<1$ such that $$f=g\times \theta$$
with 
$$\Vert g\Vert_{A_{\omega^{l}}^1}\times \Vert \theta\Vert_{\mathcal B_{\omega^{k}}}\lesssim 1.$$
\end{proposition}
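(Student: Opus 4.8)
The plan is to read off the factor $\theta$ directly from the test functions built in Lemma~\ref{lem: testTheta}. Precisely, I would set $\theta:=\theta_w^{\,1-k}$. This is legitimate because $k<1$ and, by assertion (i) of that lemma, $\Re e(\theta_w)>0$, so that $\theta_w^{\,1-k}=\exp\bigl((1-k)\log\theta_w\bigr)$ is a well-defined, zero-free holomorphic function on $\C_+$. Assertion (iii) then gives at once $\theta\in\mathcal{B}_{\omega^{k}}(\C_+)$ with $\|\theta\|_{\mathcal{B}_{\omega^{k}}}\simeq 1$, uniformly in $w$. I would then define $g:=f/\theta$, which is holomorphic since $\theta$ is zero-free, and which is given explicitly by
$$g(z)=\frac{(\Im m(w))\,\omega^{1-(l+k)}(w)}{(z-\overline w)^3\,\theta_w(z)^{1-k}}.$$
As $\|\theta\|_{\mathcal{B}_{\omega^{k}}}\simeq 1$, the whole statement reduces to proving $\|g\|_{A^1_{\omega^{l}}}\lesssim 1$, that is, to the integral estimate
$$J(w):=\int_{\C_+}\frac{\omega^l(z)}{|z-\overline w|^3\,|\theta_w(z)|^{1-k}}\,dV(z)\lesssim\frac{\omega^{l+k-1}(w)}{\Im m(w)},$$
after which $\|g\|_{A^1_{\omega^{l}}}=(\Im m(w))\,\omega^{1-(l+k)}(w)\,J(w)\lesssim 1$.

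The guiding heuristic, which also explains the normalizing power $\omega^{1-(l+k)}(w)$ chosen in \eqref{eq:atom}, is that on the Bergman ball around $w$ one has $|z-\overline w|\simeq\Im m(w)$, $\omega(z)\simeq\omega(w)$ and, by assertion (ii), $|\theta_w(z)|\simeq\omega(w)$; the contribution of this ball to $J(w)$ is exactly of size $\omega^{l+k-1}(w)/\Im m(w)$, so that $\|g\|\simeq1$. To turn this into a proof I would first replace $|\theta_w(z)|$ by the equivalent quantity $\tau_w(z):=1+\ln_+|z|+\ln_+\tfrac1{|z-\overline w|}$ coming from assertion (ii), and split $\C_+$ according to whether or not $\tau_w(z)$ is comparable to $\omega(z)$. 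On the region where $\tau_w(z)\simeq\omega(z)$ the integrand becomes $\simeq\omega^{l+k-1}(z)\,|z-\overline w|^{-3}$, a genuine weight in the variable $z$ alone of the form allowed in Lemma~\ref{lem:weightforellirudin} (take $\beta=0$, $\alpha=1$, and exponent $l+k-1$, since $\omega(z)\simeq\ln(e+|z|)+\ln(e+\Im m(z)^{-1})$). That lemma then yields precisely $\int\omega^{l+k-1}(z)|z-\overline w|^{-3}\,dV(z)\lesssim(\Im m(w))^{-1}\omega^{l+k-1}(w)$, which is the desired bound.

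I expect the complementary region, where $\tau_w(z)\ll\omega(z)$, to be the main obstacle, since there the factor $|\theta_w|^{-(1-k)}$ no longer provides the compensating gain $\omega^{k-1}(w)$ exploited above. Using the elementary inequality $|z-\overline w|\ge\Im m(z)$, this region forces $\omega(z)\simeq\ln_+(\Im m(z)^{-1})$ together with $|z-\overline w|\gg\Im m(z)$; in particular $z$ lies far from $w$. I would treat it by hand, keeping only the lower bound $\tau_w(z)\gtrsim1+\ln_+|z|$ and instead extracting the needed decay from $|z-\overline w|^{-3}$: bounding $|z-\overline w|$ below by $\Im m(z)$ in the part of the region near the boundary, and by $1+|w|$ in the part where $z$ stays in a fixed bounded set while $w$ is large, and running a dyadic decomposition in $|z-\overline w|$. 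The point is that in this region the kernel decay more than absorbs the loss of the logarithmic gain, so its contribution is again $\lesssim\omega^{l+k-1}(w)/\Im m(w)$. Adding the two regions gives the estimate on $J(w)$, and hence the factorization $f=g\times\theta$ with $\|g\|_{A^1_{\omega^{l}}}\,\|\theta\|_{\mathcal{B}_{\omega^{k}}}\lesssim 1$.
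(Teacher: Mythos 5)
Your setup coincides with the paper's: the same choice $\theta=\theta_w^{1-k}$, the same $g=f/\theta$, and the same reduction to $J(w)\lesssim\omega^{l+k-1}(w)/\Im m(w)$; your treatment of the region where your $\tau_w(z)=1+\ln_+|z|+\ln_+\frac1{|z-\overline w|}$ is comparable to $\omega(z)$ (Lemma~\ref{lem:weightforellirudin} applied to the weight $\omega^{l+k-1}$, with $\alpha=1$, $\beta=0$) is also correct. The gap is in the complementary region, and it is genuine: you propose to keep only $\tau_w(z)\gtrsim 1+\ln_+|z|$ there and to recover the estimate from the decay of $|z-\overline w|^{-3}$ alone. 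But the discarded term $\ln_+\frac{1}{|z-\overline w|}$ is exactly the source of the gain $\omega(w)^{k-1}$ in the regime where $\omega(w)\simeq\ln_+(1/\Im m(w))$, and kernel decay cannot replace it. Test this with $l=0$ and $w=iv$, $v\to 0$: the target for that region is $\lesssim(\ln(1/v))^{k-1}/v$, which is $o(1/v)$ because $k<1$. Points of the bad region at distance $|z-\overline w|\simeq\rho$, $v\le\rho\le 1$, satisfy $|z|\le 2$, so $1+\ln_+|z|\simeq 1$ and your retained factor gives no gain at all; the geometric facts you allow yourself ($\omega(z)\simeq\ln_+(1/\Im m z)$, $|z-\overline w|\gg\Im m z$, dyadic decomposition in $|z-\overline w|$; the lower bound $|z-\overline w|\gtrsim 1+|w|$ is vacuous here since $|w|\le 1$) only bound the contribution of each dyadic annulus $\{|z-\overline w|\simeq\rho,\ \Im m z\lesssim\rho\}$ by $\rho^{2}\cdot\rho^{-3}=\rho^{-1}$, and summing over dyadic $\rho\in[v,1]$ gives $\simeq 1/v$. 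So the bound your scheme produces exceeds the target by the unbounded factor $(\ln(1/v))^{1-k}$.

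The estimate for that region is nevertheless true, but proving it requires using the defining inequality of the region quantitatively: $1+\ln_+|z|+\ln_+\frac1{|z-\overline w|}\le\varepsilon\,\omega(z)$ forces $\Im m z\lesssim|z-\overline w|^{1/\varepsilon}(1+|z|)^{-1/\varepsilon}$, i.e.\ the region is exponentially thin near the boundary, and with this constraint the annulus contribution improves from $\rho^{-1}$ to $\rho^{1/\varepsilon-2}$, whose sum over $\rho\le 1$ is $O(1)\lesssim\omega^{l+k-1}(w)/\Im m(w)$. The paper avoids the issue altogether by never discarding $\ln_+\frac1{|z-\overline w|}$: it distinguishes the cases $\omega(w)\simeq\omega_1(w):=\ln(e+1/\Im m(w))$ and $\omega(w)\simeq\ln(e+|w|)$, decomposes $\C_+$ into dyadic Carleson rings $E_j=Q_{w_j}\setminus Q_{w_{j-1}}$ with $w_j=u+i2^jv$, uses $|z-\overline w|\le 2\Im m(w_j)$ on $E_j$ to get $|\theta(z)|\gtrsim\omega_1^{1-k}(w_j)$ (via assertion (ii) of Lemma~\ref{lem: testTheta}), applies Lemma~\ref{lem:weightforellirudin} ring by ring, and then sums the series $\sum_j 2^{-2j}\,\omega_1^{1-(l+k)}(w)/\omega_1^{1-(l+k)}(w_j)$ using $\omega_1(w_j)\ge\omega_1(w)-j\ln 2$ when $1-(l+k)>0$, and $\omega_1(w_j)\le\omega_1(w)$ otherwise. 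Either repair (the thinness argument, or the paper's ring decomposition) closes the gap; as written, your treatment of the complementary region does not prove the required bound.
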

\begin{proof}
We choose $\theta=\theta_w^{-k+1}$ where the function $\theta_w$ is defined in Lemma \ref{lem: testTheta}. In this case, the function $g$ is given as follows.

$$g:z\mapsto \frac{(\Im m (w))\omega^{1-(l+k)} (w)}{(z-\overline w)^3\theta(z)}$$ which is a well defined holomorphic function in the upper half plane. It remains to prove that the inequality $\|g\|_{L_{\omega^l}^1}\lesssim 1$ holds. When $\omega(w)\simeq 1,$ then $\theta\simeq 1$ and there is nothing to prove.  We will now use the two notations
$\omega_1(z)=\ln\left(e+\frac{1}{\Im mz}\right)$ and $\omega_2(z)=\ln(e+|z|)$ and use Lemma \ref{lem:weightforellirudin} for these weights. We distinguish two cases.
\begin{itemize}
    \item [(i)] Case $v<1/2$ and $\ln(v^{-1})\simeq \omega(w)\simeq \omega_1(w)$  We proceed as in \cite{BGS1}. We
cover the upper half plane by the union of $E_j'$s, with $E_0:=Q_w$ the Carleson square centered at $w$, and $E_j:=Q_{w_{j}}\setminus Q_{w_{j-1}}, j\geq 1.$ Here $w=u+iv$ and $w_j=u+i2^j v$.

We recall that for $z\in E_j,$ we have $|z-\overline   w|\leq 2\Im m(w_j)$. Hence using Lemma \ref{lem: testTheta}, we obtain
\begin{eqnarray*}
     \left|\theta(z)\right|&\gtrsim& \left(1-\ln 2+\ln_+\left(\frac{1}{|z-\overline w|} \right)\right)^{-k+1}\\
    &\gtrsim& \left(\ln\left(e+\frac 1{\Im m(w_j)} \right)\right)^{-k+1} =\omega_1^{-k+1}(w_j).
\end{eqnarray*}  
Lemma  \ref{lem:weightforellirudin} allows to get
$$\int|g(z)|\omega^l(z)dV(z)\lesssim\sum_j \frac{\omega_1^{1-(l+k)}(w)}{\omega_1^{1-(l+k)}(w_j)}2^{-2j}.$$
To conclude, in the case $1-(l+k)>0$, we use the fact that $\omega_1(w_j)\geq\omega_1(w)-j\ln 2$, and in the case $1-(l+k)\leq 0$, the fact that $\omega_1(w)\geq \omega_1(w_j)$.
\item [(ii)]\; Case $\omega(w)\simeq \ln(e+|w|)=\omega_2(w)$.\\ As $|\theta(z)|\gtrsim \left(\ln(e+|z|)\right)^{-k+1}=\omega_2^{-k+1}(z)$, the estimate follows as in the previous case.
\end{itemize}
\end{proof}

This factorization of atoms can be used to have a weak factorization of the space $A^1_{\omega^{l+k-1}}(\C_+)$ as well, as in \cite{BGS1}.
\medskip

\subsection{Hankel operators}\label{Hankel}

We only consider the unweighted situation in this section.

The Hankel operator $h_b$ is  defined  by $h_b(f):=P(b\overline f)$ when this makes sense. In particular, this is the case when $b$ is in $ \mathcal B$ and  $f$ in $A^1_\omega(\C_+)$. Let us recall how one proves that  $h_b$ is bounded on $A^2(\C_+)$ for $b\in \mathcal B$.  Assuming that $g\in A^2(\C_+)$ belongs to the dense subspace for which $\omega^2|g|^2$ is integrable, we have
$$\langle h_b(f),g\rangle =\langle b,fg\rangle, \; f\in A^2(\C_+)\cap A^1_\omega(\C_+). $$ 
As the product of two $A^2(\C_+)$ functions belongs to $A^1(\C_+)$,   this quantity is well defined when $g\in A^2(\mathbb C_+)$ and $b$ belongs to the dual space of $A^1(\C_+)$ namely for $b\in\mathcal B$, and we have the inequality
$$\|h_b(f)\|_{A^2}\leq \|b\|_{\mathcal B} \|f\|_{A^2}.$$
Since $h_b$ satisfies a continuity inequality on a dense set, it extends  to $A^2(\C_+)$ as a bounded antilinear operator of $ A^2(\C_+)$ into itself, which finally may be defined by the duality bracket
\begin{equation}\label{defHb}
  \langle h_b(f),g\rangle =\langle b,fg\rangle_{A^1,\mathcal B}   
\end{equation}   for any $g\in A^2(\C_+).$ This is the usual definition of the Hankel operator on $A^2(\C_+)$. \medskip

We would like to mimic  this proof in order to  consider Hankel operators from $A^1(\C_+)$ to $A^1(\C_+)$. 
The next lemma proves that there is an obstruction.
\begin{lemma}
Assume that $b$ is in the Bloch class, $f\in A^1(\C_+)$ and $h_b(f)$ is integrable. If moreover $bf$ is integrable, then $f$ satisfies the identity  $\langle f,b\rangle_{A^1,\mathcal B} =0$.    
\end{lemma}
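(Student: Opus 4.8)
The plan is to reduce the claim to Proposition \ref{zeroP}, which says that a function in $L^1(\C_+)$ whose Bergman projection lies in $L^1(\C_+)$ must have vanishing integral. The key observation is that the extra hypothesis ``$bf$ integrable'' is exactly what makes $b\overline f$ belong to $L^1(\C_+)$: indeed $|b\overline f|=|b|\,|f|=|bf|$, so $b\overline f\in L^1(\C_+)$, and therefore $h_b(f)=P(b\overline f)$ is literally the Bergman projection of an $L^1$ function (the defining integral even converges pointwise, since $|K(z,\zeta)|\le \pi^{-1}(\Im m\,z)^{-2}$ for fixed $z\in\C_+$).

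First I would apply Proposition \ref{zeroP} to the function $g:=b\overline f$. Since $g\in L^1(\C_+)$ and $Pg=h_b(f)\in L^1(\C_+)$ by assumption, the proposition yields at once
\[
\int_{\C_+} b(\zeta)\,\overline{f(\zeta)}\,dV(\zeta)=0 .
\]
Then I would identify this integral with the duality bracket: as in the computation leading to \eqref{defHb}, one has $\langle b,h\rangle_{A^1,\mathcal B}=\int_{\C_+}b\,\overline h\,dV$ whenever the right-hand side converges absolutely, and here absolute convergence is precisely guaranteed by $bf\in L^1$. Hence $\langle b,f\rangle_{A^1,\mathcal B}=0$, and taking complex conjugates gives $\langle f,b\rangle_{A^1,\mathcal B}=\int_{\C_+}f\,\overline b\,dV=0$, which is the desired identity.

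The only genuinely delicate step is this last identification. A priori the bracket $\langle f,b\rangle_{A^1,\mathcal B}$ is the abstract $A^1$--$\mathcal B$ duality pairing, which is sensitive to the constant ambiguity of Bloch functions and need not be represented by a convergent integral for a general $f\in A^1(\C_+)$. The hypothesis $bf\in L^1$ is exactly what resolves both difficulties at once: it makes the naive integral $\int_{\C_+}f\overline b\,dV$ absolutely convergent and removes the modulo-constant obstruction highlighted in the introduction, so that the pairing collapses to that integral and Proposition \ref{zeroP} applies. Everything else is routine.
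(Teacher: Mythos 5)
Your proposal is correct and takes essentially the same route as the paper, whose entire proof is ``Just use Proposition~\ref{zeroP}'': the hypothesis that $bf$ is integrable puts $b\overline f$ in $L^1(\C_+)$, and since $h_b(f)=P(b\overline f)\in L^1(\C_+)$, Proposition~\ref{zeroP} forces $\int_{\C_+}b\overline f\,dV=0$, which is the stated identity. Your extra paragraph identifying the abstract $A^1$--$\mathcal B$ pairing with the absolutely convergent integral only spells out what the paper leaves implicit (and the constant ambiguity is in fact harmless for any $f\in A^1(\C_+)$, since such $f$ automatically has vanishing integral, again by Proposition~\ref{zeroP}).
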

\begin{proof}
Just use Proposition \ref{zeroP}. It is in particular the case if $b$ is bounded or if $f$ belongs to $A^1_{\omega}(\C_+)$. 
\end{proof}


 Hence, if one wants to define Hankel operators $h_b$ as an operator acting from $A^1(\C_+)$ into itself, it means that we restrict ourselves to the subspace of $A^1(\C_+)$ given by  functions $f$ in $A^1(\C_+)$ with $\langle b,f\rangle_{A^1,\mathcal B} =0$. We proceed as we did before and consider first  the operator $h_b^{\rm {mod}}$, which is defined by using the operator $P^{\rm {mod}}$ in place of $P$. There is no restriction for this modified operator, so that we can use the density of  $A^1(\C_+)\cap A^2(\C_+)$. We have, by Fubini's Theorem,
 \begin{equation}\label {mod1}
     \langle h_b^{\rm {mod}} f, g \rangle=\langle b, (g-g(i)) f\rangle,\; f\in A^1(\C_+)\cap A^2(\C_+)
 \end{equation} when $g-g(i)$ belongs to $A^2(\C_+)$.
We now prove the following result.
\begin{proposition}
    Assume that $b\in\mathcal B_\omega$. The following holds:
    \begin{enumerate}
    \item
         The operator $h_b^{\rm {mod}}$ which is defined by 
    \begin{equation}\label {mod2}
        \langle h_b^{\rm {mod}} f, g \rangle=\langle b, (g-g(i)) f\rangle,\; f\in A^1(\C_+)\cap A^2(\C_+)
    \end{equation} 
   extends continuously from $A^1(\C_+)$ into $A^1(\C_+)$.   
  \item  For any $\varepsilon>0$, the operator $h_b$ extends continuously to a bounded operator from $A^1(\C_+)$ into $A^1_{\omega^{-1-\varepsilon}}(\C_+)$.
        \item This extension of $h_b$ maps continuously  $A^1_b(\C_+)$ into $A^1(\C_+)$.
         
    \end{enumerate}
    In the three cases, the operator norm of $h_b$ has norm bounded by $c\|b\|_{\dot {\mathcal B_\omega}}$.  
    \end{proposition}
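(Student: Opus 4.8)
The plan is to establish the three assertions together, deducing (2) and (3) from the bound for the modified operator in (1). The organizing identity is the decomposition
$$h_b f = h_b^{\rm mod} f + K(\cdot,i)\,\langle b,f\rangle_{A^1,\mathcal B},$$
which comes from $h_b f=P(b\overline f)$, $h_b^{\rm mod}f=P^{\rm mod}(b\overline f)$ and the fact that $P-P^{\rm mod}$ has the ``rank one'' kernel $K(z,i)$, so that $(P-P^{\rm mod})(b\overline f)(z)=K(z,i)\int_{\C_+} b\overline f\,dV$. Here the bracket $\langle b,f\rangle_{A^1,\mathcal B}$ is the duality pairing, well defined for $f\in A^1(\C_+)$ and $b\in\mathcal B_\omega\subset\mathcal B$. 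Since $K(\cdot,i)$ is a fixed holomorphic function, everything reduces to controlling $h_b^{\rm mod}$ and to locating $K(\cdot,i)$ in the target spaces.

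For (1) I would work from the weak identity \eqref{mod2}, namely $\langle h_b^{\rm mod} f,g\rangle=\langle b,(g-g(i))f\rangle$ for $f\in A^1(\C_+)\cap A^2(\C_+)$ and $g$ with $g-g(i)\in A^2(\C_+)$, and bound the right-hand side by chaining the two main results of Section 3. First, by Corollary \ref{emb-weak} with $l=k=0$, the pointwise product $(g-g(i))f$ lies in $A^1_{\omega^{-1}}(\C_+)$, with $\|(g-g(i))f\|_{A^1_{\omega^{-1}}}\lesssim \|f\|_{A^1}\|g\|_{\dot{\mathcal B}}$, using that $g-g(i)$ vanishes at $i$ and has the same Bloch semi-norm as $g$. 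Second, by Theorem \ref{thm:dualweighted} with $k=1$, the space $\mathcal B_\omega(\C_+)$ is exactly the dual of $A^1_{\omega^{-1}}(\C_+)$ modulo constants, so the pairing of $b$ against $(g-g(i))f$ is dominated by $\|b\|_{\dot{\mathcal B}_\omega}\|(g-g(i))f\|_{A^1_{\omega^{-1}}}$; the right-hand side of \eqref{mod2} must indeed be read in this duality (derivative) form, and it converges precisely because $\Im m(z)\,\omega(z)|b'(z)|$ is bounded while $(g-g(i))f\in A^1_{\omega^{-1}}$. Combining yields $|\langle h_b^{\rm mod}f,g\rangle|\lesssim \|b\|_{\dot{\mathcal B}_\omega}\|f\|_{A^1}\|g\|_{\dot{\mathcal B}}$, uniformly in $g$. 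Since $\dot{\mathcal B}=(A^1)^*$ by Theorem \ref{thm:dualweighted} with $k=0$, this identifies $h_b^{\rm mod}f$ as an element of $A^1(\C_+)$ with $\|h_b^{\rm mod}f\|_{A^1}\lesssim \|b\|_{\dot{\mathcal B}_\omega}\|f\|_{A^1}$, and one extends by density of $A^1\cap A^2$ in $A^1$.

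With (1) available, (3) is immediate: for $f\in A^1_b(\C_+)$ one has $\langle b,f\rangle_{A^1,\mathcal B}=0$, the correction term disappears, and $h_b f=h_b^{\rm mod}f\in A^1(\C_+)$ with the stated bound. For (2) it remains to place the correction in the target space. A direct computation shows $\int_{\C_+}|z+i|^{-2}\omega^{-1-\varepsilon}(z)\,dV(z)<\infty$ for every $\varepsilon>0$ (the logarithmic gain $\omega^{-1-\varepsilon}$ renders the otherwise non-integrable $|z+i|^{-2}$ summable both near the boundary and at infinity), whereas $K(\cdot,i)\notin A^1(\C_+)$. Thus $h_b f$ is the sum of an $A^1(\C_+)$ function and the multiple $K(\cdot,i)\langle b,f\rangle_{A^1,\mathcal B}$, whose coefficient is bounded by $|\langle b,f\rangle_{A^1,\mathcal B}|\le \|b\|_{\dot{\mathcal B}}\|f\|_{A^1}\lesssim \|b\|_{\dot{\mathcal B}_\omega}\|f\|_{A^1}$, giving the bound into $A^1_{\omega^{-1-\varepsilon}}(\C_+)$; this also records why $h_b f\notin L^1(\C_+)$ once $\langle b,f\rangle_{A^1,\mathcal B}\neq 0$, consistently with Theorem \ref{hankelexp}.

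The main obstacle is that $h_b^{\rm mod}f$ cannot be defined by an absolutely convergent integral: $b$ grows like $\ln(\omega)$ (Lemma \ref{lem:pointwisebloch} with $k=1$), so $b\overline f$ need not belong to $L^1(\C_+)$, and the modified kernel $K_{\rm mod}(z,\zeta)$ does not decay as $\zeta\to\infty$. Consequently the operator has to be treated entirely through the weak identity \eqref{mod1}, and the delicate point is to ensure that a bound over test functions $g$ truly delivers an $A^1(\C_+)$ function and not merely an element of the bidual; this is exactly where the density of $A^1\cap A^2$ and the precise duality $(A^1)^*=\mathcal B$ enter, together with the verification that the pairing in \eqref{mod2} is the very $A^1$--$\mathcal B$ duality pairing of Theorem \ref{thm:dualweighted}. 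Once this is secured, tracking that all constants depend only on $\|b\|_{\dot{\mathcal B}_\omega}$ (and, in (2), on $\varepsilon$) is routine.
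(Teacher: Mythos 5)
Your proposal is correct and follows essentially the same route as the paper: the same decomposition $h_b f = h_b^{\rm mod}f + K(\cdot,i)\langle b,f\rangle$, the same key estimate obtained by combining the product result (Corollary \ref{emb-weak} applied to $(g-g(i))f$) with the duality $\left(A^1_{\omega^{-1}}\right)^*=\mathcal{B}_{\omega}$ and $\left(A^1\right)^*=\mathcal{B}$, the same verification that $z\mapsto(z+i)^{-2}$ lies in $A^1_{\omega^{-1-\varepsilon}}$ for item (2), and the same observation that the correction term vanishes on $A^1_b$ for item (3). The delicate point you flag at the end (passing from the weak bound over test functions $g$ to genuine $A^1$ membership, via density of $A^1\cap A^2$ and weak approximation of Bloch functions by $A^2$ functions) is exactly the point the paper handles with its explicit approximants $g_\varepsilon(z)=g(z+i\varepsilon)/(1-i\varepsilon z)^N$.
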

    \begin{proof}
We first prove the continuity of  the operator defined by \eqref{mod2}.  By the multiplier theorem, for any $g\in \mathcal B$ and $f\in A^1(\C_+)$, $(g-g(i))f$ belongs to $A^1_{\omega^{-1}}$, with the inequality
$$\|(g-g(i))f\|_{A^1_{\omega^{-1}}}\leq C \|f\|_{A^1} |\|g\|_{\dot{\mathcal B}}.$$
We emphasize here that the estimate does not depend on the choice of $g$ in the class $\mathcal B$ since it is written in terms of the difference $g-g(i)$.\\
Hence, for any $b\in\mathcal B_\omega$, any $f\in A^1(\C_+)$, any $g\in \mathcal B$
$$|\langle b, (g-g(i)) f\rangle_{A^1_{\omega^{-1}},\mathcal B_\omega}|\le \Vert b\Vert_{\mathcal B_\omega} \|f\|_{A^1} |\|g\|_{\dot{\mathcal B}}.$$ 
Now, we want to prove that the operator we have defined by \eqref{mod2} is an extension of $h_b$ as defined in \eqref{defHb}. Remark that, for $b\in \mathcal B$, $f\in A^1(\C_+)\cap A^2(\C_+)$ and $g\in A^2(\C^+),$ then \eqref{mod1} may be written as well as 
$$\langle h_b^{\mod} (f), g \rangle=\langle h_b (f), g\rangle -g(i)\langle b, f\rangle,$$
the two terms making sense separately. It remains to prove that the function $g\in \mathcal B$ can be weakly approached by functions in $A^2(\C_+)$. Just take  
$$g_\varepsilon(z):= \frac{g(z+i\varepsilon)}{(1-i\varepsilon z)^N},\; \varepsilon>0.$$
We have proved the first item.


Next, we prove the existence of a constant $C$ such that, whenever $f\in A^1(\C_+)\cap A^2(\C_+)$,
$$\|h_b f\|_{A^1_{\omega^{-1-\varepsilon}}}\leq C \|f\|_{A^1}.$$
For $f\in A^1(\C_+)\cap A^2(\C_+)$, we can write $h_b(f)$ in terms of $h_b^{\rm {mod}} f$ 
$$h_b^{\rm {mod}} f=h_b(f)-\frac 1{(z+i)^2}\langle b,f\rangle.$$ As $h_b^{\rm {mod}} f$ belongs to $A^1(\C_+)$, it is sufficient to prove that the second term is in $A^1_{\omega^{-1-\varepsilon}}(\C_+)$. Hence, we only need to prove that $z\mapsto (z+i)^{-2}$ is in $A^1_{\omega^{-1-\varepsilon}}(\C_+)$. This last assertion follows from the integrability at infinity of $r\mapsto \frac 1{r\ln (1+r)^{1+\varepsilon}}$ for any $\varepsilon>0$. The a priori estimate follows and allows to extend continuously $h_b$ as an operator bounded from $A^1(\C_+)$ into $A^1_{\omega^{-1-\varepsilon}}(\C_+)$

 Finally, if $f$ is in $A_b^1(\C_+),$ then $ h_b f$ and $ h_b^{\rm {mod}} f$ coincide, which allows to conclude for the third statement of the proposition. \\

\end{proof}

One has the reverse property. Namely
\begin{proposition}
    Assume that $b\in\mathcal B$ and $h_b^{\rm {mod}}$ maps continuously $A^1(\C_+)$ into $A^1(\C_+)$. Then $b$ is in $\mathcal B_\omega$ and 
    $$\|b\|_{\dot{\mathcal B}_\omega}\leq C(\|h_b^{\rm {mod}}\|_{A^1\mapsto A^1}+ \|b\|_{\dot{\mathcal B}}).
$$
Moreover, it is sufficient that $h_b$ maps continuously $A_b^1(\C_+)$ into $A^1(\C_+)$ to imply that $b\in \mathcal B_\omega$.
  \end{proposition}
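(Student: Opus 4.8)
The plan is to read off the pointwise quantity $(\Im m z_0)\omega(z_0)|b'(z_0)|$ from the boundedness of $h_b^{\rm mod}$ by testing against the factorized atoms of $A^1_{\omega^{-1}}(\C_+)$ produced in Proposition \ref{prop:factor}. Set $M:=\|h_b^{\rm mod}\|_{A^1\mapsto A^1}$. Since $(A^1)^*=\mathcal B$ under the pairing $\langle\cdot,\cdot\rangle_*$, the boundedness of $h_b^{\rm mod}$ reads, for $f$ in the dense class $A^1(\C_+)\cap A^2(\C_+)$ and $g\in\mathcal B$,
$$|\langle h_b^{\rm mod}f,g\rangle|\le M\|f\|_{A^1}\|g\|_{\dot{\mathcal B}},$$
and by the defining identity \eqref{mod2} the left-hand side equals the pairing of $b$ with $(g-g(i))f$. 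The whole point is to choose $f$ and $g$ so that $(g-g(i))f$ is, up to one harmless term, the atom
$$a_{z_0}(z):=\frac{(\Im m z_0)\,\omega(z_0)}{(z-\overline{z_0})^3},\qquad \|a_{z_0}\|_{A^1_{\omega^{-1}}}\simeq1,$$
where the normalization is \eqref{eq:atom} with $l=k=0$, via Lemma \ref{lem:weightforellirudin}.

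First I would record the reproducing identity that converts this pairing into a pointwise derivative: for every $z_0\in\C_+$,
$$\langle b,a_{z_0}\rangle=\int_{\C_+}a_{z_0}(z)\,\overline{b'(z)}\,\Im m z\,dV(z)=c_1^{-1}(\Im m z_0)\,\omega(z_0)\,\overline{b'(z_0)},$$
which is the conjugate of the reproducing formula for the weighted kernel $K_1(z,w)=c_1(z-\overline w)^{-3}$. It is immediate when $b'\in A^2_1(\C_+)$; for $b$ merely Bloch one justifies it by replacing $b$ with $b_s:=b\circ\phi_s$, $\phi_s$ a holomorphic self-map of $\C_+$ with relatively compact image (so $\|b_s\|_{\dot{\mathcal B}}\le\|b\|_{\dot{\mathcal B}}$ by Schwarz--Pick and $b_s'\in A^2_1$), applying the exact formula and letting $s\to1$. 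The uniform bound $|b_s'(z)|\le\|b\|_{\dot{\mathcal B}}/\Im m z$ together with $\int_{\C_+}|a_{z_0}|\,dV<\infty$ lets dominated convergence pass to the limit. Consequently $\sup_{z_0}|\langle b,a_{z_0}\rangle|\simeq\|b\|_{\dot{\mathcal B}_\omega}$, and it suffices to bound this supremum by $M+\|b\|_{\dot{\mathcal B}}$.

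Now I fix $z_0$ and factor $a_{z_0}=f_{z_0}\,\theta_{z_0}$ by Proposition \ref{prop:factor} with $l=k=0$: here $f_{z_0}\in A^1$, $\theta_{z_0}$ is the function $\theta_w$ of Lemma \ref{lem: testTheta} with $w=z_0$ (so $k=0<1$), and $\|f_{z_0}\|_{A^1}\|\theta_{z_0}\|_{\mathcal B}\lesssim1$; one checks $f_{z_0}\in A^1\cap A^2$ so that the dense-class identities apply. Writing $a_{z_0}=(\theta_{z_0}-\theta_{z_0}(i))f_{z_0}+\theta_{z_0}(i)f_{z_0}$ and pairing with $b$ gives
$$\langle b,a_{z_0}\rangle=\langle h_b^{\rm mod}f_{z_0},\theta_{z_0}\rangle+\theta_{z_0}(i)\,\langle b,f_{z_0}\rangle,$$
the first term by \eqref{mod2}. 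That term is $\le M\|f_{z_0}\|_{A^1}\|\theta_{z_0}\|_{\dot{\mathcal B}}\lesssim M$; in the second, $|\theta_{z_0}(i)|\simeq1$ by Lemma \ref{lem: testTheta}(ii) (since $|i-\overline{z_0}|\ge1+\Im m z_0\ge1$), while $|\langle b,f_{z_0}\rangle|\le\|f_{z_0}\|_{A^1}\|b\|_{\dot{\mathcal B}}\lesssim\|b\|_{\dot{\mathcal B}}$. Hence $(\Im m z_0)\omega(z_0)|b'(z_0)|\lesssim M+\|b\|_{\dot{\mathcal B}}$ uniformly, which is the stated inequality after taking the supremum.

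For the final assertion I would reduce the hypothesis ``$h_b:A^1_b\to A^1$ bounded'' to ``$h_b^{\rm mod}:A^1\to A^1$ bounded'' and invoke the part just proved. Assuming $b$ nonconstant, pick a fixed $\psi\in A^1\cap A^2$ with $\langle b,\psi\rangle=1$ (possible by nondegeneracy of $\langle\cdot,b\rangle_*$) and $h_b^{\rm mod}\psi\in A^1$. For $f\in A^1$ write $f=\tilde f+\langle b,f\rangle\psi$ with $\tilde f\in A^1_b$; since $h_b$ and $h_b^{\rm mod}$ agree on $A^1_b$, $h_b^{\rm mod}f=h_b\tilde f+\langle b,f\rangle\,h_b^{\rm mod}\psi$ and
$$\|h_b^{\rm mod}f\|_{A^1}\lesssim\big(\|h_b\|_{A^1_b\mapsto A^1}+\|b\|_{\dot{\mathcal B}}\,\|h_b^{\rm mod}\psi\|_{A^1}\big)\|f\|_{A^1}.$$
Thus $h_b^{\rm mod}$ is bounded on $A^1$ and the main part yields $b\in\mathcal B_\omega$. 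The chief obstacle throughout is not the algebra but the justification of the pairing identities for $b$ only in $\mathcal B$: the reproducing formula above and the extension of \eqref{mod2} to $g=\theta_{z_0}\notin A^2$ (through the approximants $g_\varepsilon$ of item~(1) of the preceding proposition) must be carried out keeping careful track of which pairing converges absolutely, since for $b\notin\mathcal B_\omega$ the naive integral $\int_{\C_+} b\,\overline{a_{z_0}}\,dV$ need not converge.
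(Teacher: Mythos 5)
Your proof of the first implication is, in substance, the paper's own: the same atom \eqref{eq:atom} with $l=k=0$, the same factorization through Proposition \ref{prop:factor} and the test functions $\theta_w$ of Lemma \ref{lem: testTheta}, the same identity $\langle b,a_{z_0}\rangle=\langle h_b^{\rm mod}f_{z_0},\theta_{z_0}\rangle+\theta_{z_0}(i)\langle b,f_{z_0}\rangle$, and the same two bounds. If anything, your regularization $b\circ\phi_s$ to justify the derivative--reproducing identity is more careful than the paper, which simply invokes the Cauchy formula after noting that $z\mapsto b(z)(\zeta-\overline z)^{-3}$ is integrable (Bloch functions grow at most like $\omega$, by Lemma \ref{lem:pointwisebloch} with $k=0$). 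So the first statement and the norm inequality are in order.

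The gap is in the second statement. You ``pick $\psi\in A^1\cap A^2$ with $\langle b,\psi\rangle=1$ \emph{and} $h_b^{\rm mod}\psi\in A^1$'', justifying only the first property (by nondegeneracy of the pairing). The second property is precisely what you are not entitled to assume: under the hypothesis of this part you only know that $h_b$ is bounded on $A^1_b$, and $\psi\notin A^1_b$ by design, so for a generic $\psi\in A^1\cap A^2$ and $b\in\mathcal B\setminus\mathcal B_\omega$ there is no reason for $h_b^{\rm mod}\psi$ to be integrable (even $b\overline\psi\in L^1$ can fail, since $|b|$ may grow like $\omega$). The existence of such a $\psi$ must therefore be proved, not postulated, and this is where the paper does actual work: it takes the explicit function $u(z)=\frac 1{\pi b'(\zeta_0)}(z-\overline{\zeta_0})^{-3}$ with $|\zeta_0-i|<1/2$ and $b'(\zeta_0)\neq 0$ ($b$ nonconstant), for which $\langle b,u\rangle=1$ by the same derivative--reproducing formula, and verifies $h_b^{\rm mod}u\in A^1$ using only $b\in\mathcal B$: the growth bound $|b(\zeta)|\lesssim\|b\|_{\mathcal B}\,\omega(\zeta)$, the kernel bound $\int_{\C_+}|K_{\rm mod}(z,\zeta)|\,dV(z)\lesssim\omega(\zeta)$ from Lemma \ref{ineq-fund} and Remark \ref{LemmaKmod}, and the integrability of $\omega^2 u$, which holds because $u$ decays cubically (Lemma \ref{lem:weightforellirudin}). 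Inserting this construction --- or, equivalently, requiring your $\psi$ to lie in $A^1_{\omega^2}\cap A^2$, which suffices for the same reason --- closes the gap; the rest of your reduction then coincides with the paper's argument.
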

\begin{proof}
    We want to prove that there exists some uniform constant $C$ such that 
    $$|b'(\zeta)| \leq C (\omega(\zeta) \Im m\,\zeta )^{-1}).$$
 From the Cauchy formula, as $z\mapsto  \frac {b(z)}{(\zeta-\overline z)^3}$ is integrable on $\C_+$, we have  $$|b'(\zeta)| =\frac {1 }{\pi} \left|\int_{\C_+} \frac {b(z)}{(\zeta-\overline z)^3} dV(z)\right|.$$
  We set
    \begin{equation*}\label{atom}
    f(z)=\frac{\Im m (\zeta)\omega (\zeta)}{\pi (z-\overline \zeta)^3},
\end{equation*} so that $\langle b,f\rangle=  \Im m (\zeta) \omega(\zeta) b'(\zeta).$ Our aim is to bound this quantity uniformly.
By Proposition \ref{prop:factor}, we can write $$f=\frac{f}{\theta}\times \theta,$$ with
  \begin{equation}\label{sim}
      \left\|\frac{f}{\theta}\right\|_{A^1}\simeq 1,\qquad \qquad \|\theta\|_{\mathcal B}\simeq 1.
  \end{equation}
   Observe that 
  \begin{equation*}
      \left|\langle b, \frac{f}{\theta}\rangle\right|=\left|\int_{\C_+}  \Im m(\zeta) b'(\zeta)\overline{\left(\frac{f}{\theta}\right)}(\zeta) \;dV(\zeta)\right|\lesssim  \|b\|_{\mathcal B}.
  \end{equation*} 
  Hence, we can write
  $$\langle b,f\rangle =\langle h_b^{\rm mod}\frac f\theta,\theta\rangle+\theta(i)\langle b,\frac f\theta\rangle.$$
  From this equality and our assumption we obtain
  $$|\langle b,f\rangle |\lesssim \|h_b^{\rm {mod}}\|_{A^1\mapsto A^1}+\Vert b\Vert_{\dot{\mathcal B}}.$$
  It gives the first result.

  For the second one, we just assume $h_b$ bounded from $A^1_b(\C_+)$ into $A^1(\C_+)$. We will give a bound of $\|h_b^{\rm {mod}}\|_{A^1\mapsto A^1}$ in terms of $\|h_b \|_{A_b^1\mapsto A^1}$. For this we write $f\in A^1(\C_+)$ as a sum $f=g+\langle b, f\rangle u, $ with $u$ some given function such that $\langle b, u\rangle=1$ and that depends only on $b$. We will show later on that such a function exists. For $g$, we use that the two operators coincide, so that
  \begin{eqnarray*}
   \|h_b^{\rm {mod}}(f)\|_{A^1}&\leq & \|h_b\|_{A_b^1\mapsto A^1}\|g\|_{A^1}+ \Vert b\Vert_{\dot{\mathcal B}}\|f\|_{A^1}\|h_b^{\rm {mod}}(u)\|_{A^1}\\
   &\leq & \|h_b\|_{A_b^1\mapsto A^1}\|f\|_{A^1}+\Vert b\Vert_{\dot{\mathcal B}}(\|u\|_{A^1}+ \|h_b^{\rm {mod}}(u)\|_{A^1})\|f\|_{A^1}.
  \end{eqnarray*}
  So there exists two constants $C, C'$ depending only on $b$ such that $$\|h_b^{\rm {mod}}\|_{A^1\mapsto A^1}\leq C \|h_b\|_{A_b^1\mapsto A^1} +C'.$$
  It remains to prove that such a function $u$ exists, assuming that $b$ is not constant.
  We choose $\zeta_0$ such that $|\zeta_0-i|<1/2$ and $b'(\zeta_0)\neq 0$ and take
  $$u(z)=\frac 1{\pi b'(\zeta_0)} (z-\overline{\zeta_0})^{-3}$$ so that $\langle b,u\rangle= 1.$
  Lemma \ref{ineq-fund} and Lemma \ref{lem:weightforellirudin} allow to see that $u$ and $h_b^{\rm {mod}} u$ are integrable. Indeed, for $h_b^{\rm {mod}} u$, we use the fact that $b$ is bounded by $\Vert b\Vert_{\mathcal B_\omega}$ and that the integral of $K^{mod}$ is also bounded by $C\omega$. Then, the result follows from the integrability of $\omega^2 u$.
  
  \end{proof}
  
  \smallskip
  
We have proved Theorem \ref{hankelexp}.   This method does not give the dependence in $b$ of the norm of the operator on $A_b^1(\C_+)$.

\bibliographystyle{plain}

\end{document}